\documentclass[a4paper,twoside,10pt]{article}

\usepackage{blindtext}
\usepackage[a4paper,top=3cm,bottom=3cm,left=3cm,right=3cm,bindingoffset=5mm]{geometry}
\usepackage{amsmath,amssymb}
\usepackage{amsthm} 
\usepackage{graphicx} 
\usepackage{subcaption}
\usepackage{stmaryrd} 
\usepackage{cases} 
\usepackage{bm}
\usepackage{color}
\usepackage{comment}
\usepackage{dirtytalk}
\usepackage{multirow} 
\usepackage{booktabs} 
\usepackage{hyperref}
\usepackage{authblk}
\numberwithin{equation}{section}
\newtheorem{theorem}{Theorem}[section]
\newtheorem{proposition}{Proposition}[section]
\newtheorem{corollary}{Corollary}[theorem]
\newtheorem{lemma}[theorem]{Lemma}
\theoremstyle{remark}
\newtheorem{remark}[theorem]{Remark}
%
\newcommand{\numberset}{\mathbb}
\newcommand{\N}{\numberset{N}}
\newcommand{\Z}{\numberset{Z}}

\newcommand{\R}{\numberset{R}}
\newcommand{\C}{\numberset{C}}
\newcommand{\Pol}{\numberset{P}}

\newcommand{\abs}[1]{\left\lvert#1\right\rvert} 
\newcommand{\norm}[1]{\left\lVert#1\right\rVert} 
\newcommand{\dnorm}[1]{{\left\vert\kern-0.25ex\left\vert\kern-0.25ex\left\vert #1 \right\vert\kern-0.25ex\right\vert\kern-0.25ex\right\vert}} 

\usepackage{stackengine}
\stackMath
\newcommand\tenq[2][1]{%
 \def\useanchorwidth{T}%
  \ifnum#1>1%
    \stackunder[0pt]{\tenq[\numexpr#1-1\relax]{#2}}{\scriptscriptstyle\sim}%
  \else%
    \stackunder[1pt]{#2}{\scriptscriptstyle\sim}%
  \fi%
}

\def\aishK{\alpha_{\star,h_K}}    
\def\asihK{\alpha^{\star}_{h_K}}  

\def\asih{\alpha^{\star}_{h}}     
\def\aish{\alpha_{\star,h}}       

\def\asuph{\alpha^\star_h}        
\def\asdownh{\alpha_{\star,h}}    


\title{Volumetric locking-free Mixed Virtual Element Methods for Contact Problems}
\date{}
\author[1,2]{{{C. Lovadina}}\thanks{carlo.lovadina@unimi.it}}
\author[1]{{{L. Molinari}},\thanks{loris.molinari@unimi.it}}

\affil[1]{Dipartimento di Matematica ``F. Enriques'', Universit\`a degli Studi di Milano, Via Cesare Saldini 50 - 20133 Milano, Italy}
\affil[2]{IMATI-CNR, Via Adolfo Ferrata 5 - 27100 Pavia, Italy}
\begin{document}
\maketitle
\begin{abstract}
We consider the approximation of the 2D frictionless contact problem in elasticity using the Virtual Element Methods (VEMs). To overcome the volumetric locking phenomenon in the nearly incompressible case, we adopt a mixed displacement/pressure ($u/p$) variational formulation, where pressure is introduced as an independent unknown. We present the VEM discretization and develop a general error analysis, keeping explicit track of the constants involved in the error estimates, thus allowing to consider meshes with \say{small edges}.
As examples, we consider two possible VEM schemes: a first-order scheme and a second-order scheme. The numerical results confirm the theoretical predictions, specifically both schemes show: 1) robustness with respect to the volumetric parameter $\lambda$, thus preventing the occurrence of the volumetric locking phenomenon; 2) good behavior even in the presence of \say{small edges}; 3) achievement of the expected theoretical convergence rates.
\end{abstract}
\section{Introduction}
Virtual Element Methods (VEMs) can be viewed as an evolution of both the Finite Element Methods (FEMs) and the Mimetic Finite Differences (MFDs). They share the same variational structure as FEMs while offering exceptional flexibility in handling polygonal and polyhedral meshes.

Since their introduction in 2013 in the seminal paper \cite{volley}, VEMs have garnered significant and growing interest within both the mathematical and engineering communities. Very substantial advances have been made in both the theoretical aspects and the practical applications of this approach. Currently, the VEM literature is so vast that it is beyond the scope of this introduction to review it effectively. For a recent (though non-exhaustive) overview of the theory and applications of VEMs, we refer the interested reader to the monograph \cite{sema-simai}.

In this paper we consider the contact problem in elasticity and its approximation (see \cite{sofonea2005analysis} and \cite{ACTA-contact}, for instance), using the Virtual Element Methods. 
From a mathematical perspective, this problem leads to a system of variational inequalities. Fundamental contributions to the Galerkin approximation of such inequalities in abstract settings can be found, for instance, in \cite{Falk1974}, \cite{BrezziHagerRaviart1977} and \cite{BrezziHagerRaviart1978}.  

VEMs for contact Mechanics have been employed for the first time in \cite{Wriggers_Rust_Reddy}, where the authors exploited the VEM ease of treating meshes (in particular, non-matching grids) to design a simple and efficient displacement-based numerical scheme, where a node-to-node approach is used to treat the contact region. After that pioneering work, several contributions have been developed extending the ideas in \cite{Wriggers_Rust_Reddy}. For example, in \cite{Wriggers2019} frictional contact and large deformations have been considered, in \cite{Aldakheel2020} curved virtual elements are proposed for 2D contact problems. See also the recent book \cite{Wriggers2023} for an account of the VEM literature about this topic. From a theoretical viewpoint, the paper \cite{Wang2022} covers the convergence analysis of the basic displacement-based VEM scheme applied to the infinitesimal elasticity frictionless contact problem (see also the paper \cite{XIAO2023} and the references therein, for more general and complex frameworks in Contact Mechanics). However, these latter contributions, although very interesting, do not cover the case of nearly incompressible materials.    
Since we are interested in developing accurate numerical schemes also for that regime, we employ a suitable mixed formulation, in which a pressure-like quantity explicitly enters as an independent unknown (see \cite{Boffi_Brezzi_Fortin}, for instance). 
As a first research step, we consider the simplest case, i.e. we assume that: 1) the problem is 2D; 2) in each body the constitutive law is homogeneous and isotropic; 3) the elastic bodies undergo small displacements and small deformations; 4) no friction is involved in the contact. 
In such a situation, we develop a general analysis, following the lines of \cite{Belgacem_Renard_Slimane}, and we present a couple of possible Virtual Element schemes which fit the abstract framework: one is based on first order consistency and approximation properties, while the other is designed in order to satisfy second order consistency and approximation properties. The analysis and the numerical results highlight the following interesting features:

\begin{itemize}

\item The methods are robust with respect to the choice of the Lam\'e's first parameter, i.e. no volumetric locking occurs.  

\item The methods show a good behaviour even in presence of ``small edges'' (for example, when they arise in non matching grids as a consequence of the contact). From a theoretical viewpoint, ``small edges'' introduce logarithmic degeneracy terms in the stability constants and in the approximation estimates. 
However, due to the slow growth of those terms, the degeneracy is not observed from a practical viewpoint    

\item The theoretical convergence rates are the expected ones for this kind of problems, under usual assumptions. In particular, we remark that for the second order scheme, which potentially could show a quadratic convergence rate (in fact, this is the behaviour when applied to linear elasticity problems), there is a barrier of $5/2$ in the converge order with respect to the meshsize, even for regular analytical solutions. This loss of $1/2$ points of convergence is typical in contact problems (see \cite{Belhachmi_BenBelgacem}, for example) and it is due to the discretization of the convex set of admissible displacements, which introduces a further consistency term.   

\end{itemize}

Of course, VEMs are not the only approach, successfully used to numerically approximate contact problems in elasticity. Among the others, the mortar method (see \cite{BELGACEM1998},  \cite{temizer2012mixed}, \cite{wohlmuth2000}, \cite{puso2004mortar}, for instance) is one of the most popular in the framework of the Finite Element Methods, to handle the case of non macthing grids. Another recent approach takes advantage of the Nitsche's method, see e.g. \cite{chouly2013} and \cite{Stenberg2020}. Furthermore, also Isogeometric Analysis has been extensively used, see e.g. \cite{DeLorenzis2014}, \cite{DeLorenzis2015} and \cite{Temizer2011}. Regarding polygonal/polyhedral schemes, we mention the hybrid high order methods (HHO), see \cite{HHO-book-2}, \cite{di_pietro2025hho}, \cite{chouly2020hho_nitsche}, for example.   

The paper is organized as follows.
Section \ref{sec:mixed-contact} introduces the static elastic contact problem in the infinitesimal displacement and deformation regime. In particular, a suitable displacement/pressure formulation is considered, leading to a system of variational inequalities. Compared to the approach detailed in \cite{Belgacem_Renard_Slimane}, we do not need to rely on any augmented formulation to deal with the case where only Dirichlet and contact boundary conditions occur (i.e. no Neumann boundary condition enters into play).

Section \ref{sec:VEM_contact} is about the Virtual Element discretization of the variational system presented in Section \ref{sec:mixed-contact}. We detail the technique used for managing possible non-matching grids (i.e. the node insertion strategy introduced in the paper \cite{Wriggers_Rust_Reddy}). Then we define the VEM discrete problem, for which we develop a general convergence result, under suitable standard hypotheses. This analysis may be considered as a VEM adaptation of the study presented in \cite{Belgacem_Renard_Slimane} for the Signorini problem. However, here we keep precise track of the stability constants entering into play. Furthermore, we allow such quantities to be possibly dependent on the meshsize: this aspect is indeed important when the meshes exhibit the presence of ``small edges'', an occurrence which must be considered if the node insertion technique is employed. 

In Section \ref{sec:VEMexamples} we provide a couple of examples which fit the framework of Section \ref{sec:VEM_contact}: a first order and a second order VEM schemes; this latter method takes advantage of the virtual element spaces developed in \cite{BLV:2017}. 

In Section \ref{sec:quadratic-conv} a complete analysis is developed for the second order method, and in particular we prove suitable estimates for the stability constants. The first order scheme can be treated using essentially the same arguments. 
We observe that part of the analysis could be performed using finer tools than the ones here employed for the treatment of the ``small edges'' case. For example, one could take advantage of the techniques presented in \cite{BLR:2017} or \cite{brenner-sung:2018} to avoid logarithmic terms arising in some estimates from the occurrence of ``small edges''. However, a logarithmic term would still remain, stemming from the \emph{inf-sup} constant estimate. Therefore, we decide to follow more classical and easier techniques to establish our error estimates.

Section \ref{sec:numer} presents some numerical results that confirm the theoretical predictions. In particular, robustness with respect to the volumetric parameter and with respect to the presence of ``small edges'' is clearly observed.

Finally, in Section \ref{sec:conclusions} we draw some conclusions.

Throughout the paper we use standard notations for Sobolev spaces, norms and seminorms (for instance, see \cite{LeoniBook}). 

Furthermore, we extensively employ the notation $a\lesssim b$ to denote $a\leq C\, b$, where $C$ is a positive constant independent of both the mesh size $h$ and the volumetric (first Lamé) parameter $\lambda$. Analogously, the notation $a\simeq b$ signifies that $a \leq C\, b \leq C'\, a$, where the positive constant $C'$ likewise remains independent of $h$ and $\lambda$.
\section{A Mixed Formulation of the Contact Problem}\label{sec:mixed-contact}
Let $\Omega^i \subset \R^2$ for $i=1,2$ be an open, bounded and Lipschitz domain, representing the reference configuration of the elastic body. The boundary is denoted by $\Gamma^i:=\partial \Omega ^i$, and is partitioned into three relatively open, mutually disjoint, and measurable parts $\Gamma^i_D$, $\Gamma^i_N$ and $\Gamma^i_C$ with $\text{meas}\:({\Gamma^i_D})>0$. Displacements are prescribed on the Dirichlet boundary $\Gamma^i_D$, surface tractions are applied on the Neumann boundary $\Gamma^i_N$, and potential contact may occur on the contact boundary $\Gamma^i_C$. 
\par
As usual in Contact Mechanics, we adopt the master-slave paradigm (see \cite{ACTA-contact}, \cite{Chouly_Hild_Renard}), choosing $\Gamma^1_C$ as the slave boundary and $\Gamma^2_C$ as the master boundary. In addition, we assume the existence of a sufficiently smooth bijective mapping $\chi : \Gamma^1_C \to \Gamma^2_C$ which defines the contact pairing and satisfies $\chi(\Gamma^1_C) = \Gamma^2_C$. Then, we define the common contact interface $\Gamma_C:=\Gamma^1_C=\chi^{-1}(\Gamma^2_C)$.\\

\par
For each body, we consider homogeneous and isotropic materials and we restrict ourselves to the regime of small deformations. Then, the linearized strain-displacement relation is
\begin{equation*}
    \boldsymbol{\varepsilon} ({\bf u}^i) = \frac{1}{2} (\boldsymbol{\nabla} {\bf u}^i + \boldsymbol{\nabla}^T {\bf u }^i). 
\end{equation*}
The constitutive equation for the symmetric Cauchy stress tensor $\boldsymbol{\sigma}$ is given in terms of the forth-order Hooke tensor $\mathcal{C}$ by
\begin{equation*}
    \boldsymbol{\sigma}({\bf u}^i) = \mathcal{C} {\bf u}^i := 2 \mu^i \boldsymbol{\varepsilon}({\bf u}^i) + \lambda^i \:\text{tr}(\boldsymbol{\varepsilon}({\bf u}^i)) {\bf Id},
\end{equation*}
where $\text{tr}$ denotes the trace operator and $\bf Id$ is the identity tensor. The positive coefficients $\mu^i$ and $\lambda^i$ are the Lamé parameters, which are assumed to be constant in each domain $\Omega^i$, but have possibly different values on each body.
\par
The two bodies satisfy the linearized elastic equilibrium equation, together with Dirichlet and Neumann boundary conditions
  \begin{equation}
    \left\{
    \begin{aligned}
         - \text{div} \boldsymbol{\sigma} ({\bf u}^i) = {\bf f}^i & \quad\text{in } \Omega^i, \\
         {\bf u}^i = {\bf 0} & \quad\text{on } \Gamma^i_D, \\
         \boldsymbol{\sigma}({\bf u}^i) {\bf n}^i = {\bf g}^i & \quad\text{on } \Gamma^i_N, 
    \end{aligned}   
    \right.
  \end{equation}
where ${\bf n}^i$ denotes the outer unit normal vector on $\Gamma^i$, which is almost everywhere well-defined. Here, the volume load ${\bf f}^i$ and the surface traction ${\bf g}^i$ are assumed to be in $[L^2(\Omega^i)]^2$ and $[L^2(\Gamma^i_N)]^2$, respectively. Homogeneous Dirichlet boundary conditions are imposed for the sake of simplicity, but non-homogeneous conditions can be treated in usual ways.
\par
To formulate the contact conditions, we first define the contact stress as
\begin{equation*}
    \boldsymbol{\sigma}:= \boldsymbol{\sigma}({\bf u}^1){\bf n}^1 \quad \text{on }\Gamma_C,
\end{equation*}
along with its normal and tangential components on the slave boundary
\begin{equation}\label{eq:sigma_n_t}
    \sigma_n:= \boldsymbol{\sigma}({\bf u}^1){\bf n}^1 \cdot {\bf n}^1, \quad \boldsymbol{\sigma}_t:= \boldsymbol{\sigma} - \sigma_n {\bf n}^1 \quad \text{on }\Gamma_C.
\end{equation}
In addition, we introduce the jump of the normal displacement along the contact boundary, given by
\begin{equation}\label{eq:u_n}
    \llbracket u_n \rrbracket := ({\bf u}^1 - {\bf u}^2 \circ \chi) \cdot {\bf n}^1 \quad \text{on }\Gamma_C.
\end{equation}
We also define the initial gap function $g_0$, which is assumed to be nonnegative and represents the initial distance between the two bodies in the reference configuration. 
Then, the contact constraints can be expressed as follows
\begin{equation} \label{eq:Signorini_conditions}
    \llbracket u_n \rrbracket \leq g_0, \quad \sigma_n \leq 0,\quad \sigma_n ( \llbracket u_n \rrbracket - g_0) = 0 \quad \text{on } \Gamma_C.
\end{equation}
Conditions (\ref{eq:Signorini_conditions}) are known as the Signorini conditions. The first inequality ensures that there is no interpenetration between the two bodies, while the second condition implies that contact pressure $\sigma_n$ is purely compressive. The third equality, known as the complementary condition, states that the contact pressure can be non-zero only when the two bodies are in contact.
Moreover, the absence of friction along the contact boundary is enforced by
\begin{equation*}
    \boldsymbol{\sigma}_t = {\bf 0} \quad \text{on } \Gamma_C.
\end{equation*}
\par
Finally, Newton's third law requires that the contact forces be in equilibrium over the region where contact occurs; this condition reads as
\begin{equation*}
    \boldsymbol{\sigma}({\bf u}^1) {\bf n}^1 + \boldsymbol{\sigma}({\bf u}^2 \circ \chi)({\bf n}^2 \circ \chi) J_{\chi}={\bf 0} \quad \text{on } \Gamma_C,
\end{equation*}
where $J_\chi$ represents the Jacobian of the mapping $\chi$. 
\newline
\par
Even in the linear elasticity problem, when dealing with nearly incompressible materials (i.e. $\lambda^i \to \infty$), it is well known that the discretization of the displacement-based formulation suffers from numerical volumetric locking, which deteriorates the accuracy of the computed displacement field (see \cite{Babuska_Suri}). To avoid this phenomenon, one of the classical strategies is to adopt the mixed variational formulation of the problem (see \cite{Boffi_Brezzi_Fortin}, \cite{Belgacem_Renard_Slimane}). This approach requires the introduction of another variable (for $i=1,2$), here called \emph{pressure} and given by
\begin{equation*}
    p^i:= \lambda^i \text{div }{\bf u}^i.
\end{equation*}
\par
To derive the weak formulation of the problem, we first present the functional framework suitable for addressing the nearly incompressible contact problem. We introduce the space of admissible displacements, defined as
\begin{equation} \label{eq:displacement_space}
    {\bf V} :={\bf V}^1 \times {\bf V}^2 :=\Big\{ {\bf v} = ({\bf v}^1, {\bf v}^2) \in \big[H^1(\Omega^1)\big]^2 \times \big[ H^1(\Omega^2) \big]^2 : {\bf v}^i = {\bf 0} \text{ on } \Gamma^i_D \text{ for } i=1,2 \Big\},
\end{equation}
endowed with the usual norm and semi-norm
\begin{equation*}
    \norm{{\bf v}}_{{\bf V}}^2 := \norm{{\bf v}^1}_{1,\Omega^1}^2 +  \norm{{\bf v}^2}_{1,\Omega^2}^2 \quad \text{and} \quad \abs{{\bf v}}_{1}^2 := \abs{{\bf v}^1}_{1,\Omega^1}^2 +  \abs{{\bf v}^2}_{1,\Omega^2}^2 \quad \forall {\bf v} \in {\bf V}.
\end{equation*}
Moreover, we define the space of pressures as 
\begin{equation} \label{eq:pressure_space}
    {\bf Q} := Q^1 \times Q^2 := L^2(\Omega^1) \times L^2(\Omega^2),
\end{equation}
endowed with the $L^2$-norm
\begin{equation*}
    \norm{{\bf q}}_{{\bf Q}}^2 := \norm{q^1}_{0,\Omega^1}^2 + \norm{q^2}_{0,\Omega^2}^2 \quad \forall {\bf q} \in {\bf Q}.
\end{equation*}
To impose the kinematic contact conditions, we consider the non-empty, closed and convex subset ${\bf K} \subset {\bf V}$ given by (cf. \eqref{eq:Signorini_conditions}):
\begin{equation} \label{eq:convex_subset}
    {\bf K} := \{ {\bf v} \in {\bf V} : \llbracket v_{n} \rrbracket - g_0 \leq 0 \text{ a.e. on } \Gamma_C \}.
\end{equation}
The mixed variational formulation of the frictionless contact problem is obtained by standard arguments (see \cite{Chouly_Hild_Renard}) and it reads as
\begin{equation}
    \begin{cases} \label{eq:contact_problem}
        \text{find } ({\bf u},{\bf p}) \in {\bf K} \times {\bf Q} \text{ such that } & \\
        a({\bf u}, {\bf v} - {\bf u}) + b({\bf v} -{\bf u}, {\bf p}) \geq F({\bf v} - {\bf u}) & \forall {\bf v} \in {\bf K}, \\
        b({\bf u},{\bf q}) - c_{\lambda}({\bf p},{\bf q}) =0 & \forall {\bf q} \in {\bf Q},
    \end{cases}
\end{equation}
where we set for ${\bf u}, {\bf v} \in {\bf V}$ and ${\bf p}, {\bf q} \in {\bf Q}$
\begin{equation} \label{eq:bilinear_a}
    a({\bf u}, {\bf v}) := \sum_{i=1}^2 a^i({\bf u}^i,{\bf v}^i), \quad a^{i}({\bf u}^i,{\bf v}^i) := 2\mu^i \int_{\Omega^i} \boldsymbol{\varepsilon} ({\bf u}^i) : \boldsymbol{\varepsilon}({\bf v}^i)\; dx,
\end{equation}
\begin{equation} \label{eq:bilinear_b}
    b({\bf v},{\bf q}) := \sum_{i=1}^2 b^i({\bf v}^i, q^i), \quad  b^i({\bf v}^i, q^i):= \int_{\Omega^i} \text{div } {\bf v}^i q^i \; dx,
\end{equation}
\begin{equation} \label{eq:bilinear_c}
    c_{\lambda}({\bf p}, {\bf q}) = \sum_{i=1}^2 \frac{1}{\lambda^i}c^i(p^i, q^i), \quad c^i(p^i,q^i) := \int_{\Omega^i} p^i q^i \; dx,
\end{equation}
\begin{equation} \label{eq:linear_F}
    F({\bf v}) := \sum_{i=1}^2 F^i({\bf v}^i), \quad F^i({\bf v}^i):= \int_{\Omega^i} {\bf f}^i \cdot {\bf v}^i \; dx + \int_{\Gamma_N^i} {\bf g}^i \cdot {\bf v}^i \; d\Gamma.
\end{equation}
We notice that the symmetric bilinear form $a(\cdot, \cdot)$ is continuous and coercive on ${\bf V}$. Moreover, the symmetric bilinear form $c_\lambda(\cdot,\cdot)$ is continuous on ${\bf Q}$ and coercive with a coercivity constant $\gamma(\lambda) = 1/\max \{\lambda^1,\lambda^2 \}$.

Regarding the bilinear form $b(\cdot, \cdot)$, we notice that it is continuous on ${\bf V}\times{\bf Q}$ and it satisfies an appropriate \emph{inf-sup} condition. More precisely, let us introduce
\begin{equation}\label{eq:W-def}
    {\bf W}^i:=\{ {\bf v}^i \in {\bf V}^i: {\bf v}^i_{|\Gamma_C} = {\bf 0}\},
\end{equation}
and we take ${\bf W}:={\bf W}^1 \times {\bf W}^2$. Obviously, we have ${\bf W} \subset {\bf K}$. Let us also denote with $B_i: {\bf V}^i\to (Q^{i})'\equiv Q^i$ the continuous linear operator naturally induced by $b^i(\cdot,\cdot)$, and denote with $B_i^t$ its formal adjoint operator.
We now need to distinguish two cases. 
\begin{enumerate}
\item If $\abs{\Gamma^i_N}=0$, then only Dirichlet and contact boundary conditions are present. In this case, ${\bf W}^i=[H^1_0(\Omega^i)]^2$ and the kernel of $B_i^t$ is
    \begin{equation*}
        H^i:=\ker B_i^t = \R,
    \end{equation*}
which gives    
    \begin{equation*}
        Q^i/H^i = L^2(\Omega^i)/\R \simeq L^2_0(\Omega^i).
    \end{equation*}
\item If $\abs{\Gamma^i_N}>0$, Neumann boundary conditions comes into play, and the kernel of $B_i^t$ becomes trivial
    \begin{equation*}
        H^i:=\ker B_i^t=\{ 0\}.
    \end{equation*}
    Thus, we have
    \begin{equation*}
        Q^i/H^i = Q^i=L^2(\Omega^i).
    \end{equation*}
\end{enumerate} 
Setting ${\bf H}:=H^1\times H^2$, we infer that the inf-sup condition
\begin{equation} \label{eq:continuous_inf_sup}
    \exists \beta>0 : \sup_{{\bf w} \in {\bf W}} \frac{b({\bf w},{\bf q})}{\norm{{\bf w}}_{{\bf V}}} \geq \beta \norm{{\bf q}}_{{\bf Q}/{\bf H}} \quad \text{for all } {\bf q} \in {\bf Q},
\end{equation}
directly follows from standard results (see \cite{Boffi_Brezzi_Fortin}, \cite{Girault_Raviart}), no matter which of the four possible combinations for $H^i$ ($i=1,2$) occurs. Here above, we set ${\bf Q}/{\bf H}:=(Q^1\times Q^2)/(H^1 \times H^2)$ and $\norm{\cdot}_{{\bf Q}/{\bf H}}$ denotes the usual quotient norm.

Therefore, the following existence, uniqueness and uniform stability result for the contact problem (\ref{eq:contact_problem}) holds true.
\begin{proposition}
    The contact problem (\ref{eq:contact_problem}) has a unique solution $({\bf u},{\bf p}) \in {\bf K}\times {\bf Q}$ such that
    \begin{equation*}
        \norm{{\bf u}}_{{\bf V}} + \norm{{\bf p}}_{{\bf Q}/{\bf H}} \leq C \norm{F}_{{\bf V}'},
    \end{equation*}
    where $C$ is a positive constant independent from $\lambda^i$.
    
\end{proposition}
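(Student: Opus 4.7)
The plan is to reduce the mixed system to a pure variational inequality in the displacement by formally eliminating the pressure, and then to recover the pressure bound through the inf-sup condition \eqref{eq:continuous_inf_sup}. More precisely, for any ${\bf u} \in {\bf V}$ the second equation of \eqref{eq:contact_problem} admits the unique solution $p^i = \lambda^i\,\text{div}\,{\bf u}^i$; substituting this into the first inequality yields the reduced problem: find ${\bf u} \in {\bf K}$ such that $\tilde{a}({\bf u}, {\bf v} - {\bf u}) \geq F({\bf v} - {\bf u})$ for every ${\bf v} \in {\bf K}$, where
\begin{equation*}
\tilde{a}({\bf u},{\bf v}) := a({\bf u},{\bf v}) + \sum_{i=1}^2 \lambda^i \int_{\Omega^i} \text{div}\,{\bf u}^i \,\text{div}\,{\bf v}^i \, dx
\end{equation*}
is the usual linear elasticity bilinear form. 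Since the added divergence term is non-negative, Korn's inequality (which applies thanks to the homogeneous Dirichlet conditions on $\Gamma_D^i$) yields $\tilde{a}({\bf v},{\bf v}) \geq a({\bf v},{\bf v}) \gtrsim \norm{{\bf v}}_{{\bf V}}^2$ with a coercivity constant depending only on $\mu^i$ and the Korn constants. Because ${\bf K}$ is non-empty, closed and convex and $F \in {\bf V}'$, the Lions-Stampacchia theorem produces a unique ${\bf u} \in {\bf K}$ with $\norm{{\bf u}}_{{\bf V}} \lesssim \norm{F}_{{\bf V}'}$; setting $p^i := \lambda^i\,\text{div}\,{\bf u}^i$ one directly verifies that $({\bf u},{\bf p})$ solves \eqref{eq:contact_problem}, and uniqueness in ${\bf V}\times {\bf Q}$ follows from the uniqueness for the reduced problem.

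To bound ${\bf p}$ in the quotient norm independently of $\lambda$, I would exploit the inf-sup condition \eqref{eq:continuous_inf_sup}. Since ${\bf W} \subset {\bf V}$ is a linear subspace with $\llbracket w_n \rrbracket = 0$ on $\Gamma_C$, we have ${\bf u} \pm {\bf w} \in {\bf K}$ for every ${\bf w} \in {\bf W}$; inserting these competitors into the variational inequality and combining the two resulting inequalities yields the linear identity
\begin{equation*}
a({\bf u},{\bf w}) + b({\bf w},{\bf p}) = F({\bf w}) \qquad \forall\, {\bf w} \in {\bf W}.
\end{equation*}
Applying \eqref{eq:continuous_inf_sup} then gives
\begin{equation*}
\beta \,\norm{{\bf p}}_{{\bf Q}/{\bf H}} \leq \sup_{{\bf w} \in {\bf W}} \frac{b({\bf w},{\bf p})}{\norm{{\bf w}}_{{\bf V}}} = \sup_{{\bf w} \in {\bf W}} \frac{F({\bf w}) - a({\bf u},{\bf w})}{\norm{{\bf w}}_{{\bf V}}} \lesssim \norm{F}_{{\bf V}'} + \norm{{\bf u}}_{{\bf V}},
\end{equation*}
which, combined with the displacement bound of the previous step, produces the claimed estimate with a constant independent of $\lambda^i$.

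The genuinely delicate point is tracking the $\lambda$-independence of the constant. A naive application of Brezzi's theory to the full saddle-point inequality would introduce the factor $1/\gamma(\lambda) = \max\{\lambda^1,\lambda^2\}$ coming from the coercivity of $c_\lambda$, which is precisely what must be avoided. The elimination of the pressure circumvents this issue: the reduced coercivity of $\tilde{a}$ rests on $a$ alone, and the inf-sup argument used to control ${\bf p}$ only involves the $\lambda$-free form $b$. Once this observation is in place, the rest is a standard combination of Lions-Stampacchia and a Brezzi-type inf-sup argument.
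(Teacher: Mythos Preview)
The paper does not actually supply a proof of this proposition: it simply records the coercivity of $a(\cdot,\cdot)$, the ($\lambda$-dependent) coercivity of $c_\lambda(\cdot,\cdot)$, and the inf-sup condition \eqref{eq:continuous_inf_sup}, and then declares the result as a consequence of standard theory (implicitly referring to \cite{Belgacem_Renard_Slimane}, \cite{Boffi_Brezzi_Fortin}). Your argument is therefore not competing with a proof in the paper but filling in the details the authors omit.

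Your approach is correct. The reduction to the displacement-only inequality via $p^i=\lambda^i\,\mathrm{div}\,{\bf u}^i$ is exactly how one shows equivalence with the primal Signorini problem; the key observation that $\tilde a({\bf v},{\bf v})\ge a({\bf v},{\bf v})$ gives a $\lambda$-free coercivity constant is what makes the displacement bound uniform. One small point worth making explicit: the Lions--Stampacchia theorem by itself gives existence and uniqueness, but the a priori bound $\|{\bf u}\|_{\bf V}\lesssim\|F\|_{{\bf V}'}$ uses that $0_{\bf V}\in{\bf K}$ (which holds here because $g_0\ge 0$), so that testing with ${\bf v}=0$ yields $\tilde a({\bf u},{\bf u})\le F({\bf u})$. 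Your recovery of the pressure bound is also sound: the inclusion ${\bf u}\pm{\bf w}\in{\bf K}$ for ${\bf w}\in{\bf W}$ holds because ${\bf w}_{|\Gamma_C}={\bf 0}$ leaves $\llbracket u_n\rrbracket$ unchanged, and the resulting linear identity on ${\bf W}$ is precisely \eqref{eq:var_eq_aus}, which the paper itself uses later in the discrete analysis. Your closing remark about why a direct Brezzi-type argument would fail (the $c_\lambda$ coercivity constant degenerates) correctly identifies the reason the paper insists on the inf-sup condition over ${\bf W}$ rather than relying on $c_\lambda$ alone.
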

\section{Virtual Element Discretization of the Contact Problem}\label{sec:VEM_contact}
We describe the Virtual Element discretization of the contact problem (\ref{eq:contact_problem}). Given any subset $\omega \subset \R^2$ and $k \in \N$, we will denote by $\Pol_k(\omega)$ the polynomials up to degree $k$ defined on $\omega$. 
\subsection{Virtual element meshes}\label{ss:VEMspaces}
Let $\{ \mathcal{T}^i_h\}_h$ be a sequence of decompositions of $\Omega^i$, $i=1,2$. We assume that each decomposition is made of a finite number of non-overlapping polygons $K$ with
\begin{equation*}
    h_K:=\text{diameter}(K), \quad h^i:=\max_{K \in \mathcal{T}_h^i} h_K, \quad h:=\max_{i=1,2} h^i.
\end{equation*}
Given an element $K \in \mathcal{T}_h^i$, we denote by $\abs{K}$ its area and we assume that its boundary $\partial K$ is divided into $N=N(K)$ straight segments, which are called \emph{edges}. Furthermore, the length of an edge $e\in \partial K$ is denoted by $h_e$. The endpoints of each edge are called \emph{vertices} of the element. We emphasize that some edges may be collinear; consequently, the number of edges may be greater than the number of straight segments that make up the polygon $K$. 
\par As in \cite{BLR:2017}, we deal with the following three assumptions on the decompositions of the domains:
\begin{itemize}
    \item [(A1)] There exists $\gamma \in \R^+$, independent of $h$, such that all elements $K\in \mathcal{T}^i_h$ are star-shaped with respect to a ball $B_K$ of radius $\rho_K \geq \gamma h_K$ and center ${\bf x}_K$;
    \item [(A2)] There exists $C \in \N$, independent of $h$, such that $N(K) \leq C$  for all $K\in \mathcal{T}^i_h$;
    \item [(A3)] There exists $\eta \in \R^+$, independent of $h$, such that for all edges $e \in \partial K$ it holds $h_e \geq \eta h_K$.
\end{itemize}

We observe that the contact boundary $\Gamma_C$ inherits two different decompositions from $\mathcal{T}_h^1$ and $\mathcal{T}_h^2$, where the elements correspond to entire edges of internal polygons. As a result, the mapping $\chi$ does not guarantee any correspondence between the nodes and elements of the two contact boundary meshes.  This issue is well recognized in the finite element literature and frequently leads to a more complex enforcement of the discrete contact conditions, as well as a more involved analysis of the problem.
However, the flexibility of virtual elements in handling hanging nodes enables the use of the \emph{node insertion algorithm} described in \cite{Wriggers_Rust_Reddy}. By inserting additional nodes along the contact boundary decompositions, it is always possible to restore node and element matching. We will refer to this situation as the \emph{contact matching condition}, which facilitates the enforcement of discrete node-to-node contact conditions both theoretically and in practical implementation.
It is important to note, however, that this algorithm may naturally produce decompositions containing “small edges”. More specifically, the insertion of new nodes within pre-existing polygons can cause a violation of assumption (A3). For this reason, we will focus our analysis under the weaker assumptions (A1) and (A2). \\
Unless the \emph{node insertion algorithm} is applied, here and in the rest of the paper we assume the \emph{contact matching condition} holds true, but we allow for \emph{for the occurrence of arbitrary small edeges}; the decomposition $\mathcal{T}_h$ represents the union of the two original decompositions $\mathcal{T}_h^1$ and $\mathcal{T}_h^2$ enriched with the nodes added by the algorithm. Consequently, the contact boundary $\Gamma_C$ has a unique decomposition $\mathcal{T}_h^c$ with nodes ${\bf x}_l^c$ for $l=0,\cdots,l^\star$ and edges $e^c_l$ for $l=0,\cdots,l^\star-1$, where 
\begin{equation*}
    e_l^c := ({\bf x}_l^c, {\bf x}_{l+1}^c) \quad \text{for } l=0,\cdots, l^\star -1.
\end{equation*}
Furthermore, we assume that the endpoints ${\bf c}_1$ and ${\bf c}_2$ of the contact boundary $\Gamma_C$ are vertices of some polygon $K$. We will denote by ${\bf x}_{l+1/2}^c$ for $l=0,\cdots,l^\star-1$ the set of middle points on each contact edge.
In the sequel, we will denote with $\Omega$ the union $\Omega^1\cup\Omega^2$ and we will mainly use the above mentioned mesh $\mathcal{T}_h$, unless we need to make explicit reference to the subdomains $\Omega^i$ and meshes $\mathcal{T}_h^i$.

For simplicity, in what follows we assume that $\Gamma_C$ is a straight line. However, as noted in \cite{ACTA-contact}, the case of piecewise linear (or, more generally, piecewise smooth) $\Gamma_C$ can be handled exactly using the same technique, defining all the quantities on each straight segment, and considering product spaces and broken dualities pairs.  

Moreover, for what follows we need to define
\begin{equation}\label{eq:hm}
    h_m := \min_{K \in \mathcal{T}_h} h_{m(K)},
\end{equation}
where 
\begin{equation}\label{eq:hmK}
h_{m(K)}=\min_{e \in \partial K} \abs{e}
\text{ is the length of the smallest edge of the element $K$}.
\end{equation}

The global bilinear forms $a(\cdot,\cdot)$, $b(\cdot,\cdot)$ and $c_\lambda(\cdot,\cdot)$, involving integrals on $\Omega=\Omega^1\cup\Omega^2$ (cf. \eqref{eq:bilinear_a}-\eqref{eq:bilinear_c}), as well as the norms $\norm{\cdot}_{1,\Omega}$ and $\norm{\cdot}_{0,\Omega}$ can be decomposed into local contributions. Indeed, using obvious notations, we have
\begin{equation*}
    a({\bf u},{\bf v}) = \sum_{K \in \mathcal{T}_h} a^{K}({\bf u},{\bf v}) \quad \forall {\bf u},{\bf v} \in {\bf V},
\end{equation*}
\begin{equation*}
    b({\bf v},{\bf q}) = \sum_{K \in \mathcal{T}_h} b^{K}({\bf v},{\bf q}) \quad \forall {\bf u} \in {\bf V} \text{ and } \forall {\bf q} \in {\bf Q},
\end{equation*}
\begin{equation*}
    c_\lambda({\bf p},{\bf q}) = \sum_{K \in \mathcal{T}_h} c^{K}_\lambda({\bf p},{\bf q}) \quad \forall {\bf p},{\bf q}\in {\bf Q},
\end{equation*}
and
\begin{equation*}
    \norm{{\bf v}}_{{\bf V}}=\norm{{\bf v}}_{1,\Omega} = \Big(\sum_{K \in \mathcal{T}_h} \norm{{\bf v}}_{1,K}^2 \Big)^{1/2} \quad \forall {\bf v} \in {\bf V}, \qquad \norm{{\bf q}}_{0,\Omega} = \Big(\sum_{K \in \mathcal{T}_h} \norm{{\bf q}}_{0,K}^2 \Big)^{1/2} \quad \forall {\bf q} \in {\bf Q}.
\end{equation*}
Above and in the sequel, with a little abuse of notation, when ${\bf p}$ and ${\bf q}$ are considered in the element $K$, we agree that ${\bf p}=p^i_{|K}$ and ${\bf q}=q^i_{|K}$, if $K\in\mathcal{T}_h^i$. The same applies to ${\bf u}$ and ${\bf v}$. 
Moreover, we define the $H^1$-broken semi-norm on $\Omega^i$ based on the polygonal decomposition $\mathcal{T}_h^i$
\begin{equation*}
    \abs{{\bf v}}_{1,\Omega,h}:=\big( \sum_{K \in \mathcal{T}_h} \abs{{\bf v}}^2_{1,K}\big)^{1/2} \quad \forall {\bf v}\in [L^2(\Omega)]^2 \text{ such that } 
    {\bf v}_{|K}\in [H^1(K)]^2.
\end{equation*}
For the discrete spaces, we consider two finite-dimensional subspaces ${\bf V}_h:={\bf V}^1_h \times {\bf V}^2_h \subset {\bf V}$ and ${\bf Q}_h:=Q_h^1 \times Q_h^2 \subset {\bf Q}$. We assume that both ${\bf V}^i_h$ and $Q_h^i$ are obtained by gluing local spaces ${\bf V}^i_{h|K}$ and $Q_{h|K}^i$. In addition, we require that, for any $K \in \mathcal{T}^i_h$ and for some integer $k\geq 1$, we have $[\Pol_k(K)]^2 \subset {\bf V}_{h|K}^i$.

\subsection{Discrete bilinear forms}\label{ssa:discrete_forms}
We now define the discrete versions of the bilinear forms involved in the discretization of the contact problem. For what concerns $b(\cdot,\cdot)$ and $c_\lambda(\cdot,\cdot)$, we simply consider their restriction to the discrete spaces, assuming that these bilinear forms are computable. 
As usual in the VEM framework, we introduce a symmetric and computable discrete bilinear form $a_h: {\bf V}_h \times {\bf V}_h \to \R $ which can be split as
\begin{equation} \label{eq:discrete_decomposition_property_a}
    a_h ({\bf u}_h,{\bf v}_h) = \sum_{K \in \mathcal{T}_h} a_h^{K}({\bf u}_h,{\bf v}_h) \quad \forall {\bf u}_h,{\bf v}_h \in {\bf V}_h,
\end{equation}
where the local discrete bilinear forms
\begin{equation*}
    a_h^{K} : {\bf V}_{h|K} \times {\bf V}_{h|K} \to \R
\end{equation*}
approximate the continuous bilinear forms $a^{K}(\cdot,\cdot)$ and satisfy the following properties:
\begin{itemize}
    \item {\bf k-consistency}: for all ${\bf z}_k \in [\Pol_{k}(K)]^2$ and ${\bf v}_h \in {\bf V}_{h|K}$
    \begin{equation} \label{eq:k_consistency}
        a_h^{K}({\bf z}_k,{\bf v}_h) = a^{K}({\bf z}_k,{\bf v}_h);
    \end{equation}
    \item {\bf stability}: there exist two positive quantities $\aishK$ and $\asihK$, which may depend on $h_K$, such that, for all ${\bf v}_h \in {\bf V}_{h|K}$, it holds
    \begin{equation} \label{eq:stability}
        \aishK \;a^{K}({\bf v}_h,{\bf v}_h) \leq a_h^{K}({\bf v}_h,{\bf v}_h) \leq \asihK \;a^{K}({\bf v}_h,{\bf v}_h).
    \end{equation}
\end{itemize}
To define the bilinear forms $a^{K}_h(\cdot,\cdot)$, we adopt the following strategy, see \cite{volley}. 
For any $K \in \mathcal{T}_h$, we introduce the energy projection operator $\Pi_k^{\varepsilon,K}:{\bf V}_{h|K} \to [\Pol_k(K)]^2$ defined as
\begin{equation} \label{eq:energy_projection_operator}
    \begin{cases}
        a^{K}({\bf z}_k, {\bf v}_h-\Pi_k^{\varepsilon,K}{\bf v}_h) = 0 &\forall {\bf z}_k \in [\Pol_k(K)]^2, \\
        P_j^{K}({\bf v}_h-\Pi_k^{\varepsilon,K}{\bf v}_h) = 0 & \text{for } j=1,2,3,
    \end{cases}
\end{equation}
where $P_j^{K}$ are suitable operators needed to fix the rigid body motions. It is straightforward to check that the energy projection is well-defined and $\Pi_k^{\varepsilon,K}{\bf z}_k= {\bf z}_k$ for all ${\bf z}_k \in [\Pol_k(K)]^2$. 
We also introduce a (symmetric) stabilizing bilinear form $S^{K}: {\bf V}_{h|K} \times {\bf V}_{h|K} \to \R$, and set
\begin{equation}\label{eq:abstr_ah}
    a_h^{K}({\bf u}_h,{\bf v}_h) := a^{K}(\Pi_k^{\varepsilon,K}{\bf u}_h,\Pi_k^{\varepsilon,K}{\bf v}_h) + S^{K}\big( (I-\Pi_k^{\varepsilon,K}){\bf u}_h,(I-\Pi_k^{\varepsilon,K}){\bf v}_h \big),
\end{equation}
for all ${\bf u}_h,{\bf v}_h \in {\bf V}_{h|K}$. \\
\par
The symmetry of $a^{K}_h(\cdot,\cdot)$ and the stability property (\ref{eq:stability}) easily imply the continuity of the local and global discrete bilinear forms
\begin{equation} \label{eq:local_discrete_continuity}
    a_h^{K}({\bf u}_h,{\bf v}_h) \leq 2\mu_K\asihK \abs{{\bf u}_h}_{1,K} \abs{{\bf v}_h}_{1,K} \quad \forall {\bf u}_h,{\bf v}_h \in {\bf V}_{h|K},
\end{equation}
where  $\mu_K = \mu^i$ if $K\in\mathcal{T}_h^i$.
Similarly, from the stability property (\ref{eq:stability}) and the coercivity of $a(\cdot,\cdot)$, it follows the coercivity of the discrete bilinear form:
\begin{equation} \label{eq:global_discrete_coercivity}
    \aish \; \alpha \norm{{\bf v}_h}_{1,\Omega}^2 \leq a_h({\bf v}_h,{\bf v}_h) \quad \forall {\bf v}_h \in {\bf V}_h,
\end{equation}
where 
\begin{equation} \label{eq:constant_discrete_global_coercivity}
    \aish:=\min_{K \in \mathcal{T}_h} \aishK,  \qquad \alpha:=\min\{\alpha^1,\alpha^2\}  .
\end{equation}
\subsection{The Discrete Problem}\label{ss:discrete_problem}
Let ${\bf K}_h \subset {\bf V}_h$ be a non-empty, closed, and convex subset of ${\bf V}_h$ containing the zero element $0_{\bf V}$ and not necessarily contained in ${\bf K}$. For any ${\bf F}_h \in {\bf V}_h'$, we consider the VEM discretization of the contact problem
\begin{equation} \label{eq:Discrete_problem}
    \begin{cases}
        \text{Find } ({\bf u}_h,{\bf p}_h) \in {\bf K}_h \times {\bf Q}_h \text{ such that } & \\
        a_h({\bf u}_h, {\bf v}_h - {\bf u}_h) + b({\bf v}_h -{\bf u}_h, {\bf p}_h) \geq {\bf F}_h({\bf v}_h - {\bf u}_h) & \forall {\bf v}_h \in {\bf K}_h, \\
        b({\bf u}_h,{\bf q}_h) - c_{\lambda}({\bf p}_h,{\bf q}_h) =0 & \forall {\bf q}_h \in {\bf Q}_h.
    \end{cases}
\end{equation}
Obviously, the bilinear forms $b(\cdot,\cdot)$ and $c_\lambda(\cdot,\cdot)$ are continuous also on the discrete spaces, and $c_\lambda(\cdot,\cdot)$ is coercive on ${\bf Q}_h$, too. Moreover, from (\ref{eq:local_discrete_continuity}) it follows that $a_h(\cdot,\cdot)$ is continuous on ${\bf V}_h$, i.e.
\begin{equation}
    a_h({\bf u}_h,{\bf v}_h) \leq 2 \mu_{max}\asuph \abs{{\bf u}_h}_{1,\Omega} \abs{{\bf v}_h}_{1,\Omega} \quad \forall {\bf u}_h,{\bf v}_h \in {\bf V}_h,
\end{equation}
where we set
\begin{equation}\label{eq:constant_continuity}
\mu_{max}:= \max\{\mu^1,\mu^2\}, \qquad \asuph:=\max_{K \in \mathcal{T}_h} \asihK.    
\end{equation}
\par
In order to show the well-posedness of the discrete problem, we assume that there exists ${\bf W}_h \subseteq {\bf K}_h \cap {\bf W}$ for which the discrete inf-sup condition holds (see \cite{Belgacem_Renard_Slimane}), i.e.
\begin{equation} \label{eq:discrete_inf_sup}
    \exists \beta_h >0 : \sup_{{\bf w}_h \in {\bf W}_h} \frac{b({\bf w}_h,{\bf q}_h)}{\norm{{\bf w}_h}_{\bf V}} \geq \beta_h \norm{{\bf q}_h}_{{\bf Q}/{\bf H}} \quad \text{for all } {\bf q}_h \in {\bf Q}_h.
\end{equation}
Under the previous assumptions, recalling also the coercivity property and \eqref{eq:global_discrete_coercivity}, we thus have that Problem \ref{eq:Discrete_problem} has a unique solution $({\bf u}_h,{\bf p}_h) \in {\bf K}_h \times {\bf Q}_h $.
\subsection{A general convergence result}\label{ss:abstract_convergence}
In this section, we provide an abstract estimate of the approximation error, considering the case of Dirichlet-contact boundary conditions; the other cases, in which Neumann boundary conditions appear, can be treated using similar and even simpler techniques.
Before proceeding, we introduce the following orthogonal decomposition:
\begin{equation*}
    q^i = \bar{q}^i+q_0^i \quad \forall q^i \in Q^i,
\end{equation*}
with $q_0^i \in H^i$ and $\bar{q}^i \in (H^i)^{\perp}$ and we note that
\begin{equation*}
    \norm{q^i}_{Q^i/H^i} := \inf_{c \in H^i} \norm{q^i+c}_{0,\Omega^i} = \norm{{\bar q}^i}_{0,\Omega^i} \quad \forall q^i \in Q^i.
\end{equation*}
\begin{proposition} \label{prop:pressure_estimate}
    Let $({\bf u},{\bf p})$ and $({\bf u}_h,{\bf p}_h)$ be the solutions of the continuous and discrete problems, respectively. Then, for any ${\bf p}_I \in {\bf Q}_h$, $F_h \in {\bf V}_h'$ and ${\bf u}_{\pi}=({\bf u}^1_{\pi},{\bf u}^2_{\pi})$ such that ${\bf u}^i_{\pi|K}\in[\Pol_k(K)]^2$, it holds
    \begin{equation*}
        \norm{{\bf p}-{\bf p}_h}_{{\bf Q}/{\bf H}} \leq C_p(h) \big(\norm{{\bf u} - {\bf u}_h}_{{\bf V}} + \abs{{\bf u}-{\bf u}_\pi}_{1,\Omega,h} + \norm{{\bf p}-{\bf p}_I}_{{\bf Q}/{\bf H}} + \norm{F-F_h}_{{\bf V}'} \big),
    \end{equation*}
    where 
    \begin{equation} \label{eq:constant_pressure}
        C_p(h):=\max\{1,{\beta_h}^{-1},2\mu_{max}{\beta_h}^{-1} (\asuph+1)\}.
    \end{equation}
\end{proposition}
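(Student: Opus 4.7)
The plan is to perform a standard mixed-method/inf-sup style argument in the spirit of Belgacem--Renard--Slimane, adapted to the VEM setting where the consistency error involving $a_h-a$ must be handled via a piecewise polynomial $\mathbf{u}_\pi$ and the $k$-consistency property \eqref{eq:k_consistency}.

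\smallskip

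\textbf{Step 1: split the pressure error and invoke the discrete inf-sup.} I would start from the triangle inequality
\[
\norm{\mathbf{p}-\mathbf{p}_h}_{\mathbf{Q}/\mathbf{H}} \le \norm{\mathbf{p}-\mathbf{p}_I}_{\mathbf{Q}/\mathbf{H}} + \norm{\mathbf{p}_I-\mathbf{p}_h}_{\mathbf{Q}/\mathbf{H}},
\]
and control the second term through the discrete inf-sup condition \eqref{eq:discrete_inf_sup} applied to $\mathbf{p}_I-\mathbf{p}_h\in\mathbf{Q}_h$:
\[
\beta_h\,\norm{\mathbf{p}_I-\mathbf{p}_h}_{\mathbf{Q}/\mathbf{H}} \le \sup_{\mathbf{w}_h\in\mathbf{W}_h}\frac{b(\mathbf{w}_h,\mathbf{p}_I-\mathbf{p})+b(\mathbf{w}_h,\mathbf{p}-\mathbf{p}_h)}{\norm{\mathbf{w}_h}_{\mathbf{V}}}.
\]
The first summand is immediately bounded by $\norm{\mathbf{p}-\mathbf{p}_I}_{\mathbf{Q}/\mathbf{H}}\norm{\mathbf{w}_h}_{\mathbf{V}}$: indeed $\mathbf{w}_h\in\mathbf{W}_h\subset\mathbf{W}$, so $\mathbf{w}_h$ vanishes on $\Gamma_D\cup\Gamma_C$, hence $b(\mathbf{w}_h,\mathbf{c})=0$ for every $\mathbf{c}\in\mathbf{H}$, and one can pass to the quotient norm.

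\smallskip

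\textbf{Step 2: derive an equality from the two variational inequalities.} Since $\mathbf{w}_h\in\mathbf{W}_h\subset\mathbf{K}_h\cap\mathbf{W}$ vanishes on $\Gamma_C$, the admissible sets $\mathbf{K}_h$ and $\mathbf{K}$ are stable under translations by $\mathbf{w}_h$, so $\mathbf{u}_h\pm\mathbf{w}_h\in\mathbf{K}_h$ and $\mathbf{u}\pm\mathbf{w}_h\in\mathbf{K}$. Inserting both choices in \eqref{eq:contact_problem} and \eqref{eq:Discrete_problem} turns each variational inequality into an equation on $\mathbf{W}_h$, and subtracting them gives
\[
b(\mathbf{w}_h,\mathbf{p}-\mathbf{p}_h) = \bigl[a_h(\mathbf{u}_h,\mathbf{w}_h)-a(\mathbf{u},\mathbf{w}_h)\bigr] + \bigl[F(\mathbf{w}_h)-F_h(\mathbf{w}_h)\bigr].
\]
The loading difference is dispatched by $|F(\mathbf{w}_h)-F_h(\mathbf{w}_h)|\le\norm{F-F_h}_{\mathbf{V}'}\norm{\mathbf{w}_h}_{\mathbf{V}}$.

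\smallskip

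\textbf{Step 3: treat the consistency term via $\mathbf{u}_\pi$.} Adding and subtracting $\mathbf{u}_\pi$ elementwise and exploiting the $k$-consistency \eqref{eq:k_consistency} (which yields $a_h^K(\mathbf{u}_\pi,\mathbf{w}_h)=a^K(\mathbf{u}_\pi,\mathbf{w}_h)$ on every $K$), I would write
\[
a_h(\mathbf{u}_h,\mathbf{w}_h)-a(\mathbf{u},\mathbf{w}_h) = a_h(\mathbf{u}_h-\mathbf{u}_\pi,\mathbf{w}_h) - a(\mathbf{u}-\mathbf{u}_\pi,\mathbf{w}_h).
\]
Using the discrete continuity \eqref{eq:local_discrete_continuity} (with the factor $2\mu_{max}\asuph$) on the first piece and the continuity of $a$ on the second, and then the triangle inequality $|\mathbf{u}_h-\mathbf{u}_\pi|_{1,\Omega,h}\le\norm{\mathbf{u}-\mathbf{u}_h}_{\mathbf{V}}+|\mathbf{u}-\mathbf{u}_\pi|_{1,\Omega,h}$, one obtains
\[
|a_h(\mathbf{u}_h,\mathbf{w}_h)-a(\mathbf{u},\mathbf{w}_h)| \le 2\mu_{max}(\asuph+1)\bigl(\norm{\mathbf{u}-\mathbf{u}_h}_{\mathbf{V}}+|\mathbf{u}-\mathbf{u}_\pi|_{1,\Omega,h}\bigr)\norm{\mathbf{w}_h}_{\mathbf{V}}.
\]

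\smallskip

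\textbf{Step 4: assemble.} Dividing by $\norm{\mathbf{w}_h}_{\mathbf{V}}$, taking the supremum over $\mathbf{w}_h\in\mathbf{W}_h$, dividing by $\beta_h$, and combining with the triangle inequality of Step~1 yields exactly the claimed estimate, with the constant $C_p(h)=\max\{1,\beta_h^{-1},2\mu_{max}\beta_h^{-1}(\asuph+1)\}$ arising naturally from collecting the three scalings. The only genuinely delicate step is Step~2: one must be sure that $\mathbf{K}_h$ really is invariant under translations by elements of $\mathbf{W}_h$ (so that the discrete VI collapses to an equation on the subspace $\mathbf{W}_h$), which is the reason the abstract assumption $\mathbf{W}_h\subseteq\mathbf{K}_h\cap\mathbf{W}$ is imposed. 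Once this is secured, the remainder is a clean Strang-type manipulation.
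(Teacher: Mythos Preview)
Your proof is correct and follows essentially the same route as the paper's: triangle inequality plus discrete inf-sup on $\mathbf{p}_I-\mathbf{p}_h$, the reduction of the variational inequalities to the equations $a(\mathbf{u},\mathbf{w}_h)+b(\mathbf{w}_h,\mathbf{p})=F(\mathbf{w}_h)$ and $a_h(\mathbf{u}_h,\mathbf{w}_h)+b(\mathbf{w}_h,\mathbf{p}_h)=F_h(\mathbf{w}_h)$ on $\mathbf{W}_h$, and the $k$-consistency trick with $\mathbf{u}_\pi$ to bound $a_h(\mathbf{u}_h,\mathbf{w}_h)-a(\mathbf{u},\mathbf{w}_h)$. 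Your Step~2 is in fact slightly more explicit than the paper's, which simply asserts the two equations on $\mathbf{W}_h$ without spelling out the $\pm\mathbf{w}_h$ argument.
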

\begin{proof}
    Due to the triangular inequality, we have
    \begin{equation}\label{eq:pressure_tri_ineq}
        \norm{{\bf p}-{\bf p}_h}_{{\bf Q}/{\bf H}} \leq \norm{{\bf p}-{\bf p}_I}_{{\bf Q}/{\bf H}} + \norm{{\bf p}_I-{\bf p}_h}_{{\bf Q}/{\bf H}} \quad \forall {\bf p}_I \in {\bf Q}_h.
    \end{equation}
    From the inf-sup condition, we get
    \begin{equation} \label{eq:pressure_aus}
        \beta_h \norm{{\bf p}_I-{\bf p}_h}_{{\bf Q}/{\bf H}} \leq \sup_{{\bf w}_h \in {\bf W}_h} \frac{b({\bf w}_h,{\bf p}_I-{\bf p}_h)}{\norm{{\bf w}_h}_{{1,\Omega}}} = \sup_{{\bf w}_h \in {\bf W}_h} \left(\frac{b({\bf w}_h,{\bf p}_I-{\bf p})}{\norm{{\bf w}_h}_{{1,\Omega}}}+\frac{b({\bf w}_h,{\bf p}-{\bf p}_h)}{\norm{{\bf w}_h}_{{1,\Omega}}}\right).
    \end{equation}
    Now, we bound each term separately. Firstly, we observe that the continuity of $b^i(\cdot,\cdot)$ implies
    \begin{equation*}
        b({\bf w}_h,{\bf p}_I-{\bf p}) = b({\bf w}_h,\bar {\bf p}_I-\bar {\bf p}) \leq  \norm{{\bf w}_h}_{1,\Omega} \norm{{\bf p}_I - {\bf p}}_{{\bf Q}/{\bf H}},
    \end{equation*}
    from which we have
    \begin{equation}\label{eq:pressure_first_part}
        \frac{\abs{b({\bf w}_h,{\bf p}_I-{\bf p})}}{\norm{{\bf w}_h}_{{1,\Omega}}} \leq \norm{{\bf p}_I - {\bf p}}_{{\bf Q}/{\bf H}}.
    \end{equation}
    To continue, since $({\bf u},{\bf p})$ and $({\bf u}_h,{\bf p}_h)$ are the solutions of \eqref{eq:contact_problem} and \eqref{eq:Discrete_problem}, respectively, it follows
    \begin{equation} \label{eq:var_eq_aus}
        a({\bf u},{\bf w})+b({\bf w},{\bf p}) = F({\bf w}) \quad \forall {\bf w} \in {\bf W},
    \end{equation}
    \begin{equation} \label{eq:var_eq_aus_discrete_0}
        a_h({\bf u}_h,{\bf w}_h)+b({\bf w}_h,{\bf p}_h) = F_h({\bf w}_h) \quad \forall {\bf w}_h \in {\bf W}_h.
    \end{equation}
    Using ${\bf W}_h \subset {\bf W}$, we can test equation (\ref{eq:var_eq_aus}) for any ${\bf w}_h \in {\bf W}_h$, leading to:
    \begin{equation} \label{eq:var_eq_aus_discrete}
        a({\bf u},{\bf w}_h)+b({\bf w}_h,{\bf p}) = F({\bf w}_h) \quad \forall {\bf w}_h \in {\bf W}_h.
    \end{equation}
    Subtracting (\ref{eq:var_eq_aus_discrete}) and (\ref{eq:var_eq_aus_discrete_0}), we get for all ${\bf w}_h \in {\bf W}_h$
    \begin{equation} \label{eq:mu1_mu2}
        b({\bf w}_h,{\bf p}-{\bf p}_h) = \langle F - F_h,{\bf w}_h \rangle_{{\bf V}'\times {\bf V}} + \big(a_h({\bf u}_h,{\bf w}_h)-a({\bf u},{\bf w}_h)\big)=: \mu_1({\bf w}_h)+\mu_2({\bf w}_h).
    \end{equation}
    The term $\mu_1({\bf w}_h)$ can be easily bounded by
    \begin{equation} \label{eq:estimate_mu1}
        \abs{\mu_1({\bf w}_h)} = \abs{\langle F - F_h,{\bf w}_h \rangle}_{{\bf V}'\times {\bf V}} \leq \norm{F-F_h}_{{\bf V}'} \norm{{\bf w}_h}_{{1,\Omega}}.
    \end{equation}
    For the term $\mu_2({\bf w}_h)$, we have
        \begin{align*}
            \mu_2({\bf w}_h) &= \sum_{K \in \mathcal{T}_h} \Big( a_h^{K}({\bf u}_h,{\bf w}_h)-a^{K}({\bf u},{\bf w}_h) \Big) \quad (\text{use } \pm {\bf u}_\pi\text{ and }(\ref{eq:k_consistency}) ) \\
            &= \sum_{K \in \mathcal{T}_h} \Big( a_h^{K}({\bf u}_h-{\bf u}_\pi,{\bf w}_h)+a^{K}({\bf u}_\pi-{\bf u},{\bf w}_h) \Big) \quad (\text{use }(\ref{eq:local_discrete_continuity}))\\
            &\leq 2\mu_{max} \big( \asih \abs{{\bf w}_h}_{1,\Omega}\abs{{\bf u}_h-{\bf u}_\pi}_{1,\Omega,h} + \abs{{\bf w}_h}_{1,\Omega}\abs{{\bf u}-{\bf u}_\pi}_{1,\Omega,h}\big)\\
            &\leq 2\mu_{max}\asuph\abs{{\bf u}_h-{\bf u}_\pi}_{1,\Omega,h} \norm{{\bf w}_h}_{1,\Omega}+2\mu_{max}\abs{{\bf u}-{\bf u}_\pi}_{1,\Omega,h} \norm{{\bf w}_h}_{1,\Omega},
        \end{align*}
    where ${\bf u}_\pi$ is a piecewise polynomial of degree $k$ on each $K \in \mathcal{T}_h$. Thus
    \begin{equation}\label{eq:estimate_mu2}
        \abs{\mu_2({\bf w}_h)} \leq 2\mu_{max}\asuph\abs{{\bf u}_h-{\bf u}_\pi}_{1,\Omega,h} \norm{{\bf w}_h}_{1,\Omega}+2\mu_{max}\abs{{\bf u}-{\bf u}_\pi}_{1,\Omega,h} \norm{{\bf w}_h}_{1,\Omega}.
    \end{equation}
    Then, combining (\ref{eq:estimate_mu1}) and (\ref{eq:estimate_mu2}) in (\ref{eq:mu1_mu2}), we get
    \begin{equation} \label{eq:pressure_second_part}
        \frac{\abs{b({\bf w}_h,{\bf p}-{\bf p}_h)}}{\norm{w_h}_{1,\Omega}} \leq \norm{F-F_h}_{{\bf V}'} +   2\mu_{max}\asuph\abs{{\bf u}_h-{\bf u}_\pi}_{1,\Omega,h} +2\mu_{max}\abs{{\bf u}-{\bf u}_\pi}_{1,\Omega,h}.
    \end{equation}
    Inserting (\ref{eq:pressure_first_part}) and (\ref{eq:pressure_second_part}) in (\ref{eq:pressure_aus}) and applying triangle's inequality, we have
    \begin{align*} 
        \beta_h \norm{{\bf p}_I-{\bf p}_h}_{{\bf Q}/{\bf H}} &\leq \norm{F-F_h}_{{\bf V}'} + 2\mu_{max}\;\asuph\abs{{\bf u}_h-{\bf u}}_{1,\Omega} + \\ 
        &+2\mu_{max}\;(\asuph+1)\abs{{\bf u}-{\bf u}_\pi}_{1,\Omega,h} + \norm{{\bf p}_I - {\bf p}}_{{\bf Q}/{\bf H}}.
    \end{align*}
    Defining $\tilde C_p(h):=\max\{\beta_h^{-1},2\mu_{max}\beta_h^{-1}(\asuph+1)\}$, we get
    \begin{equation} \label{eq:estimate_pressure_intermidiate}
        \norm{{\bf p}_I-{\bf p}_h}_{{\bf Q}/{\bf H}} \leq {\tilde C}_p(h) \big(\norm{F-F_h}_{{\bf V}'} + \abs{{\bf u}_h-{\bf u}}_{1,\Omega} + \abs{{\bf u}-{\bf u}_\pi}_{1,\Omega,h} + \norm{{\bf p}_I - {\bf p}}_{{\bf Q}/{\bf H}}\big).
    \end{equation}
    Finally, from (\ref{eq:pressure_tri_ineq}) we get
    \begin{equation*}
        \norm{{\bf p}-{\bf p}_h}_{{\bf Q}/{\bf H}} \leq C_p(h) \Big(\norm{F-F_h}_{{\bf V}'} + \norm{{\bf u}_h-{\bf u}}_{{\bf V}} + \abs{{\bf u}-{\bf u}_\pi}_{1,\Omega,h} + \norm{{\bf p}_I - {\bf p}}_{{\bf Q}/{\bf H}} \Big),
    \end{equation*}
    where $C_p(h):=\max\{1,\tilde C_p(h)\}$.
\end{proof}
%
%
Before proceeding to estimate the error on the displacement field, we notice that the continuity of the trace operator (see \cite{Kikuchi_Oden})
implies the existence of a positive constant $C_{\Gamma_C}$ such that 
\begin{equation}\label{eq:Ctrace}
    \int_{\Gamma_C} \psi \llbracket v_{n} \rrbracket d\Gamma \leq C_{\Gamma_C} \norm{\psi}_{H^{1/2}_{00}(\Gamma_C)'} \norm{{\bf v}}_{{\bf V}} 
    \quad \text{for all $\psi\in H^{1/2}_{00}(\Gamma_C)'$ and $\bf{v}\in {\bf V}$}.
\end{equation}
We are ready to prove the following result.
\begin{proposition} \label{prop:displacement_estimate}
    Let $({\bf u},{\bf p}) \in {\bf K}\times {\bf Q}$ and $({\bf u}_h,{\bf p}_h) \in {\bf K}_h \times {\bf Q}_h$ be the solutions of the continuous and discrete problems, respectively. Then, for every ${\bf u}_I \in {\bf K}_h$, ${\bf p}_I \in {\bf Q}_h$ such that ${({{\bf p}}_I)}_0={\bf p}_0$, $F_h \in {\bf V}_h'$, ${\bf u}_{\pi}=({\bf u}^1_{\pi},{\bf u}^2_{\pi})$ such that ${\bf u}_{\pi|K}\in[\Pol_k(K)]^2$ and $\psi_h \in L^2(\Gamma_C)$ such that $\psi_{h|e^c_l} \in \Pol_0(e^c_l)$ for all $0 \le l \le l^{\star}-1$, it holds
    \begin{equation*}
    \begin{split}
        \norm{{\bf u}-{\bf u}_h}_{{\bf V}}^2 &\leq \frac{4}{\asdownh\alpha} \int_{\Gamma_C} \psi_h \big( \llbracket u_{I,n} \rrbracket - \llbracket u_{h,n} \rrbracket \big) \;d\Gamma +
         C(h) \Big(\norm{{\bf u}-{\bf u}_I}_{{\bf V}}^2 + \abs{{\bf u} - {\bf u}_\pi}_{1,\Omega,h}^2 + \\
         &+\norm{{\bf p}- {\bf p}_I}_{{\bf Q}/{\bf H}}^2 + \norm{F-F_h}_{\bf V'}^2 + \norm{\sigma_n -\psi_h}^2_{H^{1/2}_{00}(\Gamma_C)'}\Big).
    \end{split}
    \end{equation*}
    Setting
    \begin{equation*}
            C_1(h):=\frac{96}{\asdownh \alpha}\mu_{max}^2\max\big\{(\asuph)^2,1\big\},
    \end{equation*}
    \begin{equation*}
        C_2(h):=\frac{1}{2}C_p(h)^2 \max\big\{ 5+\frac{1}{\lambda_{min}}+\frac{6}{\asdownh\alpha},1+\frac{5}{\lambda_{min}}+\frac{6}{\asdownh\alpha}(1+\frac{1}{\lambda_{min}^2})\big\},
    \end{equation*}
    and recalling \eqref{eq:Ctrace}, the constant $C(h)$ is given by
    \begin{equation} \label{eq:constant_displacement}
        C(h):=\frac{8}{\asdownh\alpha} \max\big\{ C_1(h),C_2(h),\frac{3}{\asdownh\alpha}, \frac{3C_{\Gamma_C}^2}{\asdownh\alpha}\big\}.
    \end{equation}
\end{proposition}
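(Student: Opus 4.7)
The plan is to use the coercivity of $a_h$ on the auxiliary error $\mathbf{u}_I - \mathbf{u}_h \in \mathbf{V}_h$ and then bound the right-hand side by combining the two variational problems. Starting from $\asdownh\alpha\norm{\mathbf{u}_I - \mathbf{u}_h}_{\mathbf{V}}^2 \le a_h(\mathbf{u}_I - \mathbf{u}_h,\mathbf{u}_I - \mathbf{u}_h)$, I would use $\mathbf{u}_I \in \mathbf{K}_h$ as a test function in the discrete variational inequality \eqref{eq:Discrete_problem} to eliminate $a_h(\mathbf{u}_h,\mathbf{u}_I - \mathbf{u}_h)$, and then, on the continuous side, invoke the equality
\begin{equation*}
a(\mathbf{u},\boldsymbol{\phi}) + b(\boldsymbol{\phi},\mathbf{p}) = F(\boldsymbol{\phi}) + \int_{\Gamma_C} \sigma_n \llbracket \phi_{n} \rrbracket\,d\Gamma \qquad \forall \boldsymbol{\phi}\in\mathbf{V},
\end{equation*}
which follows from integration by parts and Newton's third law, with $\boldsymbol{\phi} = \mathbf{u}_I - \mathbf{u}_h$ to eliminate $F(\mathbf{u}_I - \mathbf{u}_h)$. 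This produces a decomposition into four terms (all evaluated at $\mathbf{u}_I - \mathbf{u}_h$): a consistency term $T_1 := a_h(\mathbf{u}_I,\cdot) - a(\mathbf{u},\cdot)$, a pressure term $T_2 := b(\cdot,\mathbf{p}_h - \mathbf{p})$, a data term $T_3 := (F - F_h)(\cdot)$, and the contact term $T_4 := \int_{\Gamma_C}\sigma_n \llbracket(u_I-u_h)_{n}\rrbracket\,d\Gamma$.

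Each piece is then estimated separately. For $T_1$ I would insert $\pm \mathbf{u}_\pi$, exploit the $k$-consistency \eqref{eq:k_consistency} to kill the polynomial contribution, and apply the local continuity \eqref{eq:local_discrete_continuity}; a further triangle inequality converts $\abs{\mathbf{u}_I - \mathbf{u}_\pi}_{1,\Omega,h}$ into $\norm{\mathbf{u} - \mathbf{u}_I}_{\mathbf{V}} + \abs{\mathbf{u} - \mathbf{u}_\pi}_{1,\Omega,h}$, producing (after Young) the coefficient $C_1(h)$. For $T_2$ I would insert $\pm\mathbf{p}_I$: the piece $b(\mathbf{u}_I - \mathbf{u}_h,\mathbf{p}_I - \mathbf{p})$ is bounded by $\norm{\mathbf{u}_I - \mathbf{u}_h}_{\mathbf{V}}\norm{\mathbf{p}_I - \mathbf{p}}_{\mathbf{Q}/\mathbf{H}}$ thanks to $(\mathbf{p}_I)_0 = \mathbf{p}_0$, whereas $b(\mathbf{u}_I - \mathbf{u}_h,\mathbf{p}_h - \mathbf{p}_I)$ is treated through the triangle inequality together with Proposition \ref{prop:pressure_estimate}, so that $C_p(h)$ enters $C_2(h)$; the kernel component of $\mathbf{p}_h - \mathbf{p}_I$ is controlled via the second equations of \eqref{eq:contact_problem}--\eqref{eq:Discrete_problem} and the coercivity constant $1/\max\{\lambda^1,\lambda^2\}$ of $c_\lambda$, which explains the $1/\lambda_{min}$ factors in $C_2(h)$. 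Term $T_3$ is immediate from the definition of $\norm{\cdot}_{\mathbf{V}'}$. For $T_4$ I would split $\sigma_n = (\sigma_n - \psi_h) + \psi_h$: the first piece is bounded by $C_{\Gamma_C}\norm{\sigma_n - \psi_h}_{H^{1/2}_{00}(\Gamma_C)'}\norm{\mathbf{u}_I - \mathbf{u}_h}_{\mathbf{V}}$ through \eqref{eq:Ctrace}, while the second becomes the non-absorbable contribution $\int_{\Gamma_C}\psi_h(\llbracket u_{I,n}\rrbracket - \llbracket u_{h,n}\rrbracket)\,d\Gamma$ appearing in the statement.

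The proof is closed by applying Young's inequality to every cross-term of the form $X\cdot\norm{\mathbf{u}_I - \mathbf{u}_h}_{\mathbf{V}}$ with weight proportional to $\asdownh\alpha$, so that the resulting squared factors can be reabsorbed into the coercive left-hand side; this generates the prefactors $8/(\asdownh\alpha)$, $3/(\asdownh\alpha)$ and $3C_{\Gamma_C}^2/(\asdownh\alpha)$ of $C(h)$, together with the $4/(\asdownh\alpha)$ multiplying the contact remainder. A final triangle inequality $\norm{\mathbf{u} - \mathbf{u}_h}_{\mathbf{V}}^2 \le 2\norm{\mathbf{u} - \mathbf{u}_I}_{\mathbf{V}}^2 + 2\norm{\mathbf{u}_I - \mathbf{u}_h}_{\mathbf{V}}^2$ transfers the estimate from $\mathbf{u}_I - \mathbf{u}_h$ to $\mathbf{u} - \mathbf{u}_h$ and yields the claimed inequality.

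The main obstacle I expect is the careful bookkeeping of all the $h$-dependent constants ($\asuph$, $\beta_h$, $C_p(h)$, $\lambda_{min}$, $C_{\Gamma_C}$), which must be gathered precisely into the compact forms $C_1(h)$, $C_2(h)$, $C(h)$ with the exact numerical prefactors stated. A secondary subtlety arises in $T_2$: since $\mathbf{u}_I - \mathbf{u}_h$ does not lie in $\mathbf{W}_h$, the kernel component of $\mathbf{p}_h - \mathbf{p}_I$ cannot be dropped for free, and one has to route the argument through the two divergence equations and the coercivity of $c_\lambda$ in order to reduce everything to the quotient norm — this is precisely the mechanism introducing the $1/\lambda_{min}$ dependence into $C_2(h)$.
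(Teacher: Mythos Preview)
Your proposal is correct and follows the paper's proof essentially step by step: same coercivity start on $\boldsymbol{\delta}_h=\mathbf{u}_I-\mathbf{u}_h$, same four-term split (the paper's $R_1,R_2,R_3,R_4$ are your $T_1,T_4,T_2,T_3$), same $\pm\mathbf{u}_\pi$ and $\sigma_n=(\sigma_n-\psi_h)+\psi_h$ manipulations, and same Young/triangle closing. The only point where the paper is more explicit than your sketch is $R_3=T_2$: there one inserts $\pm\mathbf{u}$ \emph{in addition to} $\pm\mathbf{p}_I$, so that the two divergence equations convert $b(\mathbf{u}-\mathbf{u}_h,\mathbf{p}_h-\mathbf{p}_I)$ into $-c_\lambda(\mathbf{p}_h-\mathbf{p},\mathbf{p}_h-\mathbf{p}_I)$, the negative square $-c_\lambda(\mathbf{p}_h-\mathbf{p},\mathbf{p}_h-\mathbf{p})\le 0$ is then dropped, and the remaining cross term $-c_\lambda(\bar{\mathbf{p}}_h-\bar{\mathbf{p}},\bar{\mathbf{p}}-\bar{\mathbf{p}}_I)$ produces the $1/\lambda_{min}$ factor via the \emph{continuity} of $c_\lambda$ --- this sign argument is exactly what resolves the ``kernel subtlety'' you anticipate in your last paragraph, without a separate estimate of $(\mathbf{p}_h-\mathbf{p}_I)_0$.
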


\begin{proof}
Let $({\bf u},{\bf p}) \in {\bf K} \times {\bf Q}$ and $({\bf u}_h,{\bf p}_h) \in {\bf K}_h\times {\bf Q}_h$ be the solutions of the continuous and discrete problems, respectively. Let ${\bf u}_I \in {\bf K}_h$ and we set $\boldsymbol{\delta}_h:={\bf u}_I -{\bf u}_h$. Finally, let ${\bf u}_\pi$ be a piecewise polynomial function, i.e. ${\bf u}_{\pi|K} \in [\Pol_k(K)]^2$ for all $K \in \mathcal{T}_h$. Thus, using standard technique we have:
\begin{equation}\label{eq:delta-est}
\begin{aligned}
    \asdownh &\alpha \norm{\boldsymbol{\delta}_h}_{1,\Omega }^2 \leq a_h(\boldsymbol{\delta}_h,\boldsymbol{\delta}_h) =  a_h({\bf u}_I,\boldsymbol{\delta}_h) - a_h({\bf u}_h,\boldsymbol{\delta}_h) \\ 
    & \leq \sum_{K \in \mathcal{T}_h} \Big( a^{K}_h({\bf u}_I -{\bf u}_\pi,\boldsymbol{\delta}_h) +a^{K}({\bf u}_\pi-{\bf u},\boldsymbol{\delta}_h) \Big) + \Big( a({\bf u},\boldsymbol{\delta}_h)+ b(\boldsymbol{\delta}_h,{\bf p})-F(\boldsymbol{\delta}_h) \Big)\\
    & + \Big( b(\boldsymbol{\delta}_h,{\bf p}_h)- b(\boldsymbol{\delta}_h,{\bf p}) \Big)
    + \Big(F(\boldsymbol{\delta}_h)- F_h(\boldsymbol{\delta}_h)\Big)=: R_1+R_2+R_3+R_4.
\end{aligned}
\end{equation}
Now, we are going to estimate $R_1,R_2,R_3$ and $R_4$ separately. \\
\par 
\emph{Estimate of $R_1$.} Using local continuity of the bilinear forms
\begin{equation*}
    R_1 \leq  \sum_{K \in \mathcal{T}_h}\left( 2\mu_{K}\asihK \abs{{\bf u}_I -{\bf u}_\pi}_{1,K} \abs{\boldsymbol{\delta}_h}_{1,K} + 2\mu_{K}\abs{{\bf u}_\pi -{\bf u}}_{1,K} \abs{\boldsymbol{\delta}_h}_{1,K}\right) .
\end{equation*}
By applying Cauchy-Schwarz and triangle inequalities, we have
\begin{align*}
    R_1 &\leq  2\mu_{max}\asih \abs{{\bf u}_I -{\bf u}_\pi}_{1,\Omega,h} \abs{\boldsymbol{\delta}_h}_{1,\Omega} + 2\mu_{max}\abs{{\bf u}_\pi -{\bf u}}_{1,\Omega,h}\abs{\boldsymbol{\delta}_h}_{1,\Omega}\\
    &\leq 4\mu_{max} \max\{\asuph,1\} \abs{\boldsymbol{\delta}_h}_{1,\Omega} \big(\abs{{\bf u}_I -{\bf u}}_{1,\Omega} + \abs{{\bf u}_\pi -{\bf u}}_{1,\Omega,h} \big).
\end{align*}
Finally, by Young's inequality, we obtain
\begin{equation} \label{eq:R1}
    R_1 \leq \frac{1}{12} \asdownh \alpha \norm{\boldsymbol{\delta}_h}_{{\bf V}}^2 + C_1(h)(\norm{{\bf u}_I -{\bf u}}_{{\bf V}}^2 + \abs{{\bf u}_\pi -{\bf u}}_{1,\Omega,h}^2),
\end{equation}
where
\begin{equation} \label{eq:constant_R1}
    C_1(h):=\frac{96}{\asdownh \alpha}\mu_{max}^2\max\big\{(\asuph)^2,1\big\}.
\end{equation}
\par
\emph{Estimate of $R_2$}. (The treatment of this term follows the lines detailed in \cite{Belgacem_Renard_Slimane} and \cite{Belhachmi_BenBelgacem}). It is straightforward to check that $R_2$ can be written as
\begin{equation*}
    R_2 = a({\bf u},\boldsymbol{\delta}_h) + b(\boldsymbol{\delta}_h,{\bf p})-F(\boldsymbol{\delta}_h) = \int_{\Gamma_C} \sigma_n \big( \llbracket u_{I,n} \rrbracket - \llbracket u_{h,n} \rrbracket \big) d\Gamma.
\end{equation*}
Let $\psi_h \in L^2(\Gamma_C)$ such that $\psi_{h|e^c_l} \in \Pol_0(e^c_l)$ for all $0 \le l \le l^{\star}-1$. By adding and subtracting $\psi_h$, we get:
\begin{equation*}
    R_2 = \int_{\Gamma_C} (\sigma_n -\psi_h) \big( \llbracket u_{I,n} \rrbracket - \llbracket u_{h,n} \rrbracket \big) d\Gamma + \int_{\Gamma_C} \psi_h \big( \llbracket u_{I,n} \rrbracket - \llbracket u_{h,n} \rrbracket \big) d\Gamma.
\end{equation*}
Now, we are going to bound the first term. 
By estimate \eqref{eq:Ctrace} and Young's inequality, we have
\begin{align*}
    \int_{\Gamma_C} (\sigma_n -\psi_h) \big( \llbracket u_{I,n} \rrbracket - \llbracket u_{h,n} \rrbracket \big) d\Gamma &\leq C_{\Gamma_C} \norm{\sigma_n - \psi_h}_{H^{1/2}_{00}(\Gamma_C)'} \norm{\boldsymbol{\delta}_h}_{{1,\Omega}} \\
    &\leq \frac{1}{12}\asdownh \alpha \norm{\boldsymbol{\delta}_h}_{{1,\Omega}}^2 + \frac{3C_{\Gamma_C}^2}{\asdownh \alpha} \norm{\sigma_n - \psi_h}_{H^{1/2}_{00}(\Gamma_C)'}^2.
\end{align*}
Thus
\begin{equation} \label{eq:R2}
    R_2 \le \frac{1}{12}\asdownh \alpha \norm{\boldsymbol{\delta}_h}_{{\bf V}}^2 + \frac{3C_{\Gamma_C}^2}{\asdownh \alpha} \norm{\sigma_n - \psi_h}_{H^{1/2}_{00}(\Gamma_C)'}^2 + \int_{\Gamma_C} \psi_h \big( \llbracket u_{I,n} \rrbracket - \llbracket u_{h,n} \rrbracket \big) d\Gamma.
\end{equation}
\par
\emph{Estimate of $R_3$}. For any ${\bf p}_I \in {\bf Q}_h$ such that ${({\bf p}_{I})}_0={\bf p}_0$, we have (adding and subtracting ${\bf p}_I$ and ${\bf u}$, and using \eqref{eq:Discrete_problem} and \eqref{eq:contact_problem})
\begin{align*}
    R_3 =  b(\boldsymbol{\delta}_h,{\bf p}_h)- b(\boldsymbol{\delta}_h,{\bf p}) = \big( b({\bf u}_I-{\bf u},{\bf p}_h - {\bf p}_I )  - b(\boldsymbol{\delta}_h, {\bf p} - {\bf p}_I ) \big) - c_\lambda ({\bf p}_h-{\bf p},{\bf p}_h-{\bf p}_I)=: I +II
\end{align*}
We bound $I$ and $II$ separately. For $I$ we easily have
\begin{align*}
    I &= b({\bf u}_I-{\bf u},{\bf p}_h- {\bf p}_I) - b(\boldsymbol{\delta}_h,{\bf p}- {\bf p}_I)\leq \norm{{\bf u}_I-{\bf u}}_{{1,\Omega}}\norm{ {\bf p}_h- {\bf p}_I}_{{\bf Q}/{\bf H}} + \norm{\boldsymbol{\delta}_h}_{{1,\Omega}} \norm{{\bf p}- {\bf p}_I}_{{\bf Q}/{\bf H}}.
\end{align*}
By applying estimate (\ref{eq:estimate_pressure_intermidiate}) and triangle inequality, we obtain
\begin{equation}\label{eq:R3I}
\begin{split}
    I &\leq {\tilde C}_p(h) \big(\norm{F-F_h}_{{\bf V}'} + \norm{\boldsymbol{\delta}_h}_{{1,\Omega}} + \norm{{\bf u}-{\bf u}_I}_{{1,\Omega}} + \abs{{\bf u}-{\bf u}_\pi}_{1,\Omega,h} +\\
    &+\norm{{\bf p}_I - {\bf p}}_{{\bf Q}/{\bf H}}\big)\norm{{\bf u}_I-{\bf u}}_{{1,\Omega}}+\norm{\boldsymbol{\delta}_h}_{{1,\Omega}} \norm{{\bf p}_I - {\bf p}}_{{\bf Q}/{\bf H}}.
\end{split}
\end{equation}
For $II$ we get (adding and subtracting ${\bf p}$, and recalling ${({\bf p}_{I})}_0={\bf p}_0$, the definition of $c_\lambda(\cdot,\cdot)$ and $\lambda_{min}=\min{(\lambda^1,\lambda^2)}$)
\begin{align*}
    II &= - c_\lambda ({\bf p}_h-{\bf p},{\bf p}_h-{\bf p}_I) 
    = - c_\lambda({\bf p}_h-{\bf p},{\bf p}_h-{\bf p})  - c_\lambda({\bf p}_h-{\bf p},{\bf p}-{\bf p}_I)  \\
    &\leq  - c_\lambda({\bf p}_h-{\bf p},{\bf p}-{\bf p}_I) = - c_\lambda (\bar {\bf p}_h-\bar {\bf p},\bar {\bf p}-\bar {\bf p}_I) \leq \frac{1}{\lambda_{min}}\norm{{\bf p}_h-{\bf p}}_{{\bf Q}/{\bf H}} \norm{{\bf p}-{\bf p}_I}_{{\bf Q}/{\bf H}}.
\end{align*}
Using Proposition \ref{prop:pressure_estimate} and triangle inequality, we get
\begin{equation}\label{eq:R3II}
    \begin{split}
    II &\leq\frac{1}{\lambda_{min}}C_p(h) \big(\norm{\boldsymbol{\delta}_h}_{{1,\Omega}}+\norm{{\bf u} - {\bf u}_I}_{{1,\Omega}} + \abs{{\bf u}-{\bf u}_\pi}_{1,\Omega,h} + \\
    &+\norm{{\bf p}-{\bf p}_I}_{{\bf Q}/{\bf H}} + \norm{F-F_h}_{{\bf V}'} \big)\norm{{\bf p}-{\bf p}_I}_{{\bf Q}/{\bf H}}.
    \end{split}
\end{equation}
Thus, combining (\ref{eq:R3I}) and (\ref{eq:R3II}), applying Young's inequality suitably, and observing that $C_p(h) \geq \max\{1,\tilde C_p(h)\}$, we have
\begin{equation} \label{eq:R3}
    R_3 \leq \frac{3}{12} \asdownh\alpha \norm{\boldsymbol{\delta}_h}_{{\bf V}}^2 + C_2(h) \big(\norm{{\bf u}-{\bf u}_I}_{{\bf V}}^2 + \abs{{\bf u} - {\bf u}_\pi}_{1,\Omega,h}^2 + \norm{{\bf p}- {\bf p}_I}_{{\bf Q}/{\bf H}}^2 + \norm{F-F_h}_{\bf V'}^2 \big),
\end{equation}
where
\begin{equation}
    C_2(h):=\frac{1}{2}C_p(h)^2 \max\big\{ 5+\frac{1}{\lambda_{min}}+\frac{6}{\asdownh\alpha},1+\frac{5}{\lambda_{min}}+\frac{6}{\asdownh\alpha}(1+\frac{1}{\lambda_{min}^2})\big\}.
\end{equation}
\par
\emph{Estimate of $R_4$}. By continuity, we have
\begin{equation*}
    R_4 =  F(\boldsymbol{\delta}_h)- F_h(\boldsymbol{\delta}_h) 
    \leq \norm{F-F_h}_{{\bf V}'} \norm{{\boldsymbol{\delta}}_h}_{{\bf V}}.
\end{equation*}
Consequently, by applying Young's inequality, we get
\begin{equation} \label{eq:R4}
    R_4 \leq \frac{1}{12} \asdownh\alpha\norm{\boldsymbol{\delta}_h}^2_{{\bf V}}+\frac{3}{\asdownh\alpha}\norm{F-F_h}_{{\bf V}'}^2.
\end{equation}
Combining (\ref{eq:R1}), (\ref{eq:R2}), (\ref{eq:R3}) and (\ref{eq:R4}), from \eqref{eq:delta-est} we infer
\begin{equation} \label{eq:partial_estimate}
    \begin{split}
        \frac{1}{2}\asdownh \alpha \norm{\boldsymbol{\delta}_h}_{{\bf V}}^2 &\leq C_3(h) \big(\norm{{\bf u}-{\bf u}_I}_{{\bf V}}^2 + \abs{{\bf u} - {\bf u}_\pi}_{1,\Omega,h}^2 + \norm{{\bf p}- {\bf p}_I}_{{\bf Q}/{\bf H}}^2 + \norm{F-F_h}_{\bf V'}^2 \\
        &+ \norm{\sigma_n - \psi_h}_{H^{1/2}_{00}(\Gamma_C)'}^2\big)
        + \int_{\Gamma_C} \psi_h \big( \llbracket u_{I,n} \rrbracket - \llbracket u_{h,n} \rrbracket \big) d\Gamma,
    \end{split}
\end{equation}
where
\begin{equation}
    C_3(h):=2\max\big\{ C_1(h), C_2(h), \frac{3}{\asdownh\alpha}, \frac{3C_{\Gamma_C}^2}{\asdownh\alpha} \big\}.
\end{equation}
From (\ref{eq:partial_estimate}) and triangle inequality, the statement follows.
\end{proof}

Finally, combining Propositions \ref{prop:pressure_estimate} and \ref{prop:displacement_estimate}, we have the following convergence result for the contact problem.
\begin{theorem} \label{teo:abstract_convergence_result}
    Let $({\bf u},{\bf p}) \in {\bf K}\times {\bf Q}$ and $({\bf u}_h,{\bf p}_h) \in {\bf K}_h \times {\bf Q}_h$ be the solutions of the continuous and discrete problems, respectively. Then, for every ${\bf u}_I \in {\bf K}_h$, ${\bf p}_I \in {\bf Q}_h$ such that $({p}^i_{I})_0=p^i_0$, $F_h \in {\bf V}_h'$, ${\bf u}_{\pi}=({\bf u}^1_{\pi},{\bf u}^2_{\pi})$ such that ${\bf u}^i_{\pi|K}\in[\Pol_k(K)]^2$ and $\psi_h \in L^2(\Gamma_C)$ such that $\psi_{h|e^c_l} \in \Pol_0(e^c_l)$ for all $0 \le l \le l^{\star}-1$, it holds
    \begin{equation*}
    \begin{split}
        \norm{{\bf u}-{\bf u}_h}_{{\bf V}}^2 + \norm{{\bf p}-{\bf p}_h}_{{\bf Q}/{\bf H}}^2&\leq C_{fin}(h) \Big( \int_{\Gamma_C} \psi_h \big( \llbracket u_{I,n} \rrbracket - \llbracket u_{h,n} \rrbracket \big) \;d\Gamma + \norm{{\bf u}-{\bf u}_I}_{{\bf V}}^2 + \\
        &+\abs{{\bf u} - {\bf u}_\pi}_{1,\Omega,h}^2 + \norm{{\bf p}- {\bf p}_I}_{{\bf Q}/{\bf H}}^2 + \norm{F-F_h}_{\bf V'}^2 + \norm{\sigma_n -\psi_h}^2_{H^{1/2}_{00}(\Gamma_C)'}\Big).
    \end{split}
    \end{equation*}
    The constant $C_{fin}(h)$ is given by
    \begin{equation} \label{eq:abstract_constant}
        C_{fin}(h):=2 \max\big\{ 8C_p(h)^2,\frac{4}{\asdownh\alpha}(1+8C_p(h)^2), (1+8C_p(h)^2) C(h)\big\},
    \end{equation}
    where $C_p(h)$ and $C(h)$ are defined as in (\ref{eq:constant_pressure}) and (\ref{eq:constant_displacement}), respectively.
\end{theorem}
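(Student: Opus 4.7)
The plan is to obtain the combined bound by simply adding the two estimates already proved in Propositions \ref{prop:pressure_estimate} and \ref{prop:displacement_estimate} and then repackaging the constants. First I would square Proposition \ref{prop:pressure_estimate}: since its right-hand side is a sum of four nonnegative terms, the elementary inequality $(a+b+c+d)^2 \leq 4(a^2+b^2+c^2+d^2)$ yields
\begin{equation*}
    \norm{{\bf p}-{\bf p}_h}_{{\bf Q}/{\bf H}}^2 \leq 4\, C_p(h)^2 \Big( \norm{{\bf u}-{\bf u}_h}_{{\bf V}}^2 + \abs{{\bf u}-{\bf u}_\pi}_{1,\Omega,h}^2 + \norm{{\bf p}-{\bf p}_I}_{{\bf Q}/{\bf H}}^2 + \norm{F-F_h}_{{\bf V}'}^2 \Big).
\end{equation*}

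Next, I would add this to the displacement estimate of Proposition \ref{prop:displacement_estimate} and reabsorb the resulting $\norm{{\bf u}-{\bf u}_h}_{{\bf V}}^2$ on the right by substituting the displacement bound once more, thereby transferring its contribution into the contact integral and into the approximation terms. By collecting like terms, the contact integral $\int_{\Gamma_C}\psi_h(\llbracket u_{I,n}\rrbracket-\llbracket u_{h,n}\rrbracket)\,d\Gamma$ inherits a coefficient of order $\frac{1}{\asdownh\alpha}(1+C_p(h)^2)$, the purely pressure-driven contribution yields a coefficient of order $C_p(h)^2$, and the remaining shared approximation terms $\norm{{\bf u}-{\bf u}_I}_{{\bf V}}^2$, $\abs{{\bf u}-{\bf u}_\pi}_{1,\Omega,h}^2$, $\norm{{\bf p}-{\bf p}_I}_{{\bf Q}/{\bf H}}^2$, $\norm{F-F_h}_{{\bf V}'}^2$, $\norm{\sigma_n-\psi_h}_{H^{1/2}_{00}(\Gamma_C)'}^2$ carry a coefficient of order $C(h)(1+C_p(h)^2)$. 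Defining $C_{fin}(h)$ as a common upper bound of these three coefficients, together with an overall factor of $2$ coming from a Young-type splitting used during the reabsorption, reproduces exactly the expression \eqref{eq:abstract_constant}.

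There is no substantive obstacle in this argument: all the analytic work has already been carried out in the two propositions. The only delicate point is the careful bookkeeping of the explicit $h$-dependence, since the quantities $\asdownh$, $C_p(h)$ and $C(h)$ must be tracked because they may degenerate (only logarithmically, as it turns out) in the presence of ``small edges''. This is precisely the information that will be exploited in Section \ref{sec:quadratic-conv} to establish the full convergence rates, so keeping every constant explicit is the true point of the statement rather than a source of difficulty.
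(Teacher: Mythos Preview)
Your proposal is correct and follows exactly the route the paper takes: the paper itself provides no detailed proof of Theorem~\ref{teo:abstract_convergence_result}, stating only that it follows ``combining Propositions~\ref{prop:pressure_estimate} and~\ref{prop:displacement_estimate}'', which is precisely the substitution-and-bookkeeping argument you outline. The only minor caveat is that your use of $(a+b+c+d)^2\le 4(a^2+b^2+c^2+d^2)$ gives a factor $4C_p(h)^2$ rather than the $8C_p(h)^2$ appearing in \eqref{eq:abstract_constant}, so reproducing the constant \emph{exactly} as written requires a slightly different splitting; but the structure and $h$-dependence of $C_{fin}(h)$ that you describe are correct, and that is what matters for the subsequent analysis.
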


\begin{remark}\label{rm:incompr}
    The constant $C_{fin}(h)$ in the Theorem \ref{teo:abstract_convergence_result} depends on the Lamé coefficients. However, it remains bounded as the incompressibility parameters $\lambda^i$ increase because it depends only on $1/\lambda_{min}$. Therefore, no volumetric locking effect appears for nearly incompressible materials.
\end{remark}

\begin{remark}\label{rm:quantities}
    Given the material properties, the quantity $C_{fin}(h)$ in the Theorem \ref{teo:abstract_convergence_result} depends on the discrete scheme through the quantities $\aish$ (see \eqref{eq:stability} and \eqref{eq:constant_discrete_global_coercivity}), $\asuph$ (see \eqref{eq:stability} and \eqref{eq:constant_continuity}) and $\beta_h$ (see \eqref{eq:discrete_inf_sup}). 
\end{remark}
\section{Two examples of Virtual Element Approximation}\label{sec:VEMexamples}
In this section, we detail a couple of specific choices of the spaces ${\bf V}_h$ and ${\bf Q}_h$, together with a suitable convex subset ${\bf K}_h$ and discrete forms. We set
\begin{equation}\label{eq:m123}
    m_1(x,y):=1\ , \quad m_2(x,y):= \frac{{x}-{x}_K}{h_K}\ ,\quad m_3(x,y):= \frac{{y}-{y}_K}{h_K},
\end{equation}
where $(x_K,y_K)$ denotes the centroid of the element $K$.
\subsection{A first order element (\texorpdfstring{$k=1$}{k=1}): local spaces}\label{sss:first}
On each element $K \in \mathcal{T}_h^i$, let us consider the following Stokes-like local virtual element space. First, consider the problem: find ${\bf v}$ and $s$ such that
\begin{equation} \label{eq:Stokes_element_system_lo}
\begin{cases}
    -{\bf \Delta v} - \nabla s = {\bf{0}} \quad \text{in } K, \\
    \text{div }{\bf v} \in \Pol_0(K), \\
    {\bf v}_{|\partial K} \in [C^0(\partial K)]^2, \quad ({\bf v}\cdot{\bf t}_e)_{|e} \in \Pol_1(e),\quad ({\bf v}\cdot{\bf n}_e)_{|e} \in \Pol_2(e) \text{ for all } e \in \partial K,
\end{cases}
\end{equation}
Above ${\bf t}_e$ and ${\bf n}_e$ are the tangential and normal vectors to the edge $e$, respectively, and 
all the equations are intended in a distributional sense. Then, we set

\begin{equation*}
{\bf V}_{h|K}^i:= \{ {\bf v} \in [H^1(K)]^2 : {\bf v} \text{ satisfies  (\ref{eq:Stokes_element_system_lo}) for some $s \in L^2(K)$ }\}.
\end{equation*}

Given ${\bf v} \in {\bf V}_{h|K}^i$, the associated set of local degrees of freedom $\Xi$ is given by (see Fig. \ref{fig:k1}):
\begin{itemize}
    \item the vector values $\Xi_v({\bf v})$ of ${\bf v}$ at the vertices of the polygon $K$,
    \item the value $\Xi_e({\bf v})$ of ${\bf v}\cdot{\bf n}$ at the middle point of each edge $e \in \partial K$.
\end{itemize}
Moreover, on each $K \in \mathcal{T}_h^i$, we consider the local polynomial space
\begin{equation*}
    Q_{h|K}^i:=\Pol_0(K),
\end{equation*}
whose local degree of freedom $D_Q$ is given by 
    \[
    D_{Q}(q):=\frac{1}{\abs{K}}\int_{K} q \;dx .
    \]

\subsection{A quadratic element (\texorpdfstring{$k=2$}{k=2}): local spaces}\label{sss:second}
On each element $K \in \mathcal{T}_h^i$, let us consider the following Stokes-like local virtual element space (see \cite{BLV:2017}). First, consider the problem: find ${\bf v}$ and $s$ such that
\begin{equation} \label{eq:Stokes_element_system}
\begin{cases}
    -{\bf \Delta v} - \nabla s = {\bf{0}} \quad \text{in } K, \\
    \text{div }{\bf v} \in \Pol_1(K), \\
    {\bf v}_{|\partial K} \in [C^0(\partial K)]^2, \quad {\bf v}_{|e} \in [\Pol_2(K)]^2 \text{ for all } e \in \partial K,
\end{cases}
\end{equation}
with all the equations to be intended in a distributional sense. Then, we set
\begin{equation*}
{\bf V}_{h|K}^i:= \{ {\bf v} \in [H^1(K)]^2 : {\bf v} \text{ satisfies  (\ref{eq:Stokes_element_system}) for some $s \in L^2(K)$ }\}.
\end{equation*}

Given ${\bf v} \in {\bf V}_{h|K}^i$, the associated set of local degrees of freedom $\Xi$ (divided into boundary ones $\Xi^\partial$ and internal ones $\Xi^\circ$) is given by (see Fig. \ref{fig:k2}):
\begin{itemize}
    \item the vector values $\Xi^\partial_v({\bf v})$ of ${\bf v}$ at the vertices of the polygon $K$,
    \item the vector values $\Xi^\partial_e({\bf v})$ of ${\bf v}$ at the middle point of each edge $e \in \partial K$,
    \item the internal moments of $\text{div}\; \bf v$ against the scaled polynomial basis $\{ m_j \}_{j =2,3}$ of $\Pol_1(K)/\R$, \emph{i.e.}
    \begin{equation}\label{eq:divDoFs}
    \Xi^\circ_j({\bf v}):=\frac{h_K}{\abs{K}}\int_{K} (\text{div}\; {\bf v}) m_j \; dx \quad j=2,3.
     \end{equation}
\end{itemize}
Moreover, on each $K \in \mathcal{T}_h^i$, we consider the local polynomial space
\begin{equation*}
    Q_{h|K}^i:=\Pol_1(K),
\end{equation*}
whose local set of degrees of freedom $D_Q$ is given by the internal moments against the scaled polynomial basis $\{ m_j \}_{j=1,2,3}$ of $\Pol_1(K)$, \emph{i.e.}
    \[
    D_{Q,j}(q):=\frac{1}{\abs{K}}\int_{K} q \: m_j \;dx \quad j=1,2,3.
    \]

\begin{figure}[htbp]
    \centering
    \begin{subfigure}[t]{0.48\textwidth}
        \centering
        \includegraphics[width=\linewidth]{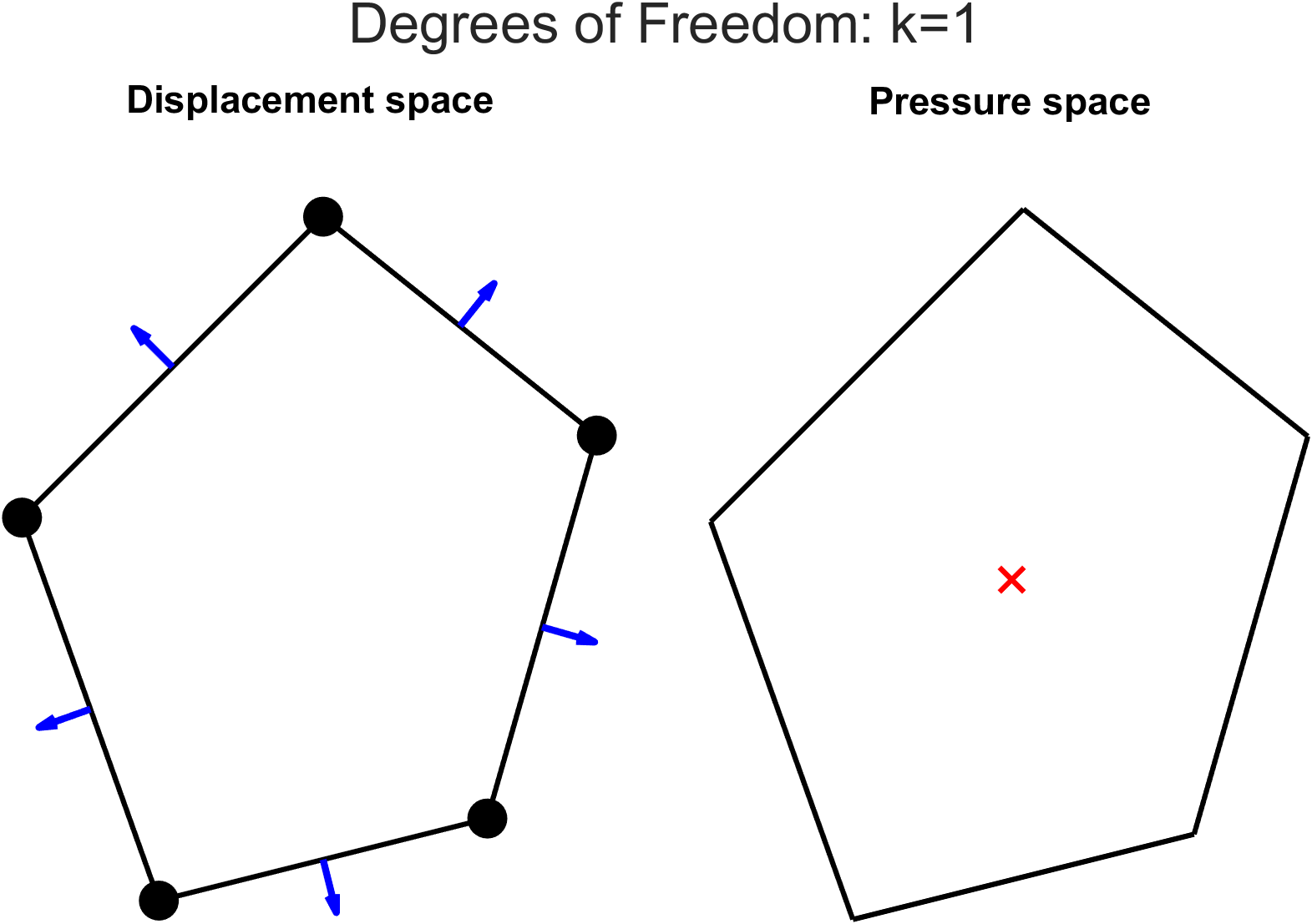}
        \caption{We denote $\Xi_v$ with the black dots, $\Xi_e$ with the blue arrows and $D_{Q}$ with red crosses inside the element.}
        \label{fig:k1}
    \end{subfigure}
    \hfill
    \begin{subfigure}[t]{0.48\textwidth}
        \centering
        \includegraphics[width=\linewidth]{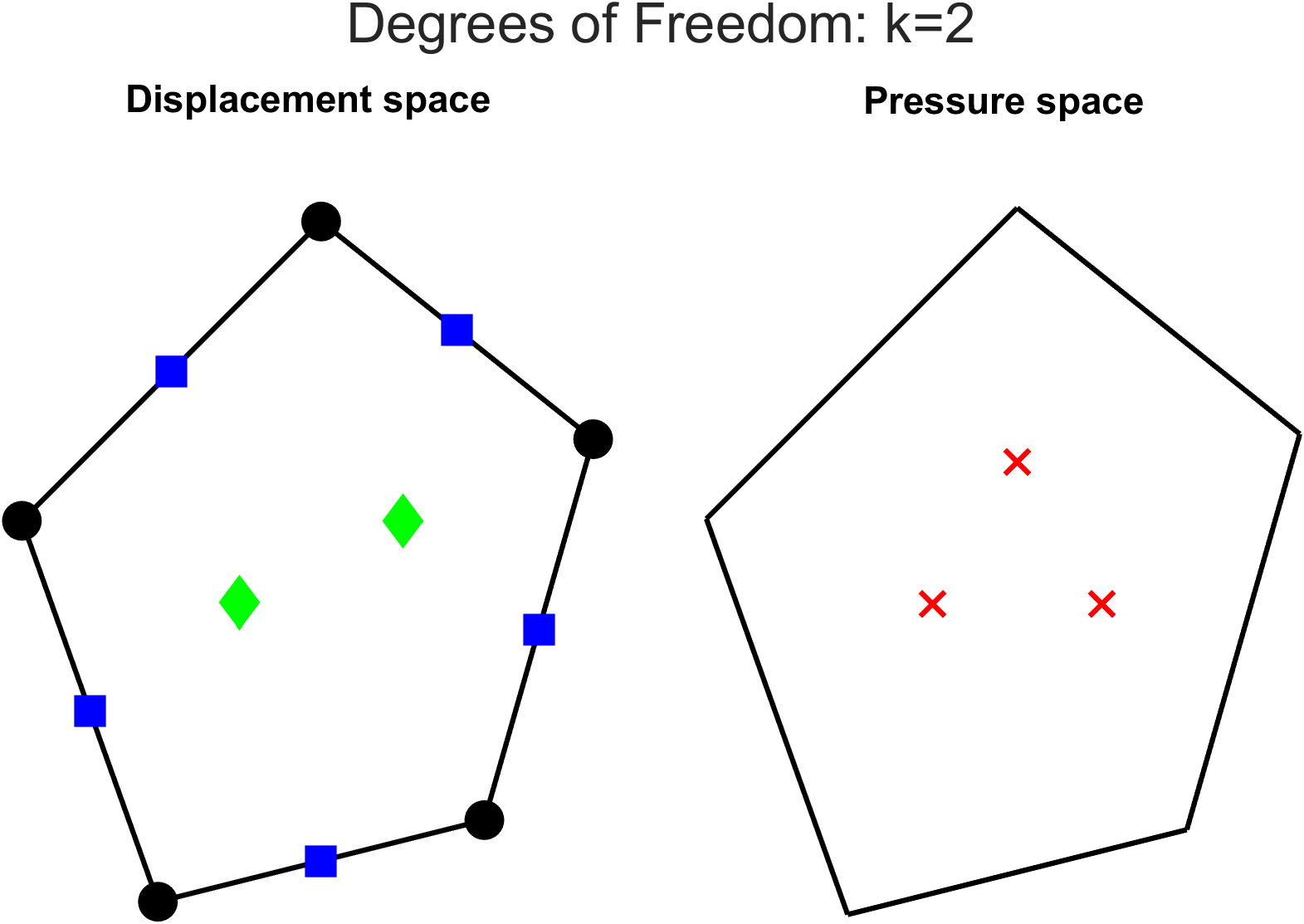}
        \caption{We denote $\Xi^\partial_v$ with the black dots, $\Xi^\partial_e$ with the blue squares, $\Xi^\circ_j$ with the internal green diamonds and $D_{Q,j}$ with the internal crosses.}
        \label{fig:k2}
    \end{subfigure}
    \caption{Degrees of freedom for $k=1$ and $k=2$.}
    \label{fig:dofs}
\end{figure}

\subsection{Spaces and forms}
For both the elements in sections \ref{sss:first} and \ref{sss:second}, the global virtual element spaces (for $i=1,2$) are
\begin{equation*}
    {\bf V}_h^i:=\{{\bf v}_h^i \in {\bf V}^i : {\bf v}_{h|K}^i \in {\bf V}_{h|K}^i \quad \text{for all } K \in \mathcal{T}_h^i \},
\end{equation*}
and
\begin{equation*}
    Q_h^i:=\{ q_h^i \in L^2(\Omega^i) : q_{h|K}^i \in Q_{h|K}^i \quad \text{ for all } K \in \mathcal{T}_h^i \},
\end{equation*}
with the obvious associated set of degrees of freedom. In addition, we set
\begin{equation} \label{eq:Stokes_like_global_spaces}
    {\bf V}_h:= {\bf V}_h^1 \times {\bf V}_h^2, \qquad {\bf Q}_h:=Q_{h}^1 \times Q_h^2.
\end{equation}

We remark that, from the definition of the local VEM spaces, it follows immediately
\begin{equation*}
    \text{div}\; {\bf V}^i_h \subseteq Q^i_h.
\end{equation*}

The construction of the local discrete bilinear form $a^{K}_h(\cdot,\cdot)$ is carried out by following the abstract construction of Section \ref{sec:VEM_contact}. Here, however, we provide a specific choice of both the local energy projection and the stabilizing bilinear form. The energy projection operator $\Pi_2^{\varepsilon,K}$ is defined as in (\ref{eq:energy_projection_operator}). To fix the rigid body motion of the projection, we need to specify the operators $P_j^{K}$ for $j=1,2,3$. Observing that the space $RM(K)$ of rigid body motions over the element $K$ can be written as (cf. \eqref{eq:m123})
\begin{equation*}
    RM(K) = \text{span } \Big\{ 
    \begin{pmatrix}
        m_1 \\
        0
    \end{pmatrix},
    \begin{pmatrix}
        0 \\
        m_1
    \end{pmatrix},
    \begin{pmatrix}
        -m_3 \\
        m_2
    \end{pmatrix}
    \Big\},
\end{equation*}
we define $\forall {\bf v} \in [H^1(K)]^2$
\begin{equation*}
    P_1^{K}({\bf v}):= \frac{1}{\abs{\partial K}}\int_{\partial K} {\bf v} \cdot  \begin{pmatrix}
        m_1 \\
        0
    \end{pmatrix} d\Gamma, 
    \quad P_2^{K}({\bf v}):= \frac{1}{\abs{\partial K}}\int_{\partial K} {\bf v} \cdot  \begin{pmatrix}
        0 \\
        m_1
    \end{pmatrix} d\Gamma,
\end{equation*}
\begin{equation*}
    \quad P_3^{K}({\bf v}):= \frac{1}{\abs{\partial K}}\int_{\partial K} {\bf v} \cdot  \begin{pmatrix}
        -m_3 \\
        m_2
    \end{pmatrix} d\Gamma.
\end{equation*}
The local stabilizing term $S^{K}(\cdot,\cdot)$, cf. \eqref{eq:abstr_ah}, is chosen as the classical \emph{DoFi-DoFi} bilinear form (see \cite{volley}), i.e.:
\begin{equation} \label{eq:dofi_dofi}
    S^{K}({\bf v}_h,{\bf w}_h) = \sum_{j} \Xi_j({\bf v}_h) \Xi_j ({\bf w}_h) \quad \forall {\bf v}_h,{\bf w}_h \in {\bf V}_{h|K}.
\end{equation}
The closed and convex subset ${\bf K}_h \subset {\bf V}_h$ is inspired by the choice made in \cite{Belgacem_Renard_Slimane}. In fact, the discrete contact conditions are imposed by enforcing the non-positivity of the normal discrete gap $\llbracket u_{h,n} \rrbracket - g_0$ at the vertices of the decomposition on $\Gamma_C$ and the non-positivity of its integral along the contact edges. More precisely, ${\bf K}_h$ is given by
\begin{equation} \label{eq:Stokes_like_convex_subset}
    {\bf K}_h:=\{ {\bf v}_h \in {\bf V}_h : (\llbracket v_{h,n} \rrbracket -g_0)({\bf x}_l^c) \leq 0 \quad \forall l=0,\cdots,l^\star, \: \int_{e^c_l} (\llbracket v_{h,n} \rrbracket -g_0) \; d\Gamma \leq 0 \quad \forall l=0,\cdots,l^{\star}-1 \}.
\end{equation}
We remark that choice \eqref{eq:Stokes_like_convex_subset} is suitable for both the linear and the quadratic elements of subsections \ref{sss:first} and \ref{sss:second}.  
Moreover, the subspace ${\bf W}_h \subset {\bf K}_h \cap {\bf W}$ is chosen as 
\begin{equation} \label{eq:Stokes_like_subspace_infsup}
{\bf W}_h={\bf W}_h^1 \times {\bf W}_h^2\quad \text{ with }\quad    {\bf W}_h^i :=\{{\bf v}_h \in {\bf V}_h^i : {\bf v}_{h|\Gamma_C} = {\bf 0} \} \text{ for } i=1,2.
\end{equation}
This subspace satisfies the discrete inf-sup condition (\ref{eq:discrete_inf_sup}). We defer the proof to section \ref{sssec:infsup-constants}. 

Finally, we provide a computable approximation $F_h \in {\bf V}_h'$ of the right-hand side. For any $K \in \mathcal{T}_h$, let ${\boldsymbol{\Pi}}^{0,K}_0 : [L^2(K)]^2 \to [\Pol_0(K)]^2$ be the $L^2$-projections onto constants. Then, we define
\begin{equation}\label{eq:fh1}
    {\bf f}^i_{h|K} := {\boldsymbol{\Pi}}^{0,K}_0 {\bf f}^i \quad \forall K \in \mathcal{T}_h^i.
\end{equation}
We set 
\begin{equation}\label{eq:fh2}
    \langle F_h,{\bf v}_h \rangle:=\sum_{i=1}^2 \langle{\bf f}_h^i,{\bf v}_h^i \rangle =\sum_{i=1}^2\sum_{K \in \mathcal{T}_h^i} \int_K {\bf f}_h^i \cdot {\bf v}_h^i \;dx .
\end{equation}
It's worth noting that $\langle F_h,{\bf v}_h \rangle$ is explicitly computable through the degrees of freedom (see \cite{BLV:2017} for details).

\section{Convergence analysis of the quadratic virtual element}\label{sec:quadratic-conv}
In this section, we provide the convergence analysis focusing on the quadratic element described in section \ref{sss:second}. For the first-order one, see section \ref{sss:first}, similar results can be obtained using analogous and slightly simplified arguments. 
The analysis takes advantage of the abstract framework of section \ref{ss:abstract_convergence}.

\subsection{Evaluation of the stability constants}\label{ssec:stab-constants}
We first show the validity of the discrete inf-sup condition (\ref{eq:discrete_inf_sup}) and the stability property (\ref{eq:stability}) under the assumptions (A1) - (A2) on the polygonal decompositions. 
As we have seen, both properties are essential for the analysis of the discrete problem. Special attention is given to the behavior of the constants $\aish$, $\asuph$ and $\beta_h$ as they contribute to determine the convergence order in Theorem \ref{teo:abstract_convergence_result} (see Remark \ref{rm:quantities}).
\subsubsection{Evaluating the continuity and coercivity constants \texorpdfstring{$\alpha_h^{\star}$}{αh*} and \texorpdfstring{$\alpha_{\star,h}$}{α*h}}\label{sssec:alpha-constants}
We begin by proving the following lemma.
\begin{lemma}\label{lm:fourier}
Assume that assumption (A1) holds true and fix $M>0$. Then, for every $M\geq\varepsilon>0$ and every $v\in H^{1/2+\varepsilon}(\partial K)$ we have
\begin{equation}\label{eq:fourier}
    || v ||_{\infty,\partial K}\lesssim \varepsilon^{-1/2} || v ||_{1/2+\varepsilon,\partial K} .    
\end{equation}
\end{lemma}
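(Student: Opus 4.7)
I would prove Lemma~\ref{lm:fourier} via a Fourier-series analysis on the closed curve $\partial K$, reducing to the sub-critical one-dimensional Sobolev embedding $H^{1/2+\varepsilon}\hookrightarrow L^\infty$ and carefully tracking the degeneration of the embedding constant as $\varepsilon\to 0^+$.

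\paragraph{Step 1 (arc-length parametrization).} Under assumption (A1), $\partial K$ is a closed Lipschitz curve whose length $L_K$ satisfies $L_K\simeq h_K$, with constants depending only on the shape-regularity parameter $\gamma$. Parametrize $\partial K$ by arc length and identify $v$ with a periodic function on the $1$-dimensional torus $\mathbb{T}_{L_K}$ of length $L_K$.

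\paragraph{Step 2 (Fourier expansion).} Expand $v$ in the orthonormal basis $\{L_K^{-1/2} e^{2\pi i n s/L_K}\}_{n\in\Z}$ of $L^2(\partial K)$, with coefficients $c_n$. Parseval's identity together with the standard equivalence between the Slobodeckij and Fourier-weighted norms for $0<s<1$ gives
\begin{equation*}
    \|v\|_{0,\partial K}^2 \;=\; \sum_{n\in\Z} |c_n|^2,
    \qquad
    \|v\|_{1/2+\varepsilon,\partial K}^2 \;\simeq\; \sum_{n\in\Z}\bigl(1 + (n/L_K)^{1+2\varepsilon}\bigr)\,|c_n|^2,
\end{equation*}
with equivalence constants uniform in $L_K$ for $\varepsilon\in (0,M]$.

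\paragraph{Step 3 (pointwise $+$ Cauchy-Schwarz).} A direct pointwise bound followed by Cauchy-Schwarz in Fourier space yields
\begin{equation*}
    \|v\|_{\infty,\partial K} \;\le\; L_K^{-1/2} \sum_{n\in\Z}|c_n|
    \;\le\; L_K^{-1/2}\Bigl(\sum_{n\in\Z}\bigl(1+(n/L_K)^{1+2\varepsilon}\bigr)^{-1}\Bigr)^{1/2} \|v\|_{1/2+\varepsilon,\partial K}.
\end{equation*}

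\paragraph{Step 4 (sum estimate).} The weight sum is controlled by splitting between bulk ($|n|\le L_K$) and tail ($|n|>L_K$), and comparing the tail with the integral $\int_\R(1+|u|^{1+2\varepsilon})^{-1}\,du$. Using $\int_1^\infty u^{-1-2\varepsilon}\,du = (2\varepsilon)^{-1}$, one obtains
\begin{equation*}
    \sum_{n\in\Z}\bigl(1+(n/L_K)^{1+2\varepsilon}\bigr)^{-1} \;\lesssim\; L_K/\varepsilon.
\end{equation*}
Inserting this into Step~3 and using $L_K^{-1/2}\cdot\sqrt{L_K/\varepsilon} = \varepsilon^{-1/2}$ gives \eqref{eq:fourier}.

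\paragraph{Main obstacle.} The crux is the uniform (in $L_K$, hence in $h_K$) control of the Fourier weight sum in Step~4: one must show that the $L_K$-dependence produced by the sum cancels exactly the $L_K^{-1/2}$ prefactor coming from the $L^\infty\le L_K^{-1/2}\|\cdot\|_{\ell^1}$ bound, leaving only the claimed $\varepsilon^{-1/2}$ blow-up. This requires a careful Riemann-sum comparison and the use of $\varepsilon\le M$ both to keep the Slobodeckij/Fourier norm equivalence constants uniform and to bound the residual tail integral independently of $\varepsilon$. Once this scaling bookkeeping is handled, the remainder of the argument is the classical Sobolev embedding in one dimension.
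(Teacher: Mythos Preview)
Your strategy is exactly the paper's: Fourier expansion on the closed curve, Cauchy--Schwarz against the weights $(1+|\cdot|^2)^{1/2+\varepsilon}$, and control of the resulting weight sum by comparison with $\int_1^\infty t^{-1-2\varepsilon}\,dt=(2\varepsilon)^{-1}$. The only organizational difference is that the paper first invokes assumption~(A1) to reduce the statement to the unit circle $S^1$ and runs the Fourier argument there, so no length parameter ever appears, whereas you work directly on $\mathbb{T}_{L_K}$ and carry $L_K$ through Steps~2--4.

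One caution on your Step~4: the claimed bound $\sum_{n\in\Z}\bigl(1+(n/L_K)^{1+2\varepsilon}\bigr)^{-1}\lesssim L_K/\varepsilon$ with constant independent of $L_K$ fails when $L_K$ is small, since the $n=0$ term alone contributes $1$ while $L_K/\varepsilon$ can be made arbitrarily small (take $\varepsilon=M$ fixed and $L_K\to 0$). This is precisely the scaling obstruction that the paper's reduction to $S^1$ sidesteps. The cleanest fix is to rescale $\mathbb{T}_{L_K}$ to the unit torus at the outset and absorb the length into the norm equivalence furnished by~(A1); after that your Steps~2--4 go through verbatim with $L_K=1$, and you recover the paper's argument.
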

\begin{proof}
Due to assumption (A1), it suffices to prove estimate \eqref{eq:fourier} for $S^1=\partial B({\bf 0},1)\subset \C$ (i.e.: we identify $\C$ with $\R^2$). Then, given $v\in H^{1/2+\varepsilon}(S^1)$, we consider the Fourier expansion $v(x)= \sum_{n\in \Z} \widehat v_n e^{inx}$, where
$\widehat v_n = \frac{1}{2\pi }\int_{0}^{2\pi} v(\xi)e^{-in\xi}d\xi$. Since $v\in H^{1/2+\varepsilon}(S^1)$, we have
\begin{equation}\label{eq:fourier2}
\sum_{n\in\Z} (1+n^2)^{1/2+\varepsilon} |\widehat v_n|^2 \lesssim || v ||_{1/2+\varepsilon,S^1}^2 < +\infty\, 
\quad \text{ and } \quad  v\in C^0(S^1)  .
\end{equation}
Therefore, for every $x\in S^1$, it holds, setting $x=e^{i\varphi}$ with $\varphi\in [0,2\pi)$:
\begin{equation}\label{eq:fourier3}
\begin{aligned}
|v(x)| = |\sum_{n\in \Z} \widehat v_n e^{in\varphi}|&\leq \sum_{n\in \Z} |\widehat v_n| =
\sum_{n\in \Z} \frac{1}{(1+n^2)^{1/4+\varepsilon/2}}(1+n^2)^{1/4+\varepsilon/2}|\widehat v_n|\\ 
&\leq \left( \sum_{n\in \Z} \frac{1}{(1+n^2)^{1/2+\varepsilon}}\right)^{1/2} \left(\sum_{n\in\Z} (1+n^2)^{1/2+\varepsilon} |\widehat v_n|^2\right)^{1/2}\\
&\lesssim \left( \sum_{n\in \Z} \frac{1}{(1+n^2)^{1/2+\varepsilon}}\right)^{1/2}  || v ||_{1/2+\varepsilon,S^1}.
\end{aligned}
\end{equation}
Since for $M\geq \varepsilon>0$ we have
$$
 \sum_{n\in \Z} \frac{1}{(1+n^2)^{1/2+\varepsilon}}\lesssim \int_0^{+\infty}\frac{1}{(1+t^2)^{1/2+\varepsilon}} dt
 \lesssim \int_1^{+\infty}\frac{1}{t^{1+2\varepsilon}} dt = \frac{1}{2\varepsilon},
$$
from \eqref{eq:fourier3} we get
\begin{equation}\label{eq:fourier4}
|| v ||_{\infty,S^1}\lesssim \varepsilon^{-1/2} || v ||_{1/2+\varepsilon,S^1}.
\end{equation}
\end{proof}

The following result concerns the stability property of the DoFi-DoFi stabilization term. 
\begin{proposition}
    Let assumptions (A1) and (A2) hold true. Then
    \begin{equation}\label{eq:coerc_1}
        S^{K}({\bf v}_h,{\bf v}_h) \lesssim C(h) \; a^{K} ({\bf v}_h,{\bf v}_h) \quad \forall {\bf v}_h \in {\bf V}_{h|K} \text{ s.t. } \Pi^{\varepsilon,K}_2 {\bf v}_h = {\bf 0},
    \end{equation}
    \begin{equation}\label{eq:coerc_2}
        a^{K} ({\bf v}_h,{\bf v}_h) \lesssim C(h) \; S^{K}({\bf v}_h,{\bf v}_h) \quad \forall {\bf v}_h \in {\bf V}_{h|K},
    \end{equation}
    where 
    \begin{equation*}
        C(h):= \max_{K \in \mathcal{T}_h} \log(1+\frac{h_K}{h_{m(K)}}),     
    \end{equation*}
with $h_{m(K)}$ defined in \eqref{eq:hmK}.

If the stronger assumption (A3) holds, then clearly $C(h) \simeq 1$.
\end{proposition}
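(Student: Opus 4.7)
The strategy is to establish the two norm equivalences separately, tracking carefully how the smallest edge of $K$ enters the estimates. The local space for $k=2$ is of Stokes type, and its elements are uniquely characterized by the vertex and midpoint boundary values together with the two internal divergence moments $\Xi^\circ_j$. Up to scaling and the choice of a small parameter $\varepsilon>0$, the only non-standard tool needed is Lemma \ref{lm:fourier}, used to convert $L^\infty$ boundary evaluations into $H^{1/2+\varepsilon}$ norms.

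For \eqref{eq:coerc_1} the plan is to bound each DoF separately by $|{\bf v}_h|_{1,K}$. The hypothesis $\Pi_2^{\varepsilon,K}{\bf v}_h = {\bf 0}$ fixes the rigid body motions, so a Poincar\'e--Korn-type argument on the star-shaped element $K$ gives $\|{\bf v}_h\|_{0,K} \lesssim h_K |{\bf v}_h|_{1,K}$. The internal divergence moments $\Xi^\circ_j$ are handled by Cauchy--Schwarz, yielding $|\Xi^\circ_j({\bf v}_h)| \lesssim \|\mathrm{div}\,{\bf v}_h\|_{0,K} \lesssim |{\bf v}_h|_{1,K}$ with no degeneracy. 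For each vertex or midpoint evaluation $v_h(P)$, I would invoke Lemma \ref{lm:fourier} to obtain $|v_h(P)|^2 \le \|{\bf v}_h\|_{\infty,\partial K}^2 \lesssim \varepsilon^{-1}\|{\bf v}_h\|_{1/2+\varepsilon,\partial K}^2$. Since ${\bf v}_{h|\partial K}$ is a continuous piecewise polynomial of degree at most two, an edgewise inverse inequality combined with the continuous trace estimate yields
\[
\|{\bf v}_h\|_{1/2+\varepsilon,\partial K}^2 \lesssim h_{m(K)}^{-2\varepsilon}\,\|{\bf v}_h\|_{1/2,\partial K}^2 \lesssim h_{m(K)}^{-2\varepsilon}\,\|{\bf v}_h\|_{1,K}^2.
\]
Choosing $\varepsilon = 1/\log(1+h_K/h_{m(K)})$ balances the two factors: $h_{m(K)}^{-2\varepsilon}\lesssim 1$ while $\varepsilon^{-1} \simeq \log(1+h_K/h_{m(K)})$, which produces exactly the claimed $C(h)$ after summing over the uniformly bounded (by (A2)) number of DoFs.

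For \eqref{eq:coerc_2} I would exploit the Stokes-like problem \eqref{eq:Stokes_element_system} defining ${\bf V}_{h|K}$ to characterize ${\bf v}_h$ as the minimum-energy extension of its boundary trace subject to the prescribed interior divergence moments. Splitting ${\bf v}_h$ into this extension plus an internal bubble lifting the divergence data on the star-shaped $K$ leads to
\[
|{\bf v}_h|_{1,K}^2 \lesssim \|{\bf v}_{h|\partial K}\|_{1/2,\partial K}^2 + \sum_{j=2,3}|\Xi^\circ_j({\bf v}_h)|^2.
\]
On each edge $e\in\partial K$ the restriction ${\bf v}_h|_e$ is a polynomial determined by three nodal values and satisfies $\|{\bf v}_h\|_{1/2,e}^2 \lesssim \sum |\text{nodal DoFs on }e|^2$ with constants depending only on (A1)--(A2). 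The delicate point is patching these edgewise $H^{1/2}$ contributions into a global $H^{1/2}(\partial K)$ bound: the derivative mismatch at a vertex shared by a long and a short edge introduces a correction term that, again via the Fourier embedding of Lemma \ref{lm:fourier} applied on the reference circle, contributes the logarithmic factor $\log(1+h_K/h_{m(K)})$.

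The main obstacle will be the uniform optimization over $K$ of the Sobolev exponent $\varepsilon$ together with the rigorous justification of the inverse inequality $\|\cdot\|_{1/2+\varepsilon,\partial K}\lesssim h_{m(K)}^{-\varepsilon}\|\cdot\|_{1/2,\partial K}$ for boundary-piecewise polynomials in the presence of arbitrarily small edges. All remaining ingredients (Stokes extension, Cauchy--Schwarz, standard edgewise trace and inverse estimates) are routine, and when (A3) holds the smallest edge is proportional to $h_K$, so $\log(1+h_K/h_{m(K)}) \simeq 1$ and the final simplification $C(h) \simeq 1$ is immediate.
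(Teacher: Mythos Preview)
Your argument for \eqref{eq:coerc_1} is correct and is essentially the paper's proof: both use Lemma~\ref{lm:fourier} to pass from $L^\infty(\partial K)$ to $H^{1/2+\varepsilon}(\partial K)$, then a 1D inverse estimate to reach $H^{1/2}(\partial K)$, and finally the choice $\varepsilon=[\log(1+h_K/h_{m(K)})]^{-1}$ to extract the logarithm. The paper organises the last step slightly differently, using the zero boundary mean implied by $\Pi_2^{\varepsilon,K}{\bf v}_h={\bf 0}$ to go directly from $\|{\bf v}_h\|_{1/2,\partial K}$ to $|{\bf v}_h|_{1,K}$ and then to $a^K$, rather than passing through a Poincar\'e--Korn inequality on $K$; but the content is the same.

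For \eqref{eq:coerc_2} (which the paper omits, referring to \cite{BLR:2017}) your architecture is right---Stokes extension plus bubble, then $|{\bf v}_h|_{1,K}\lesssim \|{\bf v}_h\|_{1/2,\partial K}+\text{(internal DoFs)}$, then control of the boundary $H^{1/2}$-norm by nodal values---but the tool you name for the patching step is wrong. Lemma~\ref{lm:fourier} bounds $\|\cdot\|_{\infty,\partial K}$ \emph{by} $\|\cdot\|_{1/2+\varepsilon,\partial K}$; here you need the opposite direction, namely to bound $\|{\bf v}_h\|_{1/2,\partial K}$ by point values. The log factor does not come from Fourier embedding but from an explicit estimate of the $H^{1/2}(\partial K)$-seminorm of the piecewise-linear/quadratic Lagrange basis functions $\varphi_j$: one has $|\varphi_j|_{1/2,\partial K}\lesssim[\log(1+h_K/h_{m(K)})]^{1/2}$ (this is exactly the estimate \eqref{eq:basis-log} used later in the paper, taken from \cite{BLR:2017}), and writing ${\bf v}_{h|\partial K}=\sum_j {\bf v}_h(\mathcal N_j)\varphi_j$ then gives the bound with the correct logarithmic constant. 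So replace the Fourier step in your sketch of \eqref{eq:coerc_2} by this hat-function estimate.
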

\begin{proof}
We omit the proof of estimate \eqref{eq:coerc_2} since it uses arguments similar to Lemma 6.6 and Proposition 4.3 in \cite{BLR:2017}. Hence, let us consider estimate \eqref{eq:coerc_1} and choose ${\bf v}_h \in {\bf V}_{h|K}$ such that $\Pi^{\varepsilon,K}_2 {\bf v}_h = {\bf 0}$. Using a vectorial version of Lemma \ref{lm:fourier} and an inverse estimate for 1D polynomials, for $\varepsilon>0$ we get
\begin{equation}\label{eq:coerc_1.1}
|| {\bf v}_h ||_{\infty,\partial K} \lesssim  \varepsilon^{-1/2} || {\bf v}_h ||_{1/2+\varepsilon,\partial K}\lesssim \varepsilon^{-1/2} h_{m(K)}^{-\varepsilon}|| {\bf v}_h ||_{1/2,\partial K}   . 
\end{equation}
Choosing $\varepsilon = \left[\log(1+\frac{h_K}{h_{m(K)}})\right]^{-1}$, from \eqref{eq:coerc_1.1} we obtain
\begin{equation}\label{eq:coerc_1.2}
|| {\bf v}_h ||_{\infty,\partial K} \lesssim \left[\log(1+\frac{h_K}{h_{m(K)}})\right]^{1/2} || {\bf v}_h ||_{1/2,\partial K}   . 
\end{equation}
We now observe that it holds (cf. also \eqref{eq:divDoFs} and \eqref{eq:dofi_dofi}):
\begin{equation}\label{eq:coerc_1.3}
 S^{K}({\bf v}_h,{\bf v}_h) \lesssim || {\bf v}_h ||_{\infty,\partial K}^2 + || {\rm div}\,  {\bf v}_h ||_{0,K}^2 
 \lesssim  || {\bf v}_h ||_{\infty,\partial K}^2 + | {\bf v}_h |_{1,K}^2 . 
\end{equation}
Since $\Pi^{\varepsilon,K}_2 {\bf v}_h = {\bf 0}$, the function ${\bf v}_h $ has zero mean value on the boundary $\partial K$. Therefore, using also a trace estimate, it holds:
\begin{equation}\label{eq:coerc_1.4}
| {\bf v}_h |_{1,K}^2\lesssim a^{K} ({\bf v}_h,{\bf v}_h) \qquad \text{and} \qquad 
|| {\bf v}_h ||_{1/2,\partial K}^2 \lesssim | {\bf v}_h |_{1/2,\partial K}^2 \lesssim | {\bf v}_h |_{1,K}^2\lesssim a^{K} ({\bf v}_h,{\bf v}_h)  . 
\end{equation}
Taking advantage of \eqref{eq:coerc_1.2} and \eqref{eq:coerc_1.4}, from estimate \eqref{eq:coerc_1.3} we get
\begin{equation}\label{eq:coerc_1.5}
 S^{K}({\bf v}_h,{\bf v}_h) \lesssim \log(1+\frac{h_K}{h_{m(K)}}) a^{K} ({\bf v}_h,{\bf v}_h)  . 
\end{equation}
Then estimate \eqref{eq:coerc_1} immediately follows. 
\end{proof}

The next result follows immediately, showing the behavior of the stability constants.
\begin{corollary}
    Let assumptions (A1) and (A2) hold true. Then
    \begin{equation*}
        \alpha_h^{\star} \simeq \max_{K \in \mathcal{T}_h} \log(1+\frac{h_K}{h_{m(K)}}), \qquad \alpha_{\star,h} \simeq \frac{1}{\max_{K \in \mathcal{T}_h} \log(1+\frac{h_K}{h_{m(K)}})},
    \end{equation*}
    where the hidden constants are independent of the mesh size. If the stronger assumption (A3) holds, then clearly $1\lesssim\alpha_{\star,h} \lesssim \alpha_h^{\star} \lesssim 1 $.
\end{corollary}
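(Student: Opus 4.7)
The plan is to combine the two stability bounds \eqref{eq:coerc_1}--\eqref{eq:coerc_2} of the preceding Proposition with the definition \eqref{eq:abstr_ah} of $a_h^K$ and the orthogonality of the energy projection $\Pi := \Pi_2^{\varepsilon,K}$. From the first line of \eqref{eq:energy_projection_operator}, choosing ${\bf z}_k = \Pi {\bf v}_h$, we obtain $a^K(\Pi {\bf v}_h, (I-\Pi){\bf v}_h) = 0$, which yields the Pythagoras-type decomposition $a^K({\bf v}_h, {\bf v}_h) = a^K(\Pi {\bf v}_h, \Pi {\bf v}_h) + a^K((I-\Pi){\bf v}_h, (I-\Pi){\bf v}_h)$.

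Since $\Pi$ is a projection (it fixes $[\Pol_2(K)]^2$, so $\Pi^2 = \Pi$), the function $(I-\Pi){\bf v}_h$ satisfies $\Pi (I-\Pi){\bf v}_h = 0$, and therefore estimate \eqref{eq:coerc_1} of the preceding Proposition applies to it. Inserting this into \eqref{eq:abstr_ah} and exploiting the Pythagoras identity, I get, with $C(h_K) := \log(1 + h_K/h_{m(K)})$, the bound $a_h^K({\bf v}_h,{\bf v}_h) \lesssim \max\{1,C(h_K)\}\, a^K({\bf v}_h,{\bf v}_h)$. Symmetrically, estimate \eqref{eq:coerc_2} applied to $(I-\Pi){\bf v}_h$ together with \eqref{eq:abstr_ah} produces $a^K({\bf v}_h, {\bf v}_h) \lesssim \max\{1,C(h_K)\}\, a_h^K({\bf v}_h, {\bf v}_h)$.

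The stability characterization \eqref{eq:stability} identifies $\aishK$ and $\asihK$ as the best constants in the two-sided estimate, so the above inequalities give $\asihK \lesssim C(h_K)$ and $\aishK \gtrsim 1/C(h_K)$ on each element $K$. Taking the maximum and minimum over $K \in \mathcal{T}_h$ as in \eqref{eq:constant_continuity} and \eqref{eq:constant_discrete_global_coercivity} yields the stated bounds on $\asuph$ and $\aish$; moreover, under the stronger assumption (A3) the inequality $h_{m(K)} \geq \eta h_K$ forces $C(h_K) \lesssim 1$, which immediately gives $\asuph \simeq \aish \simeq 1$.

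The main obstacle is the sharpness implicit in the two $\simeq$: strictly speaking, what the argument above delivers are the one-sided bounds $\asuph \lesssim \max_K C(h_K)$ and $\aish \gtrsim 1/\max_K C(h_K)$, while the matching reverse inequalities would require exhibiting virtual functions concentrated near a short edge that actually saturate the logarithmic factor appearing in the DoFi-DoFi stabilization. Since only these upper and lower bounds enter the convergence Theorem \ref{teo:abstract_convergence_result} through Remark \ref{rm:quantities}, the sharpness is more a qualitative statement about tightness than a quantitative need for the applications; the proof can therefore stop at the one-sided bounds or, if desired, complement them with an explicit small-edge construction.
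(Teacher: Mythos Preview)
Your proposal is correct and matches the paper's intent: the paper simply states that the Corollary ``follows immediately'' from the preceding Proposition, and you have spelled out precisely how---via the $a^K$-orthogonality of $\Pi_2^{\varepsilon,K}$, the resulting Pythagoras splitting, and the application of \eqref{eq:coerc_1}--\eqref{eq:coerc_2} to $(I-\Pi){\bf v}_h$. Your remark that the argument only delivers the one-sided bounds $\asuph \lesssim \max_K C(h_K)$ and $\aish \gtrsim 1/\max_K C(h_K)$, and that the full $\simeq$ would require a saturation construction, is a valid and careful observation that the paper does not address; as you note, only these one-sided bounds are actually used downstream.
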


\subsubsection{Evaluating the inf-sup constant \texorpdfstring{$\beta_h$}{βh} }\label{sssec:infsup-constants}
Let us introduce the following finite-dimensional subspace ${\bf V}_{h|K}^{L} \subset {\bf V}_{h|K}$. First, 
on each element $K \in \mathcal{T}_h$, consider the problem: find ${\bf v}$ and $s$ such that
\begin{equation} \label{eq:Stokes_element_system_reduced}
\begin{cases}
    -{\bf \Delta v} - \nabla s = {\bf{0}} \quad \text{in } K, \\
    \text{div }{\bf v} \in \Pol_0(K), \\
    {\bf v}_{|\partial K} \in [C^0(\partial K)]^2, \quad {\bf v}_{|e} \in [\Pol_2(e)]^2 \text{ for all } e \in \partial K,
\end{cases}
\end{equation}
with all the equations to be intended in a distributional sense. Then, we set
\begin{equation}\label{eq:VLhK}
{\bf V}_{h|K}^{L}:= \{ {\bf v} \in {\bf V}_{h|K} :
{\bf v} \text{ satisfies  (\ref{eq:Stokes_element_system_reduced}) for some $s \in L^2(K)$ } \},
\end{equation}
i.e. ${\bf V}_{h|K}^{L}$ is the subspace of ${\bf V}_{h|K}$ of displacements, whose divergence is constant, instead of linear.
We observe that, due to the compatibility conditions, the constant divergence is fully determined by the boundary data. The following lemma is a consequence of the well-posedness of 
Problem \eqref{eq:Stokes_element_system_reduced} and the mesh assumption (A1).  
\begin{lemma} \label{lemma:stability_Stokes_semi_norm}
    Under assumption (A1), for any ${\bf v} \in {\bf V}_{h|K}^{L}$ it holds
    \begin{equation*}
        \norm{{\bf v}}_{1,K} \lesssim \norm{{\bf v}}_{1/2,\partial K} \quad \text{and} \quad\abs{{\bf v}}_{1,K} \lesssim \abs{{\bf v}}_{1/2,\partial K},
    \end{equation*}
    where the hidden constant is independent of $K$.
\end{lemma}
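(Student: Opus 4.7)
The plan is to recognize any ${\bf v}\in {\bf V}_{h|K}^{L}$ as the velocity component of a Stokes-type boundary value problem on $K$, with prescribed Dirichlet trace ${\bf v}_b:={\bf v}_{|\partial K}$ and constant divergence $c_0 = |K|^{-1}\int_{\partial K}{\bf v}_b\cdot{\bf n}\,d\Gamma$ fixed by compatibility. The estimate will then follow from a classical energy argument based on a suitably stable, divergence-preserving lifting of the boundary datum.

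The first (and main) step is to construct ${\bf w}\in [H^1(K)]^2$ such that ${\bf w}_{|\partial K}={\bf v}_b$, $\text{div}\,{\bf w}=c_0$ in $K$, and
\[
\norm{{\bf w}}_{1,K}\lesssim \norm{{\bf v}_b}_{1/2,\partial K},
\]
with a hidden constant depending only on the star-shapedness parameter $\gamma$ of (A1). Such a lifting is obtained by composing a stable trace extension from $H^{1/2}(\partial K)$ into $[H^1(K)]^2$ with a Bogovskii-type right inverse of the divergence; both operators admit uniform continuity constants on star-shaped domains (see, e.g., \cite{Girault_Raviart}). Ensuring uniformity of these constants over the whole mesh family is exactly where assumption (A1) is essentially used.

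Once ${\bf w}$ is available, I would test the local Stokes system $-\Delta{\bf v}-\nabla s={\bf 0}$ against ${\bf v}-{\bf w}\in [H_0^1(K)]^2$, which is divergence-free by construction. Integration by parts cancels the pressure term, yielding
\[
\abs{{\bf v}}_{1,K}^2 \;=\; \int_K \nabla {\bf v} : \nabla {\bf w}\,dx \;\le\; \abs{{\bf v}}_{1,K}\abs{{\bf w}}_{1,K},
\]
so that $\abs{{\bf v}}_{1,K}\lesssim \norm{{\bf v}_b}_{1/2,\partial K}$. The full norm bound $\norm{{\bf v}}_{1,K}\lesssim \norm{{\bf v}_b}_{1/2,\partial K}$ follows from a standard trace/Poincar\'e estimate on the star-shaped element $K$, using $\norm{{\bf v}_b}_{0,\partial K}\le \norm{{\bf v}_b}_{1/2,\partial K}$ to control the $L^2(K)$ part of ${\bf v}$.

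Finally, to upgrade the norm estimate to the seminorm version, I would exploit translation invariance: replacing ${\bf v}$ by ${\bf v}-\bar{\bf v}$, where $\bar{\bf v}$ is the mean value of ${\bf v}$ on $\partial K$, does not change $\text{div}\,{\bf v}$ nor the Stokes system satisfied by ${\bf v}$, so the full-norm estimate applies to ${\bf v}-\bar{\bf v}$. Combining this with the boundary Poincar\'e-type inequality $\norm{{\bf v}-\bar{\bf v}}_{1/2,\partial K}\lesssim \abs{{\bf v}}_{1/2,\partial K}$ gives $\abs{{\bf v}}_{1,K}=\abs{{\bf v}-\bar{\bf v}}_{1,K}\lesssim \abs{{\bf v}}_{1/2,\partial K}$. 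The only genuine obstacle throughout the argument is the construction of the divergence-preserving lifting with a constant uniform under (A1); once it is in place, the remaining manipulations are standard.
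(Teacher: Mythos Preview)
Your proposal is correct and aligns with the paper's own treatment: the paper does not spell out a proof but simply states that the lemma ``is a consequence of the well-posedness of Problem~\eqref{eq:Stokes_element_system_reduced} and the mesh assumption (A1)'', and your argument is precisely the standard way to unpack that well-posedness---a divergence-preserving lifting (extension plus Bogovskii, both with constants depending only on the star-shapedness parameter $\gamma$) followed by the energy identity obtained by testing the Stokes system against ${\bf v}-{\bf w}$. The translation-invariance trick you use to pass from the full-norm estimate to the seminorm estimate is also the natural step and is implicitly covered by the paper's appeal to Stokes stability.
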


We also introduce the global $H^1$-conforming space
\begin{equation}\label{eq:VWLh}
\begin{aligned}
    {\bf W}_{h}^{L}&:=\{ {\bf v} \in {\bf W}_h : {\bf v}_{|K} \in {\bf V}_{h|K}^{L} \quad \forall K \in \mathcal{T}_h \} \subset {\bf W}_h.
\end{aligned}    
\end{equation}
Since (\ref{eq:continuous_inf_sup}) holds, we prove the discrete inf-sup condition using Fortin's trick (see \cite{Boffi_Brezzi_Fortin}). 
The construction of the Fortin operator can be divided into three different steps:
\begin{enumerate}
    \item Construction of $\bar\Pi: {\bf W} \to {\bf W}_h^{L}$ such that
    \begin{equation*}
        \begin{cases}
            b(\bar\Pi{\bf v}-{\bf v},\bar{q}_h)=0 \quad \forall {\bf v}\in {\bf W}, \;\forall \bar{q}_h \text{ piecewise constant in } \mathcal{T}_h, \\
            \norm{\bar{\Pi}{\bf v}}_{{1,\Omega}} \leq C_{\bar\Pi}(h) \norm{{\bf v}}_{{1,\Omega}} \quad \forall {\bf v}\in {\bf W}.
        \end{cases}
    \end{equation*}
    \item Construction of $\widetilde{\Pi}:{\bf W} \to {\bf W}_h$ such that
    \begin{equation*}
        \begin{cases}
            b(\widetilde\Pi{\bf v},{q}_h)=b({\bf v}-\bar\Pi{\bf v},{q}_h) \quad \forall {\bf v}\in {\bf W}, \forall {q}_h \in Q_h, \\
            \norm{\widetilde{\Pi}{\bf v}}_{{1,\Omega}} \leq C \norm{{\bf v}-\bar\Pi{\bf v}}_{{1,\Omega}} \quad \forall {\bf v}\in {\bf W}.
        \end{cases}
    \end{equation*}
    \item Construction of the Fortin operator $\Pi: {\bf W} \to {\bf W}_h$ given by
    \begin{equation*}
        \Pi{\bf v}:=(\bar{\Pi} + \widetilde{\Pi}) {\bf v}.
    \end{equation*}
\end{enumerate}
Here, we detail only the first step. For details on the second step, see \cite{BLV:2017}. \\
\par
The construction of the operator $\bar\Pi$ requires the introduction of additional notations. We recall that $\mathcal{T}_h$ is the polygonal decomposition of $\Omega=\Omega^1\cup\Omega^2$ stemming from gluing the two meshes $\mathcal{T}_h^1$ and $\mathcal{T}_h^2$ of $\Omega^1$ and $\Omega^2$, respectively. 
We now introduce a quasi-uniform sub-triangulation of the polygonal decomposition $\mathcal{T}_h$, denoted by $\mathcal{T}_{h_m}$, with mesh size $h_m$. We set also ${\bf S}^{1,0}_{h_m}$ to be the linear finite element space over the triangulation $\mathcal{T}_{h_m}$, that is
\begin{equation*}
    {\bf S}^{1,0}_{h_m}:=\{ {\bf v} \in {\bf W} \cap [C^0(\overline\Omega)]^2 : {\bf v}_{|T} \in [\Pol_1(T)]^2 \quad \forall T \in \mathcal{T}_{h_m} \}.
\end{equation*}
Let us introduce the following operators:
\begin{itemize}
    \item the lowest degree Scott-Zhang interpolation operator (see \cite{ScottZhang} or \cite{brenner-scott:book}, for example), relative to the sub-triangulation $\mathcal{T}_{h_m}$
    \begin{equation*}
        \pi_{SZ}:{\bf W}\to {\bf S}^{1,0}_{h_m}.
    \end{equation*}
    \item the  linear Lagrange VEM interpolation operator relative to the polygonal mesh $\mathcal{T}_h$
    \begin{equation*}
        \pi_L: {\bf W} \cap [C^0(\overline\Omega)]^2\to {\bf W}_h^{L}.
    \end{equation*}
    \item the \say{edge bubble} operator
    \begin{equation}\label{eq:pi_bubble_0}
    \Pi_b: {\bf W}\to {\bf W}_h^{L}.
    \end{equation}
    This operator is defined by gluing local contributions $\Pi_{b,K}$, i.e. $(\Pi_b {\bf w})_{|K}= \Pi_{b,K}{\bf w}_{|K}$ for all ${\bf w}\in{\bf W}$. For ${\bf v} \in [H^1(K)]^2$, $\Pi_{b,K}{\bf v}\in{\bf V}_{h|K}^{L}$ is determined by requiring (cf. \eqref{eq:Stokes_element_system_reduced} and \eqref{eq:VLhK}): 
\begin{equation}\label{eq:pi_bubble}
    \begin{cases}
        \Pi_{b,K}{\bf v} \; (\mathcal{M}_e)=\frac{3}{2} \frac{1}{\abs{e}} \int_e {\bf v} \; d\Gamma, \quad \forall \text{ edge $e$ of $K$, with midpoint $\mathcal{M}_e$},\\
        \Pi_{b,K}{\bf v}\;(\mathcal{V}) = {\bf 0} \quad \forall \text{ vertex $\mathcal{V}$ of $K$}.
    \end{cases}
\end{equation}
We remark that the requirement above implies that $\int_e {\bf w}= \int_e \Pi_{b}{\bf w}$, for all ${\bf w}\in{\bf W}$ and for all edge $e$.
\end{itemize}
We are now ready to introduce the operator $\bar{\Pi}$. We set
\begin{equation}\label{eq:pi_bar_def}
    \bar{\Pi}{\bf v} := \pi_L \pi_{SZ} {\bf v} + \Pi_{b}(I-\pi_L \pi_{SZ}){\bf v} \quad \forall {\bf v} \in {\bf W}.
\end{equation}
We notice that $\bar{\Pi}$ is well-defined and $\bar{\Pi}{\bf v} \in {\bf W}_h^{L}$ for all ${\bf v} \in {\bf W}$.

The following two Lemmata are useful to study the properties of the operator $\bar\Pi$.
\begin{lemma}\label{lm:local_stability_bubble_operator}
    Let $\Pi_{b}$ defined as in \eqref{eq:pi_bubble_0} and \eqref{eq:pi_bubble}. Suppose that assumptions (A1) and (A2) are satisfied, then it holds
    \begin{equation}\label{eq:bubble-est}
        \begin{cases}
        \norm{\Pi_{b} {\bf v}}_{H^{1/2}_{00}(e)} \lesssim h_{e}^{-1/2} \norm{{\bf v}}_{0,e} \quad \forall e \in \partial K,\\
         \norm{\Pi_{b} {\bf v}}_{1/2,\partial K} \lesssim \sum_{e \in \partial K} h_e^{-1/2} \norm{{\bf v}}_{0,e},
        \end{cases}
    \end{equation}
    where the hidden constants are independent of the mesh size.
\end{lemma}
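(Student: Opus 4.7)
The plan is to reduce both bounds to a pointwise analysis edge-by-edge, exploiting the explicit structure that $\Pi_{b,K}{\bf v}$ inherits on each boundary edge from its definition.

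First I would observe that, by the construction of ${\bf V}_{h|K}^{L}$ in \eqref{eq:Stokes_element_system_reduced}, the restriction of any function in ${\bf V}_{h|K}^{L}$ to an edge $e\in\partial K$ lies in $[\Pol_2(e)]^2$. Since by \eqref{eq:pi_bubble} the function $\Pi_{b,K}{\bf v}$ vanishes at both endpoints of $e$ and equals $c_e:=\tfrac{3}{2|e|}\int_e{\bf v}\,d\Gamma$ at the midpoint $\mathcal{M}_e$, we have the explicit identification $(\Pi_{b,K}{\bf v})|_e = c_e\, b_e$, where $b_e$ denotes the standard (scalar) edge bubble on $e$ normalized to take value $1$ at its midpoint and zero at its endpoints. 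A plain Cauchy-Schwarz estimate on the integral defining $c_e$ then gives $|c_e|\le \tfrac{3}{2}\,h_e^{-1/2}\,\|{\bf v}\|_{0,e}$.

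The second step is a pure scaling argument establishing $\|b_e\|_{H^{1/2}_{00}(e)}\lesssim 1$ uniformly in $h_e$: mapping $e$ affinely onto a reference edge $\hat{e}$ of unit length, each ingredient of the $H^{1/2}_{00}$ norm transforms in a scale-invariant (or explicitly controlled) fashion, and the reference edge bubble has a finite absolute $H^{1/2}_{00}(\hat e)$ norm. Combining this uniform bound with the estimate on $|c_e|$ yields the first inequality of \eqref{eq:bubble-est} immediately, since $\|\Pi_{b,K}{\bf v}\|_{H^{1/2}_{00}(e)}=|c_e|\,\|b_e\|_{H^{1/2}_{00}(e)}$.

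For the second inequality I would exploit the vanishing of $\Pi_{b,K}{\bf v}$ at every vertex of $K$ (again by \eqref{eq:pi_bubble}), which allows one to decompose it on $\partial K$ as $\sum_{e\in\partial K}(\Pi_{b,K}{\bf v})|_e$, where each summand is supported on a single edge and has zero trace at both endpoints of that edge. By the very definition of $H^{1/2}_{00}(e)$, such an edge function extended by zero to the remainder of $\partial K$ still belongs to $H^{1/2}(\partial K)$, with $\|\,\cdot\,\|_{1/2,\partial K}\lesssim \|\,\cdot\,\|_{H^{1/2}_{00}(e)}$; a triangle inequality over $\partial K$ (using (A2) to keep the constants independent of $K$) then promotes the edgewise bound to the $H^{1/2}(\partial K)$ bound claimed.

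The main delicate point is the uniform-in-$h_e$ control of $\|b_e\|_{H^{1/2}_{00}(e)}$: one must adopt the convention for the $H^{1/2}_{00}$ norm (for instance, the one using the distance-to-the-endpoints weight) under which the scaling to $\hat e$ is genuinely invariant, since a naive choice of weight introduces a spurious negative power of $h_e$. Assumption (A1) enters only to guarantee a well-behaved affine reference map, while (A2) intervenes only when summing the finitely many edge contributions of $\partial K$.
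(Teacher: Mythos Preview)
Your proposal is correct and follows essentially the same approach as the paper: the paper compresses your explicit scaling argument into a one-line polynomial inverse estimate $\|\Pi_b\mathbf{v}\|_{H^{1/2}_{00}(e)}\lesssim h_e^{-1/2}\|\Pi_b\mathbf{v}\|_{0,e}\lesssim h_e^{-1/2}\|\mathbf{v}\|_{0,e}$, and then uses the identical zero-extension decomposition and equivalence $\|\Pi_b\mathbf{v}\|_{H^{1/2}_{00}(e)}\simeq\|(\Pi_b\mathbf{v})\chi_e\|_{1/2,\partial K}$ for the second bound. One minor correction: assumption (A1) is invoked not for the edge-to-reference affine map (which is purely one-dimensional and needs no hypothesis) but precisely for this last equivalence, since controlling the extension-by-zero norm uniformly requires control of the geometry of $\partial K$.
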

\begin{proof}
A standard 1D inverse estimate and \eqref{eq:pi_bubble} give:
\begin{equation}\label{eq:bubble-est2}
\norm{\Pi_{b} {\bf v}}_{H^{1/2}_{00}(e)}\lesssim h_e^{-1/2}\norm{\Pi_{b} {\bf v}}_{0,e}\lesssim h_e^{-1/2}\norm{ {\bf v}}_{0,e} , 
\end{equation}
i.e. the first estimate in \eqref{eq:bubble-est}.
Now, let us write $\Pi_{b} {\bf v} = \sum_{e\in\partial K} (\Pi_{b} {\bf v})\chi_e$, where $\chi_e:\partial K\to \R$ denotes the characteristic function of the side $e$. Due to assumptions (A1), it holds 
$$
\norm{\Pi_{b} {\bf v}}_{H^{1/2}_{00}(e)}\simeq \norm{\Pi_{b} ({\bf v})\chi_e}_{1/2,\partial K}. 
$$
Therefore, from assumption (A2) and inequality \eqref{eq:bubble-est2} we get
\begin{equation}
\norm{\Pi_{b} {\bf v}}_{1/2,\partial K} =  \norm{\sum_{e\in\partial K} (\Pi_{b} {\bf v})\chi_e}_{1/2,\partial K}\leq
\sum_{e\in\partial K} \norm{ (\Pi_{b} {\bf v})\chi_e}_{1/2,\partial K}\lesssim \sum_{e \in \partial K} h_e^{-1/2} \norm{{\bf v}}_{0,e} ,
\end{equation}
i.e. the second estimate in \eqref{eq:bubble-est}.
\end{proof}

Given $\omega\subset \R^2 $, we denote by $D_\omega$ the "diamond" of $\omega$ composed of triangles, \emph{i.e.}
    \begin{equation*}
        D_\omega:=\text{int} \Big(\bigcup \{\bar{T}' \in \mathcal{T}_{h_m} : \bar{T}' \cap \bar{\omega} \neq \emptyset \} \Big).
    \end{equation*}
\begin{lemma} \label{lemma:Clement_approx_boundary}
    Let ${\bf v} \in {\bf W}$ and consider $\pi_{SZ} {\bf v} \in {\bf S}^{1,0}_{h_m}$. Let $T \in \mathcal{T}_{h_m}$ and $l \in \partial T$ be one of its edges. Then, it holds
    \begin{equation*}
        \norm{(I-\pi_{SZ}){\bf v}}_{0,l} \lesssim h_m^{1/2} \abs{{\bf v}}_{1,D_T}.
    \end{equation*}
\end{lemma}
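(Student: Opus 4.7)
The plan is to combine a standard trace inequality on the element $T$ with the well-known approximation properties of the Scott-Zhang interpolant on the triangulation $\mathcal{T}_{h_m}$. These are the two workhorses and they fit together by a scaling argument; nothing beyond textbook estimates (see, e.g., \cite{ScottZhang}, \cite{brenner-scott:book}) should be needed.

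First I would write the trace inequality on the triangle $T\in\mathcal{T}_{h_m}$. Because $\mathcal{T}_{h_m}$ is a quasi-uniform triangulation with meshsize $h_m$, for any edge $l\subset\partial T$ and any $w\in H^1(T)$ one has
\begin{equation*}
\norm{w}_{0,l}^2 \lesssim h_m^{-1}\norm{w}_{0,T}^2 + h_m\,\abs{w}_{1,T}^2,
\end{equation*}
with constant depending only on the shape-regularity of $\mathcal{T}_{h_m}$. This follows by pulling back to a reference triangle, applying the continuous trace theorem there, and scaling.

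Next I would invoke the standard Scott-Zhang estimates on $T$, which read
\begin{equation*}
\norm{(I-\pi_{SZ}){\bf v}}_{0,T} \lesssim h_m\,\abs{{\bf v}}_{1,D_T}, \qquad
\abs{(I-\pi_{SZ}){\bf v}}_{1,T} \lesssim \abs{{\bf v}}_{1,D_T},
\end{equation*}
for every ${\bf v}\in{\bf W}\subset[H^1(\Omega)]^2$, where the $H^1$-seminorm on the right-hand side is taken over the diamond $D_T$ of $T$ (the standard Scott-Zhang patch).

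Finally I would apply the trace inequality to $w:=(I-\pi_{SZ}){\bf v}\in H^1(T)$ and plug in the two Scott-Zhang estimates:
\begin{equation*}
\norm{(I-\pi_{SZ}){\bf v}}_{0,l}^2 \lesssim h_m^{-1}\bigl(h_m\,\abs{{\bf v}}_{1,D_T}\bigr)^2 + h_m\,\abs{{\bf v}}_{1,D_T}^2 \lesssim h_m\,\abs{{\bf v}}_{1,D_T}^2,
\end{equation*}
and the claim follows by taking square roots. I do not anticipate a serious obstacle: the only point requiring a small amount of care is making sure the Scott-Zhang patch $D_T$ (as defined in the paper, the union of the closures of all triangles in $\mathcal{T}_{h_m}$ meeting $\bar T$) is indeed the neighborhood on which the classical local Scott-Zhang estimates are valid, which it is under the quasi-uniformity of $\mathcal{T}_{h_m}$ inherited from the shape-regularity hypotheses on the sub-triangulation.
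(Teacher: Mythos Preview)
Your proof is correct and follows essentially the same approach as the paper: the paper also applies the scaled trace inequality on $T$ to obtain $\norm{(I-\pi_{SZ}){\bf v}}_{0,l} \lesssim h_m^{-1/2}\norm{(I-\pi_{SZ}){\bf v}}_{0,T} + h_m^{1/2}\abs{(I-\pi_{SZ}){\bf v}}_{1,T}$, and then invokes the approximation and stability properties of the Scott-Zhang operator to conclude.
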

\begin{proof}
    Due to scaled trace inequality and the quasi-uniformity of the triangulation $\mathcal{T}_{h_m}$, we have
    \begin{equation*}
        \norm{(I-\pi_{SZ}){\bf v}}_{0,l} \lesssim h_{m}^{-1/2} \norm{(I-\pi_{SZ}){\bf v}}_{0,T} +h_m^{1/2}  \abs{(I-\pi_{SZ}){\bf v}}_{1,T}.
    \end{equation*}
    By approximation and stability properties of the Scott-Zhang operator, we get
    \begin{equation*}
        \norm{(I-\pi_{SZ}){\bf v}}_{0,l} \lesssim h_{m}^{-1/2} h_m \abs{{\bf v}}_{1,D_T} +h_m^{1/2}  \abs{{\bf v}}_{1,D_T} \lesssim h_m^{1/2}\abs{{\bf v}}_{1,D_T}.
    \end{equation*}
\end{proof}

We can now prove the following Proposition.
\begin{proposition} \label{prop:Fortin_P2_P0_part}
    Under assumptions (A1) and (A2), the operator $\bar{\Pi}: {\bf W} \to {\bf W}_h^{L}$ satisfies
    \begin{equation*}
        \begin{cases}
            b(\bar\Pi{\bf v}-{\bf v},\bar{q}_h)=0 \quad \forall {\bf v}\in {\bf W}, \;\forall \bar{q}_h \text{ \rm piecewise constant in } \mathcal{T}_h, \\
            \norm{\bar{\Pi}{\bf v}}_{1,\Omega} \lesssim C_{\bar\Pi}(h) \norm{{\bf v}}_{1,\Omega} \quad \forall {\bf v}\in {\bf W},
        \end{cases}
    \end{equation*}
    where $C_{\bar\Pi}(h)=\max_{K \in \mathcal{T}_h}\log(1+\frac{h_K}{h_{m(K)}})$.
\end{proposition}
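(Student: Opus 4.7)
The plan is twofold: first I verify the divergence-orthogonality property, and then I establish the stability estimate.

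For the orthogonality, I would observe from \eqref{eq:pi_bar_def} that $\bar{\Pi}{\bf v} - {\bf v} = -(I - \Pi_b)(I - \pi_L\pi_{SZ}){\bf v}$. For a piecewise constant $\bar{q}_h$ on $\mathcal{T}_h$ with $\bar{q}_h|_K = \bar{q}_{h,K}$, the divergence theorem yields
\[
b(\bar{\Pi}{\bf v}-{\bf v},\bar{q}_h) = -\sum_{K\in\mathcal{T}_h}\bar{q}_{h,K}\sum_{e\in\partial K}{\bf n}_e\cdot\int_e (I - \Pi_b)(I - \pi_L\pi_{SZ}){\bf v}\,d\Gamma.
\]
Each edge integral vanishes: by \eqref{eq:pi_bubble} the value of $\Pi_{b,K}{\bf w}$ at the midpoint of $e$ is $\tfrac{3}{2|e|}\int_e {\bf w}$, and since $\Pi_{b,K}{\bf w}$ vanishes at the endpoints of $e$ and is a polynomial of degree two along $e$, Simpson's rule gives $\int_e \Pi_b{\bf w}\,d\Gamma = \int_e {\bf w}\,d\Gamma$ for every ${\bf w}\in {\bf W}$.

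For the stability, I would proceed locally and then globalize. By the triangle inequality,
\[
\norm{\bar{\Pi}{\bf v}}_{1,K} \le \norm{\pi_L\pi_{SZ}{\bf v}}_{1,K} + \norm{\Pi_b(I - \pi_L\pi_{SZ}){\bf v}}_{1,K}.
\]
Both summands lie in ${\bf V}_{h|K}^{L}$, so Lemma \ref{lemma:stability_Stokes_semi_norm} reduces the task to estimates in the $H^{1/2}(\partial K)$ norm. The first summand is edge-wise linear on $\partial K$, with vertex values inherited from $\pi_{SZ}{\bf v}$; I would control its $H^{1/2}(\partial K)$ norm via a 1D inverse estimate combined with Lemma \ref{lm:fourier} and the optimal choice $\varepsilon = [\log(1+h_K/h_{m(K)})]^{-1}$ already used in the proof of \eqref{eq:coerc_1}, and then invoke the $H^1$-stability of $\pi_{SZ}$. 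For the second summand, Lemma \ref{lm:local_stability_bubble_operator} gives
\[
\norm{\Pi_b(I-\pi_L\pi_{SZ}){\bf v}}_{1/2,\partial K}\lesssim\sum_{e\in\partial K} h_e^{-1/2}\norm{(I-\pi_L\pi_{SZ}){\bf v}}_{0,e},
\]
and I would split $(I-\pi_L\pi_{SZ}){\bf v} = (I-\pi_{SZ}){\bf v} + (I-\pi_L)\pi_{SZ}{\bf v}$. Summing Lemma \ref{lemma:Clement_approx_boundary} over the sub-edges of $e$ yields $\norm{(I-\pi_{SZ}){\bf v}}_{0,e}\lesssim h_m^{1/2}|{\bf v}|_{1,D_e}$, and since $h_m \le h_e$ the prefactor $h_e^{-1/2}h_m^{1/2}$ is harmless; this piece produces no log.

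The main obstacle is the second piece $(I-\pi_L)\pi_{SZ}{\bf v}|_e$, which is a polyline-minus-chord vanishing at the endpoints of $e$; its sup norm on $e$ coincides with the oscillation of $\pi_{SZ}{\bf v}$ on $e$, and bounding this by an $H^1$ quantity on a 2D neighbourhood of $e$, without any lower bound on $h_e$, requires exactly the Fourier trick of Lemma \ref{lm:fourier} with the same $\varepsilon$-optimization, producing the estimate $\norm{(I-\pi_L)\pi_{SZ}{\bf v}}_{0,e}\lesssim h_e^{1/2}\log^{1/2}(1+h_K/h_{m(K)})\norm{{\bf v}}_{1,D_K}$. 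Assembling all contributions, using assumption (A2) to control the number of edges per polygon and the finite-overlap property of the diamonds $D_K$ to sum over $K$, produces the claimed bound with $C_{\bar\Pi}(h)=\max_K\log(1+h_K/h_{m(K)})$. This logarithmic loss stems precisely from the fact that $\pi_L$ discards the fine-scale information of $\pi_{SZ}{\bf v}$ along interior portions of a polygon edge, and is the same mechanism that enters the inf-sup constant $\beta_h$ and, via Remark \ref{rm:quantities}, the final constant $C_{fin}(h)$ of Theorem \ref{teo:abstract_convergence_result}.
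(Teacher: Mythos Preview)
Your overall strategy matches the paper's: split $\bar\Pi{\bf v}$ via \eqref{eq:pi_bar_def}, pass to $H^{1/2}(\partial K)$ through Lemma~\ref{lemma:stability_Stokes_semi_norm}, and treat the two pieces separately; the bubble piece is handled with Lemma~\ref{lm:local_stability_bubble_operator} and Lemma~\ref{lemma:Clement_approx_boundary} exactly as you describe, and your Simpson-rule verification of the orthogonality is fine (the paper simply cites \cite{Boffi_Brezzi_Fortin}).

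The gap is in your treatment of the term $|\pi_L\pi_{SZ}{\bf v}|_{1/2,\partial K}$. This is the $H^{1/2}(\partial K)$-seminorm of a function that is \emph{linear on each edge of $K$}, and those edges may be arbitrarily short. A ``1D inverse estimate'' on such a function goes the wrong way: it would produce a \emph{polynomial} factor like $(h_K/h_{m(K)})^{1/2}$, not a logarithm, and Lemma~\ref{lm:fourier} bounds $L^\infty$ by $H^{1/2+\varepsilon}$, which is also the wrong direction for this step. The paper's argument is more specific and uses an ingredient you do not mention: expand $(\pi_L\pi_{SZ}{\bf v})_{|\partial K}=\sum_j \pi_{SZ}{\bf v}(\mathcal N_j)\varphi_j$ in the hat-function basis, invoke the nontrivial bound $|\varphi_j|_{1/2,\partial K}\lesssim\log^{1/2}(1+h_K/h_{m(K)})$ from \cite{BLR:2017} (cf.\ \eqref{eq:basis-log}), subtract a constant $\boldsymbol\alpha$ so that the coefficients become $\|\pi_{SZ}{\bf v}-\boldsymbol\alpha\|_{\infty,\partial K}$, and only then apply the Fourier bound \eqref{eq:coerc_1.2} to collect the second $\log^{1/2}$. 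Both half-powers are needed, and together they give the full $\log$ for this term.

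Conversely, for the second piece of Term~II the paper does \emph{not} pick up a logarithm: it uses the direct 1D interpolation estimate
\[
\|(I-\pi_L)\pi_{SZ}{\bf v}\|_{0,e}\lesssim h_e^{1/2}\,|\pi_{SZ}{\bf v}|_{1/2,e}\lesssim h_e^{1/2}\,|\pi_{SZ}{\bf v}|_{1,K},
\]
so that after summing over edges (assumption (A2)) the bubble term is bounded by $|{\bf v}|_{1,D_K}+|\pi_{SZ}{\bf v}|_{1,K}$ with no log. Thus in the paper the entire logarithmic loss comes from the $\pi_L\pi_{SZ}{\bf v}$ term, not from the bubble correction; your final constant is right, but the mechanism and the distribution of the log factors differ from the paper's proof.
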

\begin{proof}
    The first property is trivial (see \cite{Boffi_Brezzi_Fortin}). We now show the $H^1$-continuity estimate.
    Since ${\bf W}_h^{L}\subset {\bf W}$, by Poincaré inequality we have
    \begin{equation*}
        \norm{\bar{\Pi}{\bf v}}_{1,\Omega} \lesssim \abs{\bar{\Pi}{\bf v}}_{1,\Omega}.
    \end{equation*}
    Thus, it is sufficient to show that
    \begin{equation}\label{eq:suff-estimate}
        \abs{\bar{\Pi}{\bf v}}_{1,\Omega} \lesssim C_{\bar\Pi}(h) \norm{{\bf v}}_{1,\Omega}.
    \end{equation}
    Let us begin with a local estimate. For all $K \in \mathcal{T}_h$ we have (see \eqref{eq:pi_bar_def})
    \begin{align*}
        \abs{\bar{\Pi}{\bf v}}_{1,K} &= \abs{\pi_L \pi_{SZ} {\bf v} + \Pi_{b}(I-\pi_L \pi_{SZ}){\bf v}}_{1,K} \\
        &\leq \abs{\pi_L \pi_{SZ} {\bf v}}_{1,K} + \abs{\Pi_{b}(I-\pi_L \pi_{SZ}){\bf v}}_{1,K} \quad(\text{use Lemma } \ref{lemma:stability_Stokes_semi_norm}) \\
        & \lesssim \abs{\pi_L \pi_{SZ} {\bf v} }_{1/2,\partial K} + \abs{\Pi_{b}(I-\pi_L \pi_{SZ}){\bf v}}_{1/2,\partial K} = I + II.
    \end{align*}
    Now, we are going to bound each term separately. \\
    \par
    \emph{Estimate of I}. Let us write $(\pi_L \pi_{SZ} {\bf v})_{|\partial K}= \sum_{j}  \pi_{SZ}{\bf v}({\cal N}_j) \varphi_j$, where the ${\cal N}_j$'s denote the nodes on $\partial K$ and the $\varphi_j$'s are the usual corresponding Lagrange basis functions. We now notice that it holds, cf. \cite{BLR:2017}:
    \begin{equation}\label{eq:basis-log}
        \abs{\varphi_j}_{1/2,\partial K} \lesssim 
        \left[\log(1+\frac{h_K}{h_{m(K)}})\right]^{1/2}. 
    \end{equation}
    To continue, for all $\boldsymbol{\alpha} \in \R^2$, recalling that the $\varphi_j$'s are a partition of unity and $\pi_L \boldsymbol{\alpha} =\boldsymbol{\alpha}$, we have
    \begin{align*}
        \abs{\pi_L \pi_{SZ} {\bf v} }_{1/2,\partial K} &=  \abs{\pi_L (\pi_{SZ} {\bf v} - \boldsymbol{\alpha)}}_{1/2,\partial K}=
        \big|\sum_j(\pi_{SZ}{\bf v}({\cal N}_j)-\boldsymbol{\alpha}) \varphi_j\big|_{1/2,\partial K} \\
        & \leq \big(\sum_{j} \abs{\varphi_j}_{1/2,\partial K} \big) \norm{\pi_{SZ} {\bf v} - \boldsymbol{\alpha}}_{\infty,\partial K} \quad \text{(cf. estimates \eqref{eq:coerc_1.2} and \eqref{eq:basis-log})} \\
        & \lesssim \log(1+\frac{h_K}{h_{m(K)}}) \norm{\pi_{SZ} {\bf v} - \boldsymbol{\alpha}}_{1/2,\partial K} \quad (\text{use def of fractional norm}) \\
        & \lesssim \log(1+\frac{h_K}{h_{m(K)}}) ( h_{K}^{-1/2}\norm{\pi_{SZ} {\bf v} - \boldsymbol{\alpha}}_{0,\partial K}+ \abs{\pi_{SZ} {\bf v} - \boldsymbol{\alpha}}_{1/2,\partial K} ).
    \end{align*}
    Now, if we take $\boldsymbol{\alpha}=\frac{1}{\abs{\partial K}} \int_{\partial K} {\bf v} \; dx $ and we apply trace inequality, we get
    \begin{equation} \label{eq:estimate_I_Fortin}
        \abs{\pi_L \pi_{SZ} {\bf v} }_{1/2,\partial K} \lesssim \log(1+\frac{h_K}{h_{m(K)}}) \abs{\pi_{SZ} {\bf v}}_{1/2,\partial K} \lesssim \log(1+\frac{h_K}{h_{m(K)}}) \abs{\pi_{SZ} {\bf v}}_{1,K}.
    \end{equation}

    \emph{Estimate of II}. 
    \begin{equation}\label{eq:Pi_b_est}
    \begin{aligned}
        \abs{\Pi_{b}(I-\pi_L \pi_{SZ}){\bf v}}_{1/2,\partial K} &\leq \norm{\Pi_{b}(I-\pi_L \pi_{SZ}){\bf v}}_{1/2,\partial K} \quad (\text{use Lemma } \ref{lm:local_stability_bubble_operator}) \\
        & \lesssim \sum_{e \in \partial K} h_e^{-1/2} \norm{(I-\pi_L \pi_{SZ}){\bf v}}_{0,e} \quad (\pm \pi_{SZ}{\bf v}) \\
        & \lesssim \sum_{e \in \partial K} h_e^{-1/2} (\norm{(I- \pi_{SZ}){\bf v}}_{0,e} + \norm{(I- \pi_L){\pi_{SZ}\bf v}}_{0,e}).
    \end{aligned}
    \end{equation}
    Now, we observe that on each edge $e \in \partial K$ there is a sub-decomposition induced by the trace of the finer triangulation $\mathcal{T}_{h_m}$, consisting of edges $l$ of smaller triangles. Thus, by Lemma \ref{lemma:Clement_approx_boundary} we obtain
    \begin{equation*}
        \norm{(I- \pi_{SZ}){\bf v}}_{0,e}^2 = \sum_{l \in e} \norm{(I- \pi_{SZ}){\bf v}}_{0,l}^2 \lesssim h_m \sum_{l \in e} \abs{{\bf v}}^2_{1,D_{T_l}},
    \end{equation*}
    where $T_l$ is the triangle contained in $K$ that has $l$ as one of its edges. Since $\mathcal{T}_{h_m}$ is quasi-uniform, we get
    \begin{equation}\label{eq:est_I-piSZ}
         \norm{(I- \pi_{SZ}){\bf v}}_{0,e} \lesssim h_m^{1/2} \abs{{\bf v}}_{1,D_e}.
    \end{equation}
    On the other hand, it holds
    \begin{equation}\label{eq:est_I-piL}
        \norm{(I- \pi_L){\pi_{SZ}\bf v}}_{0,e} \lesssim h_e^{1/2} \abs{\pi_{SZ}{\bf v}}_{1/2,e} \lesssim h_e^{1/2} \abs{\pi_{SZ}{\bf v}}_{1/2,\partial K} \lesssim h_e^{1/2} \abs{\pi_{SZ}{\bf v}}_{1,K}.
    \end{equation}
    Combining \eqref{eq:est_I-piSZ} and \eqref{eq:est_I-piL}, from estimate \eqref{eq:Pi_b_est} we thus have
    \begin{equation*}
        \abs{\Pi_{b}(I-\pi_L \pi_{SZ}){\bf v}}_{1/2,\partial K} \lesssim \sum_{e \in \partial K} h_e^{-1/2}( h_m^{1/2} \abs{{\bf v}}_{1,D_e} + h_e^{1/2} \abs{\pi_{SZ}{\bf v}}_{1,K}).
    \end{equation*}
    Observing that $h_m \leq h_e$, $\sum_{e \in \partial K} \abs{{\bf v}}_{1,D_e}^2 \lesssim \abs{{\bf v}}_{1,D_K}^2$ and using assumption (A2), we get
    \begin{equation} \label{eq:estimate_II_Fortin}
        \abs{\Pi_{b}(I-\pi_L \pi_{SZ}){\bf v}}_{1/2,\partial K} \lesssim \abs{{\bf v}}_{1,D_K} + \abs{\pi_{SZ}{\bf v}}_{1,K}.
    \end{equation}
    By estimates (\ref{eq:estimate_I_Fortin}) and (\ref{eq:estimate_II_Fortin}), and recalling definition \eqref{eq:pi_bar_def}, it follows
    \begin{equation*}
        \abs{\bar{\Pi}{\bf v}}_{1,K}^2 \lesssim \log^2(1+\frac{h_K}{h_{m(K)}}) \abs{\pi_{SZ} {\bf v}}_{1,K}^2 + \abs{{\bf v}}_{1,D_K}^2 \lesssim C_{\bar\Pi}(h)^2 \abs{\pi_{SZ} {\bf v}}_{1,K}^2 + \abs{{\bf v}}_{1,D_K}^2,
    \end{equation*}
    where $C_{\bar\Pi}(h)=\max_{K \in \mathcal{T}_h}\log(1+\frac{h_K}{h_{m(K)}})$. \\
    \par
    Now
    \begin{align*}
        \abs{\bar{\Pi}{\bf v}}_{1,\Omega}^2 &= \sum_{K \in \mathcal{T}_h} \abs{\bar{\Pi}{\bf v}}_{1,K}^2 \\
        & \lesssim C_{\bar\Pi}(h)^2 \sum_{K \in \mathcal{T}_h} \abs{\pi_{SZ} {\bf v}}_{1,K}^2 + \sum_{K \in \mathcal{T}_h} \abs{{\bf v}}_{1,D_K}^2 \\
        &\lesssim  C_{\bar\Pi}(h)^2 \abs{\pi_{SZ} {\bf v}}_{1,\Omega}^2 + \abs{{\bf v}}_{1,\Omega}^2 \quad (\text{use global stability of } \pi_{SZ})\\
        & \lesssim C_{\bar\Pi}(h)^2 \abs{ {\bf v}}_{1,\Omega}^2.
    \end{align*}
    Thus
    \begin{equation*}
        \abs{\bar{\Pi}{\bf v}}_{1,\Omega} \lesssim C_{\bar\Pi}(h) \abs{ {\bf v}}_{1,\Omega} \lesssim C_{\bar\Pi}(h) \norm{ {\bf v}}_{1,\Omega}.
    \end{equation*}
\end{proof}

\begin{corollary}
    Let assumptions (A1) and (A2) hold true. Given the discrete spaces ${\bf V}_h$ and ${\bf Q}_h$ defined in (\ref{eq:Stokes_like_global_spaces}) and the subspace ${\bf W}_h$ defined in (\ref{eq:Stokes_like_subspace_infsup}), there exists a positive constant $\beta_h$ such that
    \begin{equation*}
        \sup_{{\bf w}_h \in {\bf W}_h} \frac{b({\bf w}_h, {\bf q}_h)}{\norm{{\bf w}_h}_{{\bf V}}} \geq \beta_h \norm{{\bf q}_h}_{{\bf Q}/{\bf H}} \quad \forall {\bf q}_h \in {\bf Q}_h,
    \end{equation*}
    where
    \begin{equation}\label{eq:beta_h_est}
        \beta_h \simeq \frac{1}{\max_{K \in \mathcal{T}_h}\log(1+\frac{h_K}{h_{m(K)}})},
    \end{equation}
    and the hidden constant is independent of the mesh size. If the stronger assumption (A3) holds, then clearly $\beta_h \simeq 1$.
\end{corollary}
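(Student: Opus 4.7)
The strategy is the classical Fortin trick: since the continuous inf-sup condition \eqref{eq:continuous_inf_sup} holds on ${\bf W}\times{\bf Q}$ with a constant $\beta$ independent of $h$, to transfer it to ${\bf W}_h\times{\bf Q}_h$ it suffices to exhibit a linear operator $\Pi:{\bf W}\to{\bf W}_h$ satisfying
$b(\Pi{\bf v}-{\bf v},{\bf q}_h)=0$ for every ${\bf q}_h\in{\bf Q}_h$, together with a continuity estimate $\norm{\Pi{\bf v}}_{1,\Omega}\lesssim C_\Pi(h)\norm{{\bf v}}_{1,\Omega}$. Granted such an operator, one obtains directly $\beta_h \gtrsim \beta/C_\Pi(h)$.

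The plan is to build $\Pi$ by the standard two-step composition $\Pi := \bar\Pi + \widetilde\Pi$ already outlined before the statement. Step one is exactly Proposition \ref{prop:Fortin_P2_P0_part}, which provides $\bar\Pi:{\bf W}\to{\bf W}_h^{L}$ taking care of the piecewise constant part of ${\bf q}_h$ and obeying $\norm{\bar\Pi{\bf v}}_{1,\Omega}\lesssim C_{\bar\Pi}(h)\norm{{\bf v}}_{1,\Omega}$ with $C_{\bar\Pi}(h)=\max_{K}\log(1+h_K/h_{m(K)})$. Step two takes care of the remaining piecewise linear (mean-free on each $K$) part of the pressure: following \cite{BLV:2017}, one defines $\widetilde\Pi$ locally by prescribing on each $K$ the two internal divergence moments \eqref{eq:divDoFs} so that $b({\widetilde\Pi}{\bf v},q_h)=b({\bf v}-\bar\Pi{\bf v},q_h)$ for every $q_h\in Q_h$, while keeping all boundary degrees of freedom equal to zero (so that the boundary values, and hence the trace on $\Gamma_C$, of $\bar\Pi{\bf v}$ are not altered, preserving membership in ${\bf W}_h$). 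The internal-bubble argument of \cite{BLV:2017} — which relies only on assumptions (A1)–(A2) and a scaling argument on the reference element — yields the $h$-uniform estimate $\norm{\widetilde\Pi{\bf v}}_{1,\Omega}\lesssim\norm{{\bf v}-\bar\Pi{\bf v}}_{1,\Omega}$.

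Combining the two steps, $\Pi$ preserves $b(\cdot,{\bf q}_h)$ for every ${\bf q}_h\in{\bf Q}_h$ (constant plus linear parts) and satisfies
\[
\norm{\Pi{\bf v}}_{1,\Omega}\le \norm{\bar\Pi{\bf v}}_{1,\Omega}+\norm{\widetilde\Pi{\bf v}}_{1,\Omega}\lesssim \norm{\bar\Pi{\bf v}}_{1,\Omega}+\norm{{\bf v}}_{1,\Omega}\lesssim C_{\bar\Pi}(h)\norm{{\bf v}}_{1,\Omega}.
\]
An application of the Fortin lemma then produces \eqref{eq:beta_h_est} with hidden constant depending only on $\beta$, the mesh constants in (A1)–(A2), and the continuity of $b(\cdot,\cdot)$, hence independent of $h$. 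Finally, under the stronger assumption (A3) the ratio $h_K/h_{m(K)}$ is uniformly bounded, so the logarithmic term becomes $\mathcal{O}(1)$ and we recover $\beta_h\simeq 1$.

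The only non-routine step is the control of $\bar\Pi$ established in Proposition \ref{prop:Fortin_P2_P0_part}; the main obstacle is precisely the presence of small edges, which forces the logarithmic loss through estimate \eqref{eq:basis-log}. Everything else — the mean-preservation property on edges, the local well-posedness of $\widetilde\Pi$ via the internal divergence moments, and the abstract conclusion of Fortin's lemma — proceeds exactly as in the standard VEM-Stokes analysis and does not introduce further degeneracy.
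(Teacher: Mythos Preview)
Your proposal is correct and follows essentially the same route as the paper: the paper's proof of the corollary is a one-line appeal to the continuous inf-sup condition \eqref{eq:continuous_inf_sup} together with Proposition \ref{prop:Fortin_P2_P0_part}, relying on the three-step Fortin construction $\Pi=\bar\Pi+\widetilde\Pi$ already laid out beforehand (with the $\widetilde\Pi$ step deferred to \cite{BLV:2017}). Your sketch spells out exactly this argument, including the correct identification that the only source of the logarithmic loss is the bound on $\bar\Pi$, while $\widetilde\Pi$ contributes an $h$-uniform constant.
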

\begin{proof}
    It follows from the continuous inf-sup condition (\ref{eq:continuous_inf_sup}) and Proposition \ref{prop:Fortin_P2_P0_part}.
\end{proof}
\begin{remark}\label{rm:small_on_Wh}
By a careful inspection of the proof of Proposition \ref{prop:Fortin_P2_P0_part}, one can realize that if \say{small edges} occur \emph{only on $\Gamma_C$ (i.e. only as a result of the node insertion procedure)}, then it holds $\beta_h \simeq 1$. In fact, if ${\bf v}\in {\bf W}$, then ${\bf v}_{|\Gamma_C}={\bf 0}$, cf. \eqref{eq:W-def}. As a consequence $(\bar{\Pi}{\bf v})_{|\Gamma_C}={\bf 0}$ and no effect of \say{small edges} enters into play in the \emph{inf-sup} condition \eqref{eq:discrete_inf_sup}.  
However, in typical time-dependent or large-deformation problems, the contact interface $\Gamma_C$ is not known a priori and evolves over time. Consequently, \say{small edges} may arise outside the active contact region unless the mesh is frequently updated, through de-refinement, whenever contact ceases to occur in a specific area.
That is the reason why we have preferred to treat the general case, thus avoiding any hypotheses of the localization of \say{small edges}. 
\end{remark}

From the analysis above,  we infer the well-posedness of the discrete problem and get an estimate of the quantity $C_{fin}(h)$ appearing in Theorem \ref{teo:abstract_convergence_result}. In fact, we have the following Proposition.
\begin{proposition}\label{pr:cfin}
    Let assumptions (A1)-(A2) hold true. Given the discrete spaces ${\bf V}_h$ and ${\bf Q}_h$ defined in (\ref{eq:Stokes_like_global_spaces}), the closed and convex subset ${\bf K}_h$ defined in (\ref{eq:Stokes_like_convex_subset}), the subspace ${\bf W}_h$ defined in (\ref{eq:Stokes_like_subspace_infsup}) and the stabilizing form (\ref{eq:dofi_dofi}), the discrete problem (\ref{eq:Discrete_problem}) with the choice detailed in section \ref{sss:second} has a unique solution $({\bf u}_h,{\bf p}_h) \in {\bf K}_h \times {\bf Q}_h$. Furthermore, for the quantity $C_{fin}(h)$ defined in \eqref{eq:abstract_constant} it holds
    \begin{equation*}
        C_{fin}(h) \simeq \big(\max_{K \in \mathcal{T}_h}\log(1+\frac{h_K}{h_{m(K)}}) \big)^{10},
    \end{equation*}
    where the hidden constant is independent of the mesh size.
\end{proposition}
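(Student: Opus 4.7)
The proposition has two parts and neither is genuinely difficult: well-posedness is routine once one has the discrete inf-sup, and the scaling of $C_{fin}(h)$ is a mechanical bookkeeping exercise in which one substitutes the already established asymptotics
\begin{equation*}
    \alpha_h^\star \simeq L, \qquad \alpha_{\star,h}\simeq L^{-1}, \qquad \beta_h \simeq L^{-1}, \qquad L := \max_{K\in\mathcal{T}_h}\log\!\bigl(1+h_K/h_{m(K)}\bigr),
\end{equation*}
into the nested definitions \eqref{eq:constant_pressure}--\eqref{eq:abstract_constant}, keeping in mind that all other parameters ($\mu_{max}$, $1/\lambda_{min}$, $\alpha$, $C_{\Gamma_C}$, etc.) are independent of $h$.

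\textbf{Step 1 (well-posedness).} The set ${\bf K}_h$ of \eqref{eq:Stokes_like_convex_subset} is non-empty (it contains $0_{\bf V}$), closed and convex. By \eqref{eq:global_discrete_coercivity} the discrete form $a_h(\cdot,\cdot)$ is coercive on ${\bf V}_h$, the form $c_\lambda(\cdot,\cdot)$ is coercive on ${\bf Q}_h$, and the discrete inf-sup condition \eqref{eq:discrete_inf_sup} holds on ${\bf W}_h$ thanks to the Corollary just preceding this Proposition. Therefore Problem \eqref{eq:Discrete_problem} fits the abstract framework of Section \ref{ss:discrete_problem}, yielding a unique $({\bf u}_h,{\bf p}_h)\in{\bf K}_h\times{\bf Q}_h$ (as in \cite{Belgacem_Renard_Slimane}).

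\textbf{Step 2 (climbing the tower of constants).} First, from \eqref{eq:constant_pressure} and $\beta_h^{-1}\simeq L$, $\alpha_h^\star+1\simeq L$, I get $C_p(h)\simeq L^2$, hence $C_p(h)^2\simeq L^4$. Since $1/(\alpha_{\star,h}\alpha)\simeq L$, the definition \eqref{eq:constant_R1} yields $C_1(h)\simeq L\cdot(\alpha_h^\star)^2\simeq L^3$; and because $1/\lambda_{min}$ is bounded uniformly in $h$, the dominant contribution inside the $\max$ in the definition of $C_2(h)$ is $6/(\alpha_{\star,h}\alpha)\simeq L$, so $C_2(h)\simeq C_p(h)^2\cdot L\simeq L^5$. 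Then, by \eqref{eq:constant_displacement}, the maximum is attained by $C_2(h)$ and
\begin{equation*}
    C(h)\;\simeq\;\frac{1}{\alpha_{\star,h}\alpha}\cdot C_2(h)\;\simeq\;L\cdot L^5 \;=\; L^6.
\end{equation*}

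\textbf{Step 3 (conclusion).} Comparing the three arguments inside $\max$ in \eqref{eq:abstract_constant}, one has $8C_p(h)^2\simeq L^4$, $(4/(\alpha_{\star,h}\alpha))(1+8C_p(h)^2)\simeq L^5$, and $(1+8C_p(h)^2)C(h)\simeq L^4\cdot L^6 = L^{10}$, so the last one dominates and $C_{fin}(h)\simeq L^{10}$, as claimed.

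\textbf{Main obstacle.} There is no real obstacle: the result is a corollary of the deeper work already carried out in Sections~\ref{sssec:alpha-constants}--\ref{sssec:infsup-constants}. The only thing one must be careful about is keeping the logarithmic exponents honest as the definitions are unrolled, and verifying that no cross-term involving $\mu_{max}/\lambda_{min}$ or $C_{\Gamma_C}$ hides further $h$-dependence (it does not, since these quantities were explicitly assumed mesh-independent).
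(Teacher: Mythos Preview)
Your proposal is correct and follows exactly the route implied by the paper: the proposition is presented there as a direct consequence of the estimates on $\alpha_h^\star$, $\alpha_{\star,h}$ and $\beta_h$ obtained in Sections~\ref{sssec:alpha-constants}--\ref{sssec:infsup-constants}, and your bookkeeping of the nested constants \eqref{eq:constant_pressure}--\eqref{eq:abstract_constant} is accurate, yielding the exponent $10$ on the logarithmic factor.
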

\subsection{The convergence result}
Once an estimation for $C_{fin}(h)$ has been established, cf. Proposition \ref{pr:cfin}, an error bound for the method can be obtained by estimating the interpolation and approximation errors, cf. Theorem \ref{teo:abstract_convergence_result}. 
Therefore, we begin by introducing a suitable interpolation operator.
\begin{proposition}\label{pr:interpol}
     Let assumption (A1) hold. Let ${\bf u} \in {\bf V} \cap \big([H^\nu(\Omega^1)]^2 \times [H^\nu(\Omega^2)]^2 \big)$ with $1<\nu\leq 3$. 
   There exists ${\bf u}_I \in {\bf V}_h$ such that
    \begin{equation} \label{eq:interpolant_properties}
        \begin{cases}
            \norm{{\bf u}-{\bf u}_I}_{0,K} + h_K \abs{{\bf u}-{\bf u}_I}_{1,K} \lesssim h_K^{\nu} \abs{{\bf u}}_{H^\nu(K)} \quad \forall K \in \mathcal{T}_h, \\
            {\bf u} \in {\bf K} \implies {\bf u}_I \in {\bf K}_h, \\
            \int_{\Gamma_C}\llbracket u_n \rrbracket \; d\Gamma= \int_{\Gamma_C} \llbracket  {u_I}_n \rrbracket \; d\Gamma.
        \end{cases}
    \end{equation}
\end{proposition}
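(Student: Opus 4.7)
The plan is to define $\mathbf{u}_I$ element-wise by prescribing its VEM degrees of freedom introduced in Section \ref{sss:second}. At every vertex $V$ of $\mathcal{T}_h$ I set $\mathbf{u}_I(V):=\mathbf{u}(V)$; at the midpoint $M_e$ of every edge $e$ \emph{not} contained in $\Gamma_C$ I set $\mathbf{u}_I(M_e):=\mathbf{u}(M_e)$; at the midpoint of each contact edge $e=e_l^c\subset\Gamma_C$ with endpoints $V_0,V_1$ I instead choose
\begin{equation*}
\mathbf{u}_I(M_e):=\frac{3}{2|e|}\int_e \mathbf{u}\,d\Gamma - \frac{1}{4}\bigl(\mathbf{u}(V_0)+\mathbf{u}(V_1)\bigr),
\end{equation*}
i.e. the unique value for which $\int_e\mathbf{u}_I\,d\Gamma=\int_e\mathbf{u}\,d\Gamma$. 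The internal divergence moments are fixed by $\int_K(\mathrm{div}\,\mathbf{u}_I)\, m_j\,dx:=\int_K(\mathrm{div}\,\mathbf{u})\, m_j\,dx$ for $j=2,3$. All the DoFs are well-defined since $H^\nu(\Omega^i)\hookrightarrow C^0(\overline{\Omega}^i)$ in 2D for $\nu>1$, and $\mathrm{div}\,\mathbf{u}\in L^2$.

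The approximation estimate in the first line of \eqref{eq:interpolant_properties} follows from a standard polynomial-preservation plus Bramble--Hilbert argument. The crucial observation is that this interpolation reproduces $[\Pol_2(K)]^2$ exactly: since Simpson's quadrature on an edge is exact for polynomials of degree up to three, one verifies $\tfrac{3}{2|e|}\int_e\mathbf{q}\,d\Gamma-\tfrac{1}{4}(\mathbf{q}(V_0)+\mathbf{q}(V_1))=\mathbf{q}(M_e)$ for every $\mathbf{q}\in[\Pol_2(K)]^2$, so all DoFs of $\mathbf{u}_I$ agree with those of $\mathbf{q}$ and hence $\mathbf{q}_I=\mathbf{q}$. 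Combining this with uniform continuity of the interpolation operator on each star-shaped polygon (vertex/midpoint DoFs bounded through the Sobolev embedding and a trace inequality, divergence moments via Cauchy--Schwarz) and a scaling argument on $K$ based on (A1)--(A2) in the spirit of \cite{BLV:2017}, one obtains $\norm{\mathbf{u}-\mathbf{u}_I}_{0,K}+h_K|\mathbf{u}-\mathbf{u}_I|_{1,K}\lesssim h_K^\nu|\mathbf{u}|_{H^\nu(K)}$.

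For the second property, at each contact vertex $x_l^c$ the pointwise identity $\llbracket u_{I,n}\rrbracket(x_l^c)=\llbracket u_n\rrbracket(x_l^c)\le g_0(x_l^c)$ is immediate from the matching of vertex values and the continuity of $\mathbf{u}$. On each contact edge, by the integral-preserving construction (and since $\Gamma_C$ is a straight line, so $\mathbf{n}^1$ is constant) one has $\int_{e_l^c}\llbracket u_{I,n}\rrbracket\,d\Gamma = \int_{e_l^c}\llbracket u_n\rrbracket\,d\Gamma\le \int_{e_l^c}g_0\,d\Gamma$ by integrating the a.e. Signorini inequality, giving $\mathbf{u}_I\in\mathbf{K}_h$. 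Summing these edge identities over $l=0,\dots,l^\star-1$ yields the third property.

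The main technical obstacle is ensuring that the approximation estimate is uniform in the minimum edge length $h_m$, since (A3) is \emph{not} assumed here. This is potentially delicate because the integral-preserving midpoint formula carries the factor $|e|^{-1}$; however, this factor is always paired with the edge integral $\int_e\mathbf{u}\,d\Gamma$, producing a scale-invariant average that is controlled by a trace inequality with constants independent of $h_m$. Thus no $h_m^{-1}$ pollution enters the continuity constants of the interpolation operator, and the Bramble--Hilbert argument can be carried out relying solely on (A1)--(A2).
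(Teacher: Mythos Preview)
Your construction and argument differ from the paper's in two substantive ways, and the second hides a real gap.

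\textbf{Construction.} The paper defines $\mathbf{u}_I$ on each $K$ as the solution of the local Stokes problem \eqref{eq:Stokes_element_system} with boundary datum $\mathbf{p}^\ast=\mathbf{p}_1+\mathbf{p}_2$, where $\mathbf{p}_1$ is the linear Lagrange interpolant of $\mathbf{u}$ and $\mathbf{p}_2$ is the edge bubble chosen so that $\int_e\mathbf{p}^\ast=\int_e\mathbf{u}$ on \emph{every} edge, together with $\mathrm{div}\,\mathbf{u}_I=\Pi^{0,K}_1(\mathrm{div}\,\mathbf{u})$. Your interpolant preserves the edge integral only on contact edges and uses pointwise midpoint values elsewhere; in particular your $\mathrm{div}\,\mathbf{u}_I$ is \emph{not} $\Pi^{0,K}_1(\mathrm{div}\,\mathbf{u})$ in general, since the constant part of the divergence is fixed by $\int_{\partial K}\mathbf{u}_I\cdot\mathbf{n}$. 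Both constructions deliver properties two and three of \eqref{eq:interpolant_properties} for the same reason, so this difference is harmless there.

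\textbf{Error estimate.} The paper does \emph{not} go through a Bramble--Hilbert/continuity argument. It writes $\mathbf{u}_I-\mathbf{u}$ as the solution of a Stokes problem with right-hand side $\Delta\mathbf{u}-\Pi^{0,K}_0(\Delta\mathbf{u})$, divergence $\Pi^{0,K}_1(\mathrm{div}\,\mathbf{u})-\mathrm{div}\,\mathbf{u}$, and boundary datum $\mathbf{p}^\ast-\mathbf{u}$, and invokes Stokes stability (constants depending only on (A1), hence insensitive to edge lengths) to obtain $|\mathbf{u}-\mathbf{u}_I|_{1,K}$ bounded by these three data terms, each of which is $\lesssim h_K^{\nu-1}|\mathbf{u}|_{\nu,K}$.

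Your route requires the operator bound $\|\mathbf{v}_I\|_{1,K}\lesssim\|\mathbf{v}\|_{\nu,K}$ uniformly in the edge structure. You argue that the DoF \emph{values} are uniformly controlled (true: pointwise values via Sobolev embedding, edge averages are scale-invariant). But that is only half of the continuity estimate: you still need to pass from DoFs to $\|\mathbf{v}_I\|_{1,K}$, and this direction is precisely where small-edge dependence enters --- it is the content of Section~\ref{sssec:alpha-constants}, where the equivalence constants between DoF norms and energy pick up the factor $\log(1+h_K/h_{m(K)})$. Your final paragraph claims ``no $h_m^{-1}$ pollution'' based only on the DoF-value side, which does not close the argument. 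The clean way to control $\|\mathbf{v}_I\|_{1,K}$ (or directly $|\mathbf{u}-\mathbf{u}_I|_{1,K}$) without edge-length dependence is exactly the Stokes stability estimate the paper uses; once you invoke it, the Bramble--Hilbert layer becomes superfluous. Note also that the statement assumes only (A1), whereas your sketch appeals to (A1)--(A2) and to \cite{BLV:2017}, which is typically proved under (A3).
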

\begin{proof}
    The proof is based on a local construction. For each polygon $K \in \mathcal{T}_h$, let ${\bf u} \in [H^{\nu}(K)]^2$, with $1<\nu \leq 3$. We define ${\bf u}_I \in {\bf V}_h^K$ as follows
\begin{equation} \label{eq:definition_interpolant}
    \begin{cases}
        -\boldsymbol{\Delta} {\bf u}_I + \boldsymbol{\nabla} s = {\bf 0} & \text{in } K, \\
        \text{div }{\bf u}_I = \Pi^{0,K}_1 (\text{div }{\bf u}) & \text{in } K, \\
        {\bf u}_I = {\bf p}^\ast & \text{on } \partial K,
     \end{cases}
\end{equation}
where $s \in L^2(K)$ and ${\bf p}^\ast \in [C^0(\partial K)]^2$, ${\bf p}^\ast_{|e} \in [\Pol_2(e)]^2$ for all $e \in \partial K$ such that 
\begin{equation*}
    {\bf p}^\ast_{|e} = {\bf p}_1 + {\bf p}_2 \quad \forall e \in \partial K,
\end{equation*}
with
\begin{equation*}
\begin{aligned}
    &{\bf p}_1 \in [\Pol_1(e)]^2, \quad {\bf p}_1 = \text{linear Lagrange interpolant of } {\bf u} \text{ on the edge } e, \\
    &{\bf p}_2 \in [\Pol_2(e)]^2, \quad {\bf p}_2 = \text{bubble function on \emph{e} such that } \int_e {\bf p}_2 \; d \Gamma= \int_e {\bf u} - {\bf p}_1 \; d\Gamma.
\end{aligned}
\end{equation*}
We observe that (\ref{eq:definition_interpolant}) is well-defined, as we have the compatibility condition on data.\\
Setting $\Pi^{0,K}_0(\boldsymbol{\Delta}{\bf u})= \boldsymbol{\nabla} \tilde{s}$, we can write
\begin{equation*}
    \begin{cases}
        -\boldsymbol{\Delta} ({\bf u}_I - {\bf u})+ \boldsymbol{\nabla} (s - \tilde{s}) = \boldsymbol{\Delta }{\bf u}-\Pi^{0,K}_0(\boldsymbol{\Delta}{\bf u}) & \text{in } K, \\
        \text{div }({\bf u}_I - {\bf u}) = \Pi^{0,K}_1 (\text{div }{\bf u}) - \text{div } {\bf u}& \text{in } K, \\
        {\bf u}_I -{\bf u}= {\bf p}^\ast-{\bf u} & \text{on } \partial K.
     \end{cases}
\end{equation*}
From the stability of Stokes problem, we have
\begin{equation*}
    \abs{{\bf u}_I -{\bf u}}_{1,K} \lesssim \norm{\boldsymbol{\Delta }{\bf u}-\Pi^{0,K}_0(\boldsymbol{\Delta}{\bf u})}_{H^1(K)'} + \norm{\Pi^{0,K}_1 (\text{div }{\bf u}) - \text{div } {\bf u}}_{0,K} + \norm{{\bf p}^\ast-{\bf u}}_{1/2,\partial K},
\end{equation*}
where, due to assumption (A1), the hidden constant does not depend on $K$.
By standard arguments, it follows
\begin{equation*}
    \abs{{\bf u}_I -{\bf u}}_{1,K} \lesssim h_K^{\nu-1} \abs{\bf u}_{\nu,K},
\end{equation*}
from which we infer the first relation of (\ref{eq:interpolant_properties}). The second and third relations in (\ref{eq:interpolant_properties}) follow easily from the definition (\ref{eq:definition_interpolant}).
\end{proof}
Let us now consider the finite-dimensional space
\begin{equation*}
    M_h :=\{ v \in L^2(\Gamma_C) : v_{|e^c_l}  \in \Pol_0(e^c_l) \quad \forall l=0,\dots, l^\star-1\}.
\end{equation*}
\par
Given $f \in H^s(\Gamma_C)$ with $s \in \R$, $0 \le s \le1$, we define $\mathcal{P}^{M_h}(f) \in M_h$ as the piecewise local projection of $f$ onto constants, that is 
\begin{equation} \label{eq:definition_approx_contact_constants}
    \mathcal{P}^{M_h}(f)_{|e^c_l}:= \Pi^{0,e^c_l}_0 f \quad \forall l = 0, \dots, l^\star-1.
\end{equation}
We also set
\begin{equation*}
    h_{\Gamma_C}:=\max_{l=0,\cdots, l^\star-1} \abs{e^c_l}.
\end{equation*}

The following two lemmata can be established by slight adaptations of the results in \cite{Belhachmi_BenBelgacem} and \cite{Belgacem_Renard_Slimane}. 

\begin{lemma}\label{lm:boundary1}
    Let ${\bf u}^i \in [H^\nu(\Omega^i)]^2$ and $p^i \in H^{\nu-1}(\Omega^i)$ with $2 < \nu \leq 5/2$. Let $\psi_h:= \mathcal{P}^{M_h}(\sigma_{n})$. Then, it holds
    \begin{equation*}
        \norm{\sigma_n - \psi_h}_{H^{1/2}_{00}(\Gamma_C)'} \leq C h_{\Gamma_C}^{\nu -1} \big(\norm{{\bf u}}_{\nu,\Omega} + \norm{\bf p}_{\nu-1,\Omega} \big),
    \end{equation*}
    where $C>0$ is independent of $h_{\Gamma_C}$.
\end{lemma}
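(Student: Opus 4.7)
The plan is a standard duality argument that exploits the $L^2$-orthogonality of the projection $\mathcal{P}^{M_h}$ onto piecewise constants, combined with the usual trace regularity of $\sigma_n$.

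First, I would use the definition of the dual norm: for every $\phi\in H^{1/2}_{00}(\Gamma_C)$,
\begin{equation*}
\int_{\Gamma_C}(\sigma_n-\psi_h)\,\phi\,d\Gamma
=\int_{\Gamma_C}(\sigma_n-\psi_h)\bigl(\phi-\mathcal{P}^{M_h}\phi\bigr)\,d\Gamma,
\end{equation*}
since $\psi_h=\mathcal{P}^{M_h}(\sigma_n)$ is by definition the $L^2$-projection and $\mathcal{P}^{M_h}\phi\in M_h$. A Cauchy--Schwarz inequality edge-by-edge then yields
\begin{equation*}
\Bigl|\int_{\Gamma_C}(\sigma_n-\psi_h)\phi\,d\Gamma\Bigr|
\leq \|\sigma_n-\psi_h\|_{0,\Gamma_C}\,\|\phi-\mathcal{P}^{M_h}\phi\|_{0,\Gamma_C}.
\end{equation*}

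Next I would invoke the standard approximation properties of the $L^2$-projection onto piecewise constants on each segment $e^c_l$: for $0\leq s\leq 1$ and $f\in H^s(\Gamma_C)$,
\begin{equation*}
\|f-\mathcal{P}^{M_h}f\|_{0,\Gamma_C}\lesssim h_{\Gamma_C}^{s}\,\|f\|_{s,\Gamma_C}.
\end{equation*}
Applied to $\phi$ with $s=1/2$, and using the continuous embedding $H^{1/2}_{00}(\Gamma_C)\hookrightarrow H^{1/2}(\Gamma_C)$, this gives $\|\phi-\mathcal{P}^{M_h}\phi\|_{0,\Gamma_C}\lesssim h_{\Gamma_C}^{1/2}\|\phi\|_{H^{1/2}_{00}(\Gamma_C)}$. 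Applied to $\sigma_n$ with $s=\nu-3/2\in(1/2,1]$ (which is admissible since $2<\nu\leq 5/2$), it gives $\|\sigma_n-\psi_h\|_{0,\Gamma_C}\lesssim h_{\Gamma_C}^{\nu-3/2}\|\sigma_n\|_{\nu-3/2,\Gamma_C}$.

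Combining the two estimates and taking the supremum over $\phi$ yields
\begin{equation*}
\|\sigma_n-\psi_h\|_{H^{1/2}_{00}(\Gamma_C)'}\lesssim h_{\Gamma_C}^{\nu-1}\,\|\sigma_n\|_{\nu-3/2,\Gamma_C}.
\end{equation*}
The final step is to bound $\|\sigma_n\|_{\nu-3/2,\Gamma_C}$ in terms of the bulk norms of $\mathbf{u}$ and $\mathbf{p}$. Writing $\sigma_n=(2\mu^1\boldsymbol{\varepsilon}(\mathbf{u}^1)\mathbf{n}^1)\cdot\mathbf{n}^1+p^1$ on $\Gamma_C$ and applying the trace theorem (valid because $\nu-1>1/2$): the trace of $\boldsymbol{\varepsilon}(\mathbf{u}^1)\in [H^{\nu-1}(\Omega^1)]^{2\times 2}$ lies in $H^{\nu-3/2}(\Gamma_C)$ and is controlled by $\|\mathbf{u}^1\|_{\nu,\Omega^1}$, while the trace of $p^1\in H^{\nu-1}(\Omega^1)$ lies in $H^{\nu-3/2}(\Gamma_C)$ and is controlled by $\|p^1\|_{\nu-1,\Omega^1}$. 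This closes the estimate.

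The only delicate point is the use of $H^{1/2}_{00}$ rather than $H^{1/2}$; this is harmless here because we only need the embedding $H^{1/2}_{00}\hookrightarrow H^{1/2}$ to exploit the approximation property of $\mathcal{P}^{M_h}$ on the test function side, and no extension argument is required on the side of $\sigma_n-\psi_h$.
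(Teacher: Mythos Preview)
Your argument is correct and is precisely the standard duality argument for this type of estimate. The paper does not actually prove this lemma; it merely states that it ``can be established by slight adaptations of the results in \cite{Belhachmi_BenBelgacem} and \cite{Belgacem_Renard_Slimane}'', and your sketch is exactly the adaptation one finds in those references: exploit the $L^2$-orthogonality of $\mathcal{P}^{M_h}$ to insert $\phi-\mathcal{P}^{M_h}\phi$, apply the $H^s\to L^2$ approximation estimate for piecewise constants once with $s=1/2$ on the test side and once with $s=\nu-3/2$ on $\sigma_n$, and close with the trace theorem using $\sigma_n = 2\mu^1(\boldsymbol{\varepsilon}(\mathbf{u}^1)\mathbf{n}^1)\cdot\mathbf{n}^1 + p^1$.
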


\begin{lemma}\label{lm:boundary2}
    Let ${\bf u}^i \in [H^\nu(\Omega^i)]^2$ and $p^i \in H^{\nu-1}(\Omega^i)$ with $2 < \nu \leq 5/2$. Let $\psi_h:= \mathcal{P}^{M_h}(\sigma_{n})$, $g_0 \in H^{\nu-1/2}(\Gamma_C)$. Then, it holds
    \begin{equation*}
        \int_{\Gamma_C} \psi_h (\llbracket u_{I,n} \rrbracket - \llbracket u_{h,n} \rrbracket) \; d\Gamma \leq C  h_{\Gamma_C}^{2\nu -2} \big( \norm{{\bf u}}^2_{\nu,\Omega} + \norm{{\bf p}}^2_{\nu-1,\Omega} + \norm{g_0}^2_{\nu-1/2,\Gamma_C} \big),
    \end{equation*}
    where $C>0$ is independent of $h_{\Gamma_C}$.
\end{lemma}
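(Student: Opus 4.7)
The plan is to split the integrand along $\Gamma_C$ by inserting $\pm\llbracket u_n\rrbracket$ and $\pm g_0$, reducing the problem to three pieces, two of which vanish or are non-positive by elementary reasons, and a third that encodes the consistency error associated with the Signorini complementarity condition.

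First I would write
\begin{equation*}
\int_{\Gamma_C}\psi_h\bigl(\llbracket u_{I,n}\rrbracket -\llbracket u_{h,n}\rrbracket\bigr)\,d\Gamma
=\underbrace{\int_{\Gamma_C}\psi_h\bigl(\llbracket u_{I,n}\rrbracket -\llbracket u_{n}\rrbracket\bigr)d\Gamma}_{T_1}
+\underbrace{\int_{\Gamma_C}\psi_h\bigl(\llbracket u_{n}\rrbracket -g_0\bigr)d\Gamma}_{T_2}
+\underbrace{\int_{\Gamma_C}\psi_h\bigl(g_0-\llbracket u_{h,n}\rrbracket\bigr)d\Gamma}_{T_3}.
\end{equation*}
For $T_1$: since $\psi_h$ is piecewise constant on the contact edges and since the interpolant constructed in Proposition \ref{pr:interpol} preserves edge-averages (cf.\ the third bullet in \eqref{eq:interpolant_properties}, valid edge-by-edge from the construction ${\bf p}_2$), one has $\int_{e^c_l}\bigl(\llbracket u_{I,n}\rrbracket-\llbracket u_n\rrbracket\bigr)d\Gamma=0$ for every $l$, so $T_1=0$. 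For $T_3$: on each $e^c_l$, $\psi_h$ is the average of $\sigma_n\le 0$, hence non-positive; combined with the discrete constraint $\int_{e^c_l}(\llbracket u_{h,n}\rrbracket -g_0)d\Gamma\le 0$ from \eqref{eq:Stokes_like_convex_subset}, each summand of $T_3$ is non-positive, so $T_3\le 0$.

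The heart of the argument is the bound $T_2\lesssim h_{\Gamma_C}^{2\nu-2}\bigl(\|{\bf u}\|_{\nu,\Omega}^2+\|{\bf p}\|_{\nu-1,\Omega}^2+\|g_0\|_{\nu-1/2,\Gamma_C}^2\bigr)^{1/2}$ (actually with the square, as in the statement). The trick is to use the continuous complementarity $\sigma_n(\llbracket u_n\rrbracket - g_0)=0$ pointwise on $\Gamma_C$ to rewrite
\begin{equation*}
T_2=\int_{\Gamma_C}(\psi_h-\sigma_n)\bigl(\llbracket u_n\rrbracket - g_0\bigr)d\Gamma
=\sum_l \psi_h\big|_{e^c_l}\int_{e^c_l}\bigl(\llbracket u_n\rrbracket - g_0\bigr)d\Gamma .
\end{equation*}
Now I would classify the edges as in \cite{Belhachmi_BenBelgacem}: those lying entirely in the contact set $\{\llbracket u_n\rrbracket=g_0\}$ (where the inner integral vanishes), those lying entirely in the non-contact set $\{\sigma_n=0\}$ (where $\psi_h=0$ by definition of $\mathcal{P}^{M_h}$), and finitely many \emph{transition edges} containing a point $x^\star$ where $\sigma_n(x^\star)=0$ and $\llbracket u_n\rrbracket(x^\star)=g_0(x^\star)$. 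Only the transition edges give a non-zero contribution.

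The main obstacle is obtaining the correct order on the finitely many transition edges. For this, I would invoke the Sobolev embeddings on $\Gamma_C$ (which is one-dimensional): from ${\bf u}\in[H^\nu(\Omega^i)]^2$ and $p^i\in H^{\nu-1}(\Omega^i)$ one gets $\sigma_n\in H^{\nu-3/2}(\Gamma_C)\hookrightarrow C^{0,\nu-2}(\Gamma_C)$, while $\llbracket u_n\rrbracket-g_0\in H^{\nu-1/2}(\Gamma_C)\hookrightarrow C^{1,\nu-2}(\Gamma_C)$ (using $\nu\in(2,5/2]$). Since $\sigma_n(x^\star)=0$, Hölder continuity gives $|\sigma_n(x)|\lesssim |x-x^\star|^{\nu-2}$, so $|\psi_h|_{e^c_l}|\lesssim h_{e^c_l}^{\nu-2}$ on a transition edge. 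Since $\llbracket u_n\rrbracket-g_0\le 0$ attains its maximum zero at the interior point $x^\star$, both the value \emph{and} the derivative vanish there; Taylor expansion in $C^{1,\nu-2}$ then yields $|\llbracket u_n\rrbracket-g_0|(x)\lesssim |x-x^\star|^{\nu-1}$, hence $\bigl|\int_{e^c_l}(\llbracket u_n\rrbracket - g_0)d\Gamma\bigr|\lesssim h_{e^c_l}^{\nu}$. Multiplying these two pointwise bounds and summing over the $O(1)$ transition edges gives the claimed $h_{\Gamma_C}^{2\nu-2}$ order, with the Sobolev norms $\|{\bf u}\|_{\nu,\Omega}$, $\|{\bf p}\|_{\nu-1,\Omega}$, $\|g_0\|_{\nu-1/2,\Gamma_C}$ controlling the various embedding constants. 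Packaging this into a single quadratic bound and combining with $T_1=0$ and $T_3\le 0$ concludes the proof.
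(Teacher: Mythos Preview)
Your proof is correct and follows precisely the strategy of \cite{Belhachmi_BenBelgacem} and \cite{Belgacem_Renard_Slimane}, which is exactly what the paper defers to (it gives no independent proof of this lemma). The decomposition into $T_1,T_2,T_3$, the edge-average preservation for $T_1$, the sign argument for $T_3$, and the transition-edge analysis for $T_2$ via the $C^{0,\nu-2}$ and $C^{1,\nu-2}$ embeddings are the standard ingredients from those references. One point worth making explicit: your argument tacitly assumes that the number of points on $\Gamma_C$ where the constraint changes from binding to nonbinding is bounded independently of $h$; this hypothesis is customary in the cited works (and the paper itself invokes it in Remark~\ref{rm:firstorder} for the linear case), but it is not stated in Lemma~\ref{lm:boundary2} and should be acknowledged.
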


Now, we recall the following approximation result for polynomials in star-shaped domains, see for instance \cite{brenner-scott:book}.
\begin{proposition}\label{pr:polyappr}
    Let assumption (A1) hold. Let $K\in \mathcal{T}_h$ and $k \in \N$, $k \geq 1$. For any ${\bf u} \in [H^{s+1}(K)]^2$ with $s \in \R$, $0 \leq s \leq k$, there exists ${\bf u}_\pi \in [\Pol_k(K)]^2$ such that
    \begin{equation} \label{eq:Dupont_Scott_estimate}
        \norm{{\bf u}-{\bf u}_\pi}_{0,K} + h_K \abs{{\bf u}-{\bf u}_\pi}_{1,K} \leq C h_{K}^{s+1} |{\bf u}|_{s+1,K}, 
    \end{equation}
    where $C>0$ depends only on $k$ and $\gamma$.
\end{proposition}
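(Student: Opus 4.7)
The plan is to construct ${\bf u}_\pi$ as an averaged Taylor polynomial and invoke the Bramble--Hilbert lemma, following the standard framework detailed in \cite{brenner-scott:book}. Assumption (A1) provides the essential ingredient: each $K$ is star-shaped with respect to a ball $B_K$ of radius $\rho_K \geq \gamma h_K$, so the chunkiness parameter $h_K/\rho_K$ is uniformly bounded by $1/\gamma$, and all constants arising from the averaged Taylor construction depend only on $k$ and $\gamma$, not on $h_K$ itself.

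First, I would define, component-wise, the averaged Taylor polynomial of order $k+1$ by
\begin{equation*}
Q^{k+1} u({\bf x}) := \int_{B_K} \phi({\bf y})\, T^{k+1} u({\bf y}, {\bf x})\; d{\bf y},
\end{equation*}
where $T^{k+1} u({\bf y}, \cdot)$ is the Taylor polynomial of order $k+1$ of $u$ centered at ${\bf y}$, and $\phi \in C^\infty_c(B_K)$ is a nonnegative cutoff with $\int \phi = 1$. By construction $Q^{k+1} u \in \Pol_k(K)$, so setting ${\bf u}_\pi := (Q^{k+1} u_1, Q^{k+1} u_2) \in [\Pol_k(K)]^2$ yields a natural candidate approximant (well-defined already for ${\bf u} \in [H^1(K)]^2$, hence for every $s$ in the range of interest).

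Second, for integer $s \in \{0, 1, \dots, k\}$, the classical Bramble--Hilbert / Dupont--Scott theorem on a star-shaped domain (see Lemma 4.3.8 and Theorem 4.4.4 in \cite{brenner-scott:book}) yields
\begin{equation*}
\abs{{\bf u} - {\bf u}_\pi}_{m,K} \leq C\, h_K^{s+1-m}\, \abs{{\bf u}}_{s+1,K} \qquad \text{for } m=0,1,
\end{equation*}
with $C = C(k,\gamma)$. Multiplying the $m=1$ estimate by $h_K$ and adding it to the $m=0$ estimate yields \eqref{eq:Dupont_Scott_estimate} for every integer $s$ in $[0,k]$. For non-integer $s$, the same estimate follows by real interpolation between the integer-order Sobolev spaces $H^{\lfloor s\rfloor+1}(K)$ and $H^{\lceil s\rceil+1}(K)$, exploiting the boundedness of $I - Q^{k+1}$ at both integer endpoints together with the standard definition of $H^{s+1}(K)$ via the $K$-method.

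The main technical point is showing that the constant $C$ remains independent of $h_K$ under only the star-shape condition (no full shape regularity of the boundary). This is handled by the well-known scaling argument that underlies the Bramble--Hilbert framework: one transfers the problem to a reference configuration in which $\rho_K$ and $h_K$ are comparable, performs the Bramble--Hilbert estimate there where all constants depend only on the normalized chunkiness $1/\gamma$, and then scales back using the correct powers of $h_K$. Since $B_K$ has radius at least $\gamma h_K$, no pathology from thin or distorted polygons enters the analysis, and the final bound is uniform with respect to $h$.
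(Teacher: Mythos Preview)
Your proposal is correct and follows precisely the approach the paper invokes: the paper does not give its own proof of this proposition but simply cites \cite{brenner-scott:book}, and your construction via the averaged Taylor polynomial together with the Bramble--Hilbert/Dupont--Scott lemma on star-shaped domains is exactly the argument found there.
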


Finally, based on Theorem \ref{teo:abstract_convergence_result} and on Proposition \ref{pr:cfin}, we now prove the following convergence theorem.
\begin{theorem}\label{th:converg}
    Let assumptions (A1)-(A2) hold. Let $({\bf u},{\bf p}) \in {\bf K}\times {\bf Q}$ and $({\bf u}_h,{\bf p}_h) \in {\bf K}_h \times {\bf Q}_h$ be the solutions of \eqref{eq:contact_problem} and (\ref{eq:Discrete_problem}), respectively. Assume ${\bf u}^i \in [H^\nu(\Omega^i)]^2$, $p^i \in H^{\nu-1}(\Omega^i)$, $g_0 \in H^{\nu-1/2}(\Gamma_C)$ and ${\bf f}^i \in[H^{\nu-2}(\Omega))]^2$ with $2<\nu \leq 5/2$, $i=1,2$. It holds:
    \begin{equation}\label{eq:err_est_quad}
        \norm{{\bf u}-{\bf u}_h}_{{\bf V}} + \norm{{\bf p}-{\bf p}_h}_{{\bf Q}/{\bf H}} \lesssim C(h) h^{\nu-1} \big(\norm{{\bf u} }_{\nu,\Omega} + \norm{{\bf p}}_{\nu-1,\Omega} + \norm{g_0}_{\nu-1/2,\Gamma_C}+  \norm{{\bf f}}_{\nu-2,\Omega}\big),
    \end{equation}
    where $C(h)=\big(\max_{K \in \mathcal{T}_h}\log(1+\frac{h_K}{h_{m(K)}})\big)^5$ and the hidden constant is independent of the mesh size. If the stronger assumption (A3) holds, then clearly $C(h) \simeq 1$. 
\end{theorem}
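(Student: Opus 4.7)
The plan is to specialize the abstract bound of Theorem~\ref{teo:abstract_convergence_result} by selecting concrete approximants, bounding each resulting term via the regularity-based lemmas already in the paper, and combining this with the constant estimate from Proposition~\ref{pr:cfin}. Since the left-hand side of Theorem~\ref{teo:abstract_convergence_result} is squared, the target rate $h^{\nu-1}$ and the factor $C(h)=(\log)^5$ will emerge upon taking a square root of $C_{fin}(h)\cdot h^{2(\nu-1)}$.

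First, I would make the following choices. For the displacement, I take ${\bf u}_I\in{\bf K}_h$ from Proposition~\ref{pr:interpol} and the polynomial approximant ${\bf u}_\pi$ from Proposition~\ref{pr:polyappr}. For the pressure, I let ${\bf p}_I\in{\bf Q}_h$ be the piecewise $L^2$-projection of ${\bf p}$: since $m_1\equiv 1$ is in the local basis of $Q_{h|K}^i$, this projection preserves the element-wise mean value of $p^i$, and therefore satisfies the required constraint $(p_I^i)_0=p_0^i$ in both the pure Dirichlet--contact case ($H^i=\R$) and the Neumann case ($H^i=\{0\}$). For the load, I take $F_h$ as defined in~\eqref{eq:fh1}-\eqref{eq:fh2}. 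Finally, I set $\psi_h:=\mathcal{P}^{M_h}(\sigma_n)$, as required by Lemmata~\ref{lm:boundary1}-\ref{lm:boundary2}.

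Each term on the right-hand side of Theorem~\ref{teo:abstract_convergence_result} then produces a bound of order $h^{2(\nu-1)}$. Propositions~\ref{pr:interpol} and \ref{pr:polyappr} control $\norm{{\bf u}-{\bf u}_I}_{{\bf V}}^2+\abs{{\bf u}-{\bf u}_\pi}_{1,\Omega,h}^2\lesssim h^{2(\nu-1)}\norm{{\bf u}}_{\nu,\Omega}^2$. A fractional Bramble--Hilbert estimate on piecewise $\Pol_1$ handles $\norm{{\bf p}-{\bf p}_I}_{{\bf Q}/{\bf H}}^2\lesssim h^{2(\nu-1)}\norm{{\bf p}}_{\nu-1,\Omega}^2$. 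The load error is obtained by rewriting, element-wise, $\int_K({\bf f}-{\bf f}_h)\cdot{\bf v}\,dx=\int_K({\bf f}-\boldsymbol{\Pi}^{0,K}_0{\bf f})\cdot({\bf v}-\boldsymbol{\Pi}^{0,K}_0{\bf v})\,dx$ and combining a fractional Bramble--Hilbert estimate for ${\bf f}\in[H^{\nu-2}(K)]^2$ (valid since $0<\nu-2\leq 1/2$) with the Poincar\'e-type estimate $\norm{{\bf v}-\boldsymbol{\Pi}^{0,K}_0{\bf v}}_{0,K}\lesssim h_K\abs{{\bf v}}_{1,K}$, producing $\norm{F-F_h}_{{\bf V}'}^2\lesssim h^{2(\nu-1)}\norm{{\bf f}}_{\nu-2,\Omega}^2$. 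Lemma~\ref{lm:boundary1} supplies $\norm{\sigma_n-\psi_h}_{H^{1/2}_{00}(\Gamma_C)'}^2$, and Lemma~\ref{lm:boundary2} directly yields the contact consistency integral $\int_{\Gamma_C}\psi_h(\llbracket u_{I,n}\rrbracket-\llbracket u_{h,n}\rrbracket)\,d\Gamma$ with the same exponent $2(\nu-1)$.

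Plugging these estimates into Theorem~\ref{teo:abstract_convergence_result} and using $h_{\Gamma_C}\leq h$, the right-hand side becomes $C_{fin}(h)\cdot h^{2(\nu-1)}\cdot(\text{data norms})^2$. By Proposition~\ref{pr:cfin}, $C_{fin}(h)\simeq\big(\max_{K\in\mathcal{T}_h}\log(1+h_K/h_{m(K)})\big)^{10}$; taking the square root produces the factor $C(h)=\big(\max_{K\in\mathcal{T}_h}\log(1+h_K/h_{m(K)})\big)^5$ and yields~\eqref{eq:err_est_quad}. The main subtlety is the coherent bookkeeping of the logarithmic dependence: all the interpolation and projection estimates listed above are independent of $h_m$, so the entire $(\log)^5$ factor originates from the stability and inf-sup degradation encoded in $C_{fin}(h)$; under the stronger assumption (A3) these logarithms collapse to $1$. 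The hard cap $\nu\leq 5/2$ is intrinsic and stems from Lemma~\ref{lm:boundary2}, reflecting the loss of $1/2$ typical of contact problems, as noted in the introduction.
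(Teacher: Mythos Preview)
Your proposal is correct and follows essentially the same approach as the paper: specialize Theorem~\ref{teo:abstract_convergence_result} with the concrete approximants from Propositions~\ref{pr:interpol} and~\ref{pr:polyappr}, invoke Lemmata~\ref{lm:boundary1}--\ref{lm:boundary2} for the contact terms, bound $\norm{F-F_h}_{{\bf V}'}$ by $h^{\nu-1}\norm{{\bf f}}_{\nu-2,\Omega}$, and extract the $(\log)^5$ factor from $C_{fin}(h)$ via Proposition~\ref{pr:cfin}. The paper's proof is deliberately terse and simply cites these ingredients, whereas you spell out the choices for ${\bf p}_I$ and the orthogonality argument for the load error; these details are sound and in line with the intended argument.
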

\begin{proof}
By noting that it holds, cf. \eqref{eq:fh1} and \eqref{eq:fh2}:
\begin{equation}
\norm{F-F_h}_{\bf V'}\leq C\, h^{\nu-1}  \norm{{\bf f}}_{\nu-2,\Omega},
\end{equation}
the proof immediately follows from Propositions \ref{pr:interpol} and \ref{pr:polyappr}, and Lemmata \ref{lm:boundary1} and \ref{lm:boundary2}.    
\end{proof}

\begin{remark}\label{rm:firstorder}
For the first order element detailed in section \ref{sss:first} an analogous convergence result can be obtained. More precisely, under the mesh assumptions (A1) and (A2), one can show what follows (cf. also \cite{Belgacem_Renard_Slimane} and \cite{Belhachmi_BenBelgacem}).
\begin{itemize}
\item For $1<\nu\leq 3/2$ the error estimate \eqref{eq:err_est_quad} holds true.
\item For $3/2<\nu < 2$, in order to get estimate \eqref{eq:err_est_quad} one needs to additionally assume that on $\Gamma_C$ there is only a finite number of points where the constraint changes from binding to nonbinding. Such an assumption is typically met in practical situations.  
\item For the particular case $\nu=2$, one can show a slightly worse estimate:
\begin{equation}\label{eq:err_est_lin}
\norm{{\bf u}-{\bf u}_h}_{{\bf V}} + \norm{{\bf p}-{\bf p}_h}_{{\bf Q}/{\bf H}} 
\lesssim |\log h|^{1/4} C(h) h \big(\norm{{\bf u} }_{2,\Omega} + \norm{{\bf p}}_{1,\Omega} 
+ \norm{g_0}_{3/2,\Gamma_C}+  \norm{{\bf f}}_{0,\Omega}\big),
\end{equation}
where we recall that $C(h)=\big(\max_{K \in \mathcal{T}_h}\log(1+\frac{h_K}{h_{m(K)}})\big)^5$
and the hidden constant is independent of the mesh size. Again, if the stronger assumption (A3) holds, then $C(h)\simeq 1$ and a logarithmic term still remains.
\end{itemize}

\begin{remark}\label{rm:log}
We remark that the logarithmic terms appearing in Theorem \ref{th:converg} and Remark \ref{rm:firstorder} are not excessively detrimental for the error estimates,
as they blow up slower than any negative power of the meshsize $h$. Indeed, these theoretical logarithmic degeneracies are not observed in the situations of practical interest. We also remark that similar logarithmic terms appear in other frameworks, e.g. in the Domain Decomposition methods, see \cite{ToselliWidlund2005}, for instance.
\end{remark}

\end{remark}
\section{Numerical tests}\label{sec:numer}
In this section we present some numerical experiments to verify the actual performance of the method. 

\subsection{Tests with fully available analytical solution}\label{ss:analytical}
We consider two square elastic bodies in their reference configuration, $\Omega^1=(0,1)\times (-1,0)$ and $\Omega^2=(0,1) \times (0,1)$, with no initial gap ($g_0 =0$). The contact boundary is $\Gamma_C:=[0,1]\times \{0\}$. The Lamé parameters are set to $\mu^1=\mu^2=1$ and $\lambda^1=\lambda^2=\lambda$; the value of $\lambda$ is varied to cover both the compressible and nearly incompressible regimes. As shown in Figure \ref{fig:reference_config_squares}, Dirichlet boundary conditions are imposed on the horizontal edges ($y=\pm 1$), while Neumann conditions are applied on the vertical edges ($x=0$ and $x=1$).
\begin{figure}[htbp]
  \centering
  \includegraphics[width=0.3\linewidth]{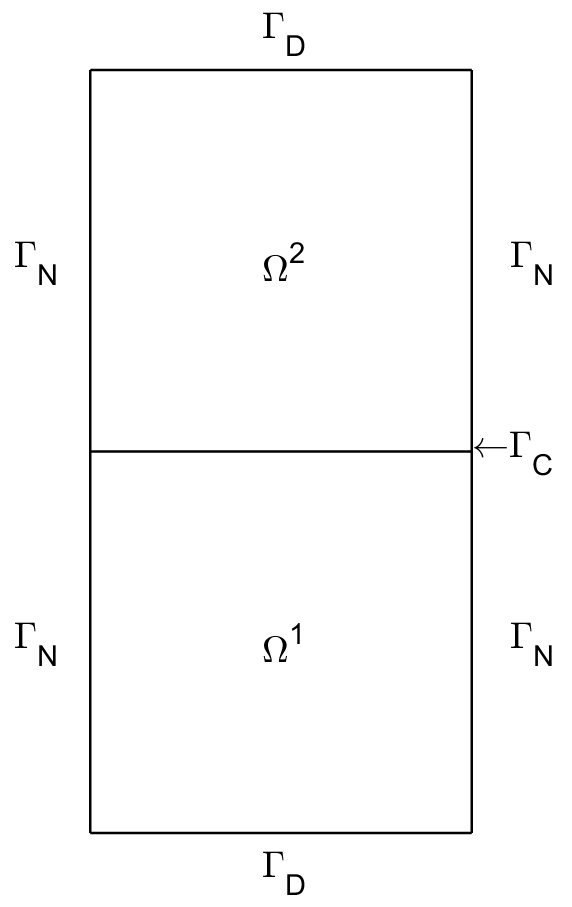}
  \caption{Reference configuration and boundary conditions.}
  \label{fig:reference_config_squares}
\end{figure}
The two domains are partitioned using different families of polygonal meshes:
\begin{itemize}
    \item $\{ \mathcal{Q}_h\}_h$: sequence of square meshes with $4$, $16$, $64$ and $256$ elements,
    \item $\{ \mathcal{H}_h\}_h$: sequence of hexagonal meshes with $4$, $16$, $64$ and $256$ elements,
    \item $\{ \mathcal{W}_h \}_h$: sequence of WEB-like meshes with $8$, $32$, $128$ and $512$ elements.
\end{itemize}
An example of the adopted meshes is shown in Figure \ref{fig:meshes}.
\begin{figure}[htbp]
  \centering
  \includegraphics[width=0.8\linewidth]{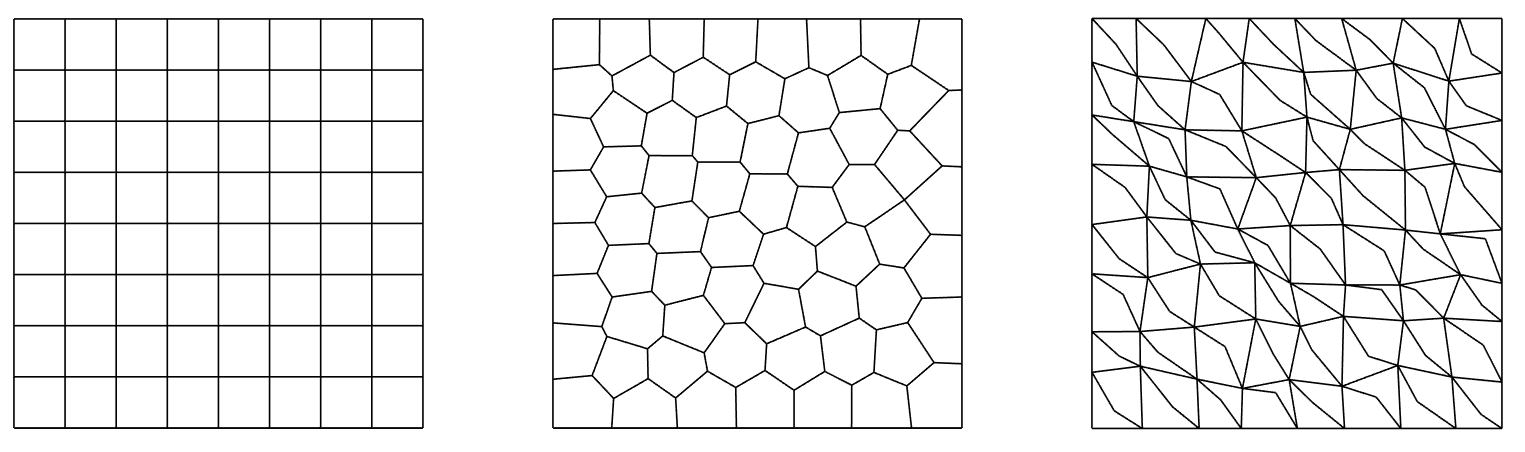}
  \caption{Example of polygonal meshes: $\mathcal{Q}_{64}$, $\mathcal{H}_{64}$, $\mathcal{W}_{128}$.}
  \label{fig:meshes}
\end{figure}
Throughout the tests we consider two different mesh configurations. In the first one, referred to as the \emph{Initial Matching} (I.M.) configuration, the two polygonal meshes are generated so that the nodes coincide along the contact interface, and the edge lengths are comparable to the maximum element diameter $h_{max}$. Starting from this setting, in the second configuration we randomly perturb the nodes of $\Omega^1$; as a result, the node insertion algorithm produces small edges on $\Gamma_C$, whose lengths range between $1\%$ and $2\%$ of the maximum element diameter $h_{max}$. We refer to this case as the \emph{Small Edges} (S.E.) configuration. 
It is worth recalling that the VEM solution ${\bf u}_h$ is not explicitly known point-wise inside the elements. As a consequence, the method error is not computable even if the exact solution $\bf u$ of the problem is available. As usual in the VEM framework, we compute the error by comparing the analytical solution $\bf{u}$ with a computable polynomial projection $\Pi^{\varepsilon,K} {\bf u}_h$ (specifically: $\Pi^{\varepsilon,K}_1 {\bf u}_h$ for the linear scheme of section \ref{sss:first}, $\Pi^{\varepsilon,K}_2 {\bf u}_h$ for the quadratic one of section \ref{sss:second}). The error quantities employed for the convergence analysis are
\begin{equation}
    \delta({\bf u}):= \Big( \sum_{K \in \mathcal{T}_h} \abs{{\bf u} - \Pi^{\varepsilon,K} ({\bf u}_h)}_{1,K}^2 \Big)^{1/2} \quad \text{and} \quad \delta({\bf p}):=\norm{{\bf p}-{\bf p}_h}_{\bf Q}.
\end{equation}
The solver we use for the variational inequalities \eqref{eq:Discrete_problem} is based on the Uzawa algorithm, see e.g. \cite{Allaire2007}.

\subsubsection{Patch test}\label{sss:patch}
The first example is a contact patch test. Dirichlet and Neumann boundary data, as well as the load terms ${\bf f}^i$ are chosen in such a way that the analytical solution is
\begin{equation}
    {\bf u}^i = \frac{1}{\lambda^i}\begin{pmatrix}
        0 \\ -(y+1)
    \end{pmatrix}, \quad
    p^i = \lambda^i \text{div }{\bf u}^i=-1, \quad 
    \text{for } i=1,2.
\end{equation}
Tests are performed on all families of decompositions and for both the methods detailed in sections \ref{sss:first} and \ref{sss:second}, but only for the coarser meshes. Results are summarized in Table \ref{tab:patch_test}. Looking at the results, we see that both our VEM methods pass the patch test, both in the compressible ($\lambda= 1$) and the nearly incompressible ($\lambda=10^3$ and $\lambda=10^8$) cases. We remark that the value $\lambda=10^8$ is exceptionally large and may not be significant for actual applications. Nevertheless, we include this case to demonstrate the robustness of the proposed schemes.

\begin{table}[htbp]
  \tiny
  \centering
  \begin{tabular}{lcccccccccc}
    \toprule
    \multicolumn{3}{c}{} & \multicolumn{2}{c}{Initial Matching (k=1)} & \multicolumn{2}{c}{Small Edges (k=1)} & \multicolumn{2}{c}{Initial Matching (k=2)} & \multicolumn{2}{c}{Small Edges (k=2)}\\
    \cmidrule(lr){4-5} \cmidrule(lr) {6-7} \cmidrule(lr){8-9} \cmidrule(lr) {10-11}
    $\lambda$ & Mesh & N Elms &$\delta({\bf u})$ & $\delta({\bf p})$ & $\delta({\bf u})$ &$\delta({\bf p})$&$\delta({\bf u})$ & $\delta({\bf p})$ & $\delta({\bf u})$ &$\delta({\bf p})$\\
    \midrule
    
    \multirow{6}{*}{$10$} & \multirow{2}{*}{$\mathcal{Q}_h$} & $4$ & $8.8$e$-16$ & $4.8$e$-16$ & $1.0$e$-15$ & $5.3$e$-16$ & $1.7$e$-15$ & $1.9$e$-15$ & $1.3$e$-15$ & $1.8$e$-15$\\
    & & $16$ & $7.2$e$-16$ & $6.3$e$-16$ & $1.0$e$-15$ & $6.5$e$-16$ & $1.1$e$-14$ & $8.6$e$-15$ & $8.6$e$-15$ & $7.0$e$-15$ \\
    & \multirow{2}{*}{$\mathcal{H}_h$} & $4$ & $3.2$e$-09$ & $1.6$e$-10$ & $3.6$e$-09$ & $1.6$e$-10$ & $9.0$e$-10$ & $9.1$e$-10$ & $8.1$e$-10$ & $8.5$e$-10$ \\
    & & $16$ & $2.6$e$-09$ & $6.1$e$-10$ & $2.9$e$-09$ & $6.0$e$-10$ & $7.9$e$-10$ & $7.4$e$-10$ & $7.5$e$-10$ & $7.2$e$-10$ \\
     & \multirow{2}{*}{$\mathcal{W}_h$} & $8$ & $4.8$e$-16$ & $6.2$e$-16$ & $4.7$e$-16$ & $5.0$e$-16$ & $5.1$e$-15$ & $6.1$e$-15$ & $1.1$e$-14$ & $9.4$e$-15$ \\
    & & $32$ & $1.3$e$-15$ & $1.4$e$-15$ & $2.7$e$-15$ & $2.6$e$-15$ & $6.8$e$-14$ & $5.7$e$-14$ & $6.8$e$-14$ & $4.9$e$-14$ \\
    
    \midrule
    \multirow{6}{*}{$10^3$} & \multirow{2}{*}{$\mathcal{Q}_h$} & $4$ & $1.3$e$-15$ & $2.6$e$-16$ & $8.7$e$-16$ & $5.1$e$-16$    & $7.4$e$-16$ & $1.3$e$-15$ & $7.4$e$-16$ & $1.3$e$-15$ \\
    & & $16$ & $7.5$e$-16$ & $6.0$e$-16$ & $6.4$e$-16$ & $6.4$e$-16$ & $2.9$e$-15$ & $1.8$e$-15$ & $5.3$e$-15$ & $4.1$e$-15$ \\
    & \multirow{2}{*}{$\mathcal{H}_h$} & $4$ & $2.8$e$-09$ & $1.6$e$-10$ & $3.2$e$-09$ & $1.6$e$-10$ & $7.2$e$-10$ & $8.9$e$-10$ & $6.5$e$-10$ & $8.4$e$-10$  \\
    & & $16$ & $2.2$e$-09$ & $6.0$e$-10$ & $2.5$e$-09$ & $5.9$e$-10$ & $6.2$e$-10$ & $7.1$e$-10$ & $5.9$e$-10$ & $6.9$e$-10$ \\
     & \multirow{2}{*}{$\mathcal{W}_h$} & $8$ & $4.4$e$-16$ & $5.2$e$-16$ & $7.2$e$-16$ & $6.9$e$-16$ & $8.0$e$-16$ & $1.0$e$-15$ & $1.3$e$-15$ & $1.6$e$-15$ \\
    & & $32$ & $1.4$e$-15$ & $1.3$e$-15$ & $2.4$e$-15$ & $2.7$e$-15$ & $5.2$e$-14$ & $6.8$e$-15$ & $9.2$e$-15$ & $1.1$e$-14$ \\
    \midrule
    
    \multirow{6}{*}{$10^8$} & \multirow{2}{*}{$\mathcal{Q}_h$} & $4$ & $1.3$e$-15$ & $4.9$e$-16$ & $5.4$e$-16$ & $4.0$e$-16$ & $4.4$e$-16$& $5.7$e$-16$ & $8.3$e$-16$ & $1.0$e$-15$ \\
    & & $16$ & $7.3$e$-16$ & $5.4$e$-16$ & $5.9$e$-16$ & $6.3$e$-16$ & $3.7$e$-15$ & $2.3$e$-15$ & $4.4$e$-15$ & $3.2$e$-15$ \\
    & \multirow{2}{*}{$\mathcal{H}_h$} & $4$ & $2.7$e$-09$ & $1.6$e$-10$ & $3.2$e$-09$ & $1.6$e$-10$ & $7.1$e$-10$ & $8.9$e$-10$ & $6.5$e$-10$ & $8.4$e$-10$\\
    & & $16$ & $2.2$e$-09$ & $6.0$e$-10$ & $2.5$e$-09$ & $5.9$e$-10$ & $6.2$e$-10$ & $7.1$e$-10$ & $5.9$e$-10$ & $6.9$e$-10$ \\
     & \multirow{2}{*}{$\mathcal{W}_h$} & $8$ & $5.5$e$-16$ & $6.7$e$-16$ & $5.3$e$-16$ & $7.1$e$-16$  & $9.9$e$-16$ & $9.2$e$-16$ & $1.3$e$-15$ & $1.9$e$-15$ \\
    & & $32$ & $1.2$e$-15$ & $1.2$e$-15$ & $2.6$e$-15$ & $3.2$e$-15$ & $5.1$e$-15$ & $6.5$e$-15$ & $8.7$e$-15$ & $1.2$e$-14$ \\
    \bottomrule
  \end{tabular}
  \caption{Patch test.}
  \label{tab:patch_test}
\end{table}

\subsubsection{Test with a trigonometric displacement solution}\label{sss:divfree}
In this second experiment, we test the convergence properties of the VEM methods presented in sections \ref{sss:first} and \ref{sss:second}. The data are chosen in accordance with the analytical solution
\begin{equation}
    {\bf u}^i = \begin{pmatrix}
        \pi x \cos (\pi y) \\ -\sin (\pi y)
    \end{pmatrix}, \quad
    p^i = \lambda^i \text{div }{\bf u}^i=0, \quad 
    \text{for } i=1,2.
\end{equation}
We study the behavior of $\delta({\bf u})$ and $\delta({\bf p})$ versus the maximum element diameter $h_{max}$, comparing the convergence rate of the methods for three different values of $\lambda$, as in section \ref{sss:patch}: $\lambda= 1$ (compressible case), $\lambda=10^3$ and $\lambda=10^8$ (nearly incompressible cases). We report the convergence diagrams for both the \emph{Initial Matching} and \emph{Small Edges} configurations.

In Figures \ref{fig:square_div_free_convergence-deg1}, \ref{fig:hexagon_div_free_convergence-deg1} and \ref{fig:web_div_free_convergence-deg1}, we display the results of the first order scheme presented in section \ref{sss:first} for the sequence of squared meshes $\mathcal{Q}_h$, hexagonal meshes $\mathcal{H}_h$ and web meshes $\mathcal{W}_h$, respectively. 
We notice that the theoretical prediction of Remark \ref{rm:firstorder} is confirmed, with $\nu=2$. As expected, the logarithmic degeneracy is not practically observed in the numerical experiments. 

Analogously, in Figures \ref{fig:square_div_free_convergence-deg2}, \ref{fig:hexagon_div_free_convergence-deg2} and \ref{fig:web_div_free_convergence-deg2}, we show the results of the second order scheme presented in section \ref{sss:second} for the same meshes.
We observe a super-convergence behavior of the method, as it exhibits a quadratic convergence rate, while the theoretical prediction of Theorem \ref{th:converg} gives a convergence rate of order $5/2$.
This is most likely due to the particularly simple structure of the problem under consideration. However, in the general case we do not expect a better convergence rate than $5/2$, even when the analytical solution is smooth. As before, no significant influence of the logarithmic term in estimate \eqref{eq:err_est_quad} appears in actual computations.

Finally, both the schemes experience no deterioration of the solution for large values of $\lambda$, confirming the robustness of the method with respect to the incompressibility parameter (no volumetric locking effect occurs).
\begin{figure}[htbp]
    \centering
    \begin{subfigure}[t]{0.48\textwidth}
        \centering
        \includegraphics[width=\linewidth]{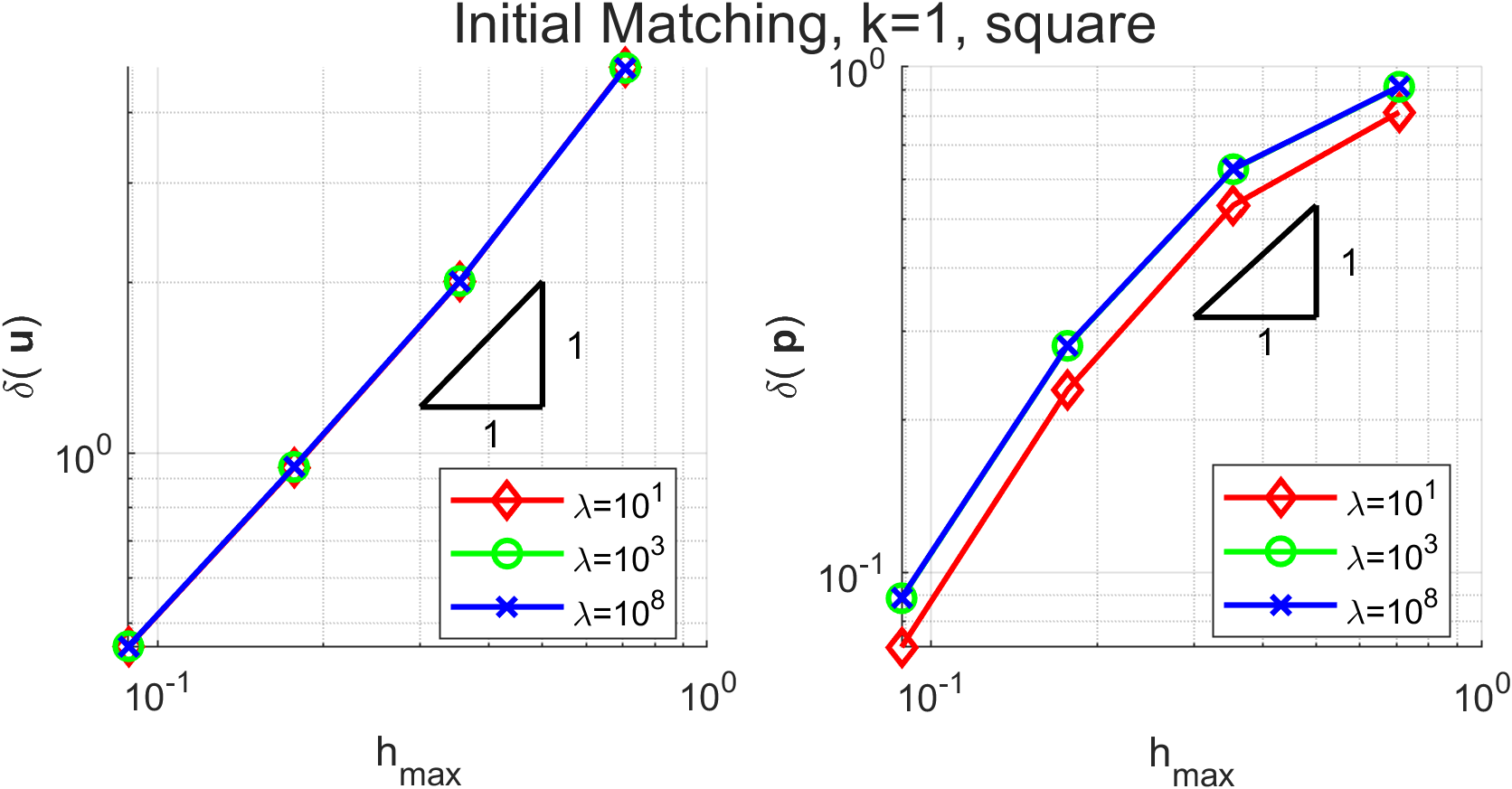}
    \end{subfigure}
    \hfill
    \begin{subfigure}[t]{0.48\textwidth}
        \centering
        \includegraphics[width=\linewidth]{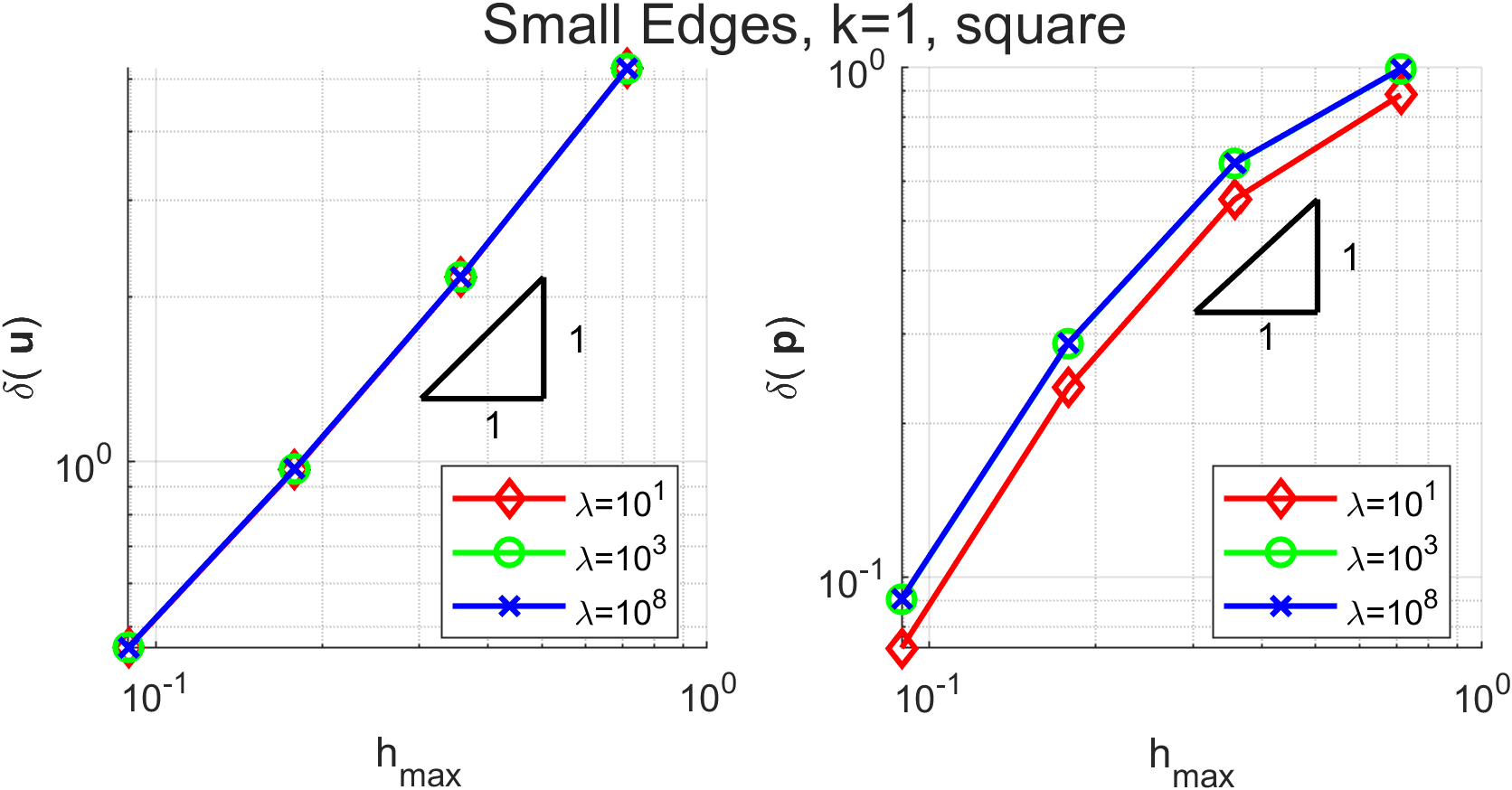}
    \end{subfigure}
    \caption{Test with trigonometric displacement solution (linear element): Behavior of $\delta({\bf u})$ and $\delta({\bf p})$ for the sequence of meshes $\mathcal{Q}_h$.}
  \label{fig:square_div_free_convergence-deg1}
\end{figure}

\begin{figure}[htbp]
    \centering
    \begin{subfigure}[t]{0.48\textwidth}
        \centering
        \includegraphics[width=\linewidth]{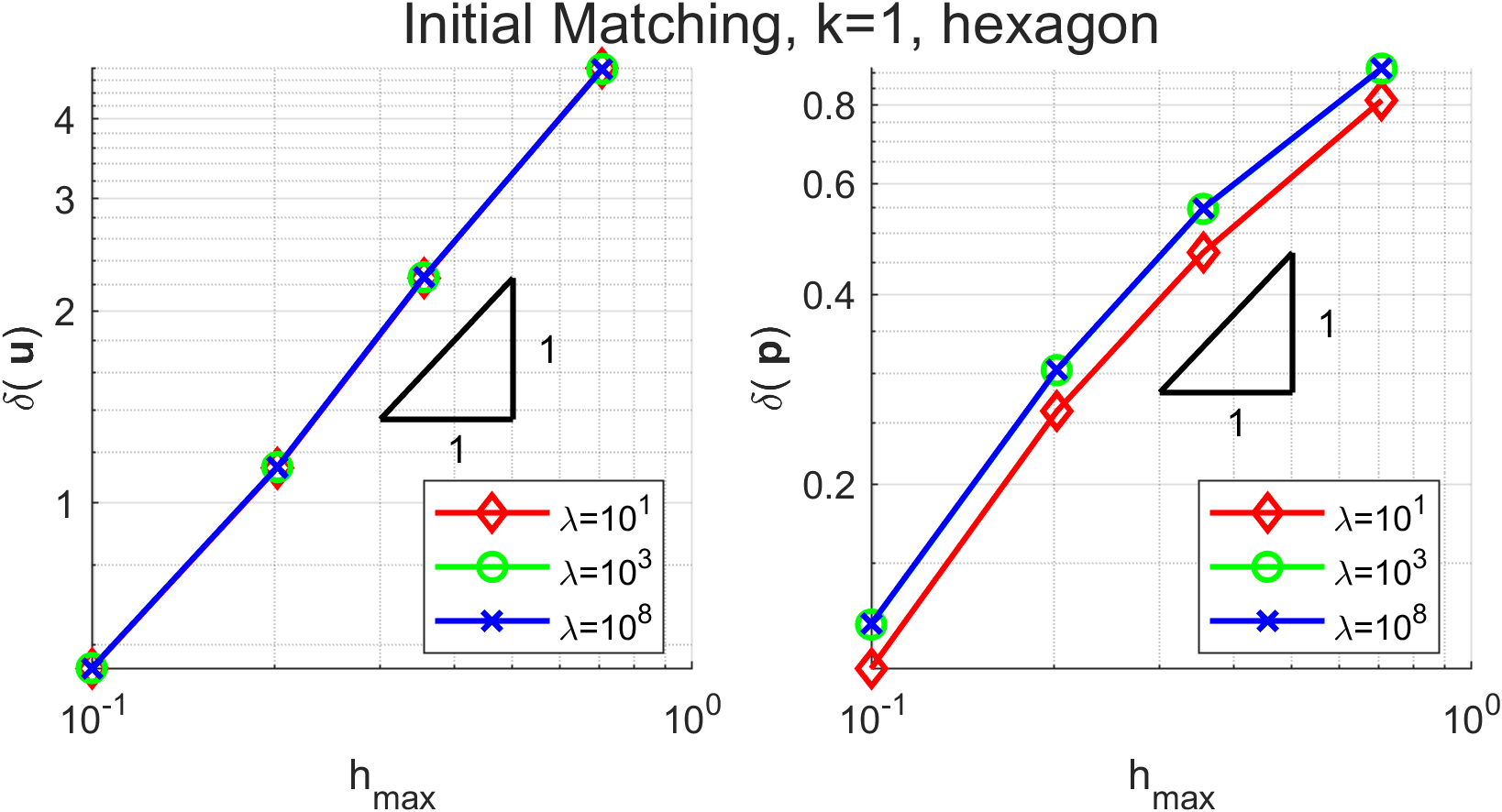}
    \end{subfigure}
    \hfill
    \begin{subfigure}[t]{0.48\textwidth}
        \centering
        \includegraphics[width=\linewidth]{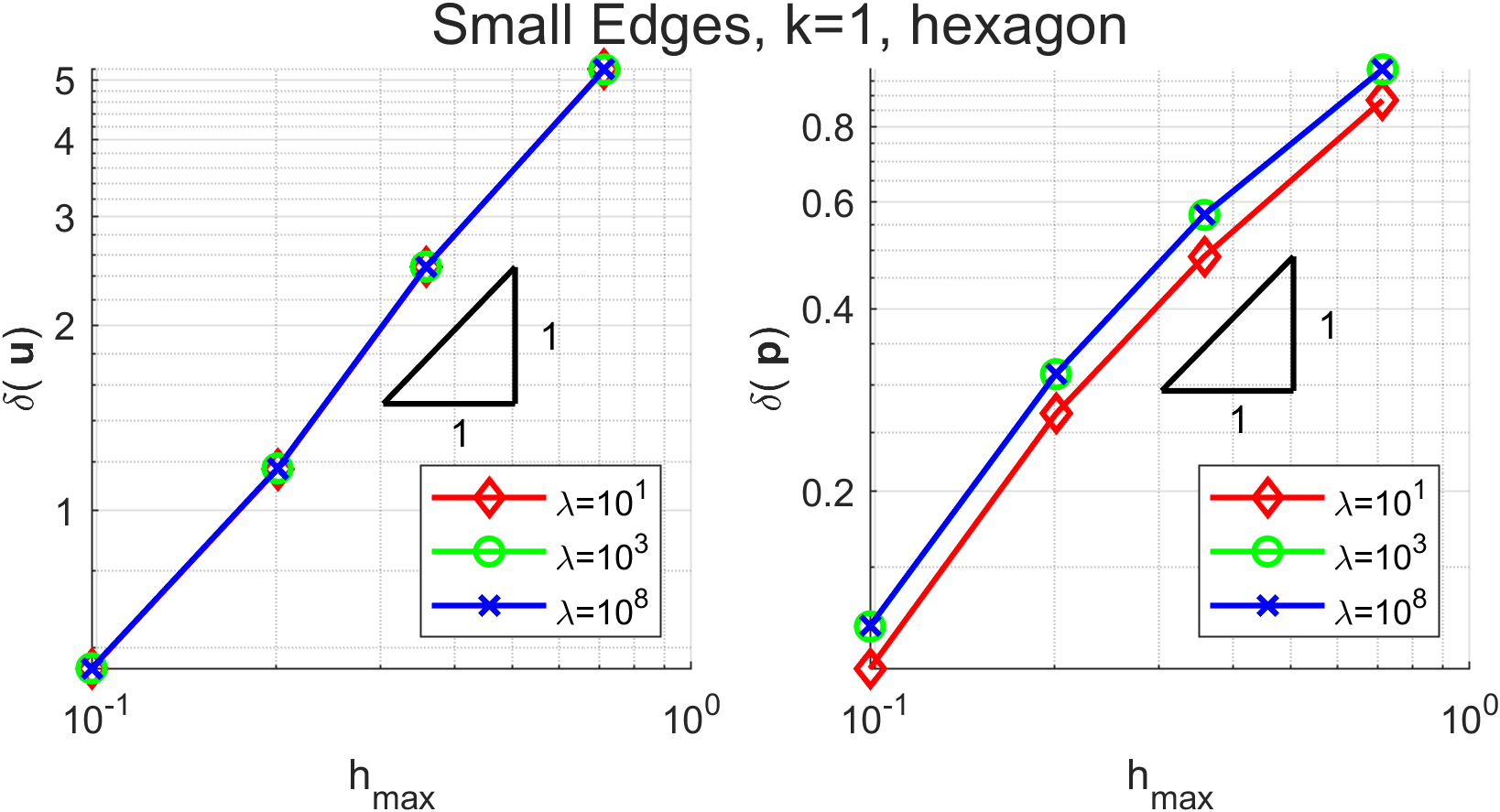}
    \end{subfigure}
    \caption{Test with trigonometric displacement solution (linear element): Behavior of $\delta({\bf u})$ and $\delta({\bf p})$ for the sequence of meshes $\mathcal{H}_h$.}
  \label{fig:hexagon_div_free_convergence-deg1}
\end{figure}

\begin{figure}[htbp]
    \centering
    \begin{subfigure}[t]{0.48\textwidth}
        \centering
        \includegraphics[width=\linewidth]{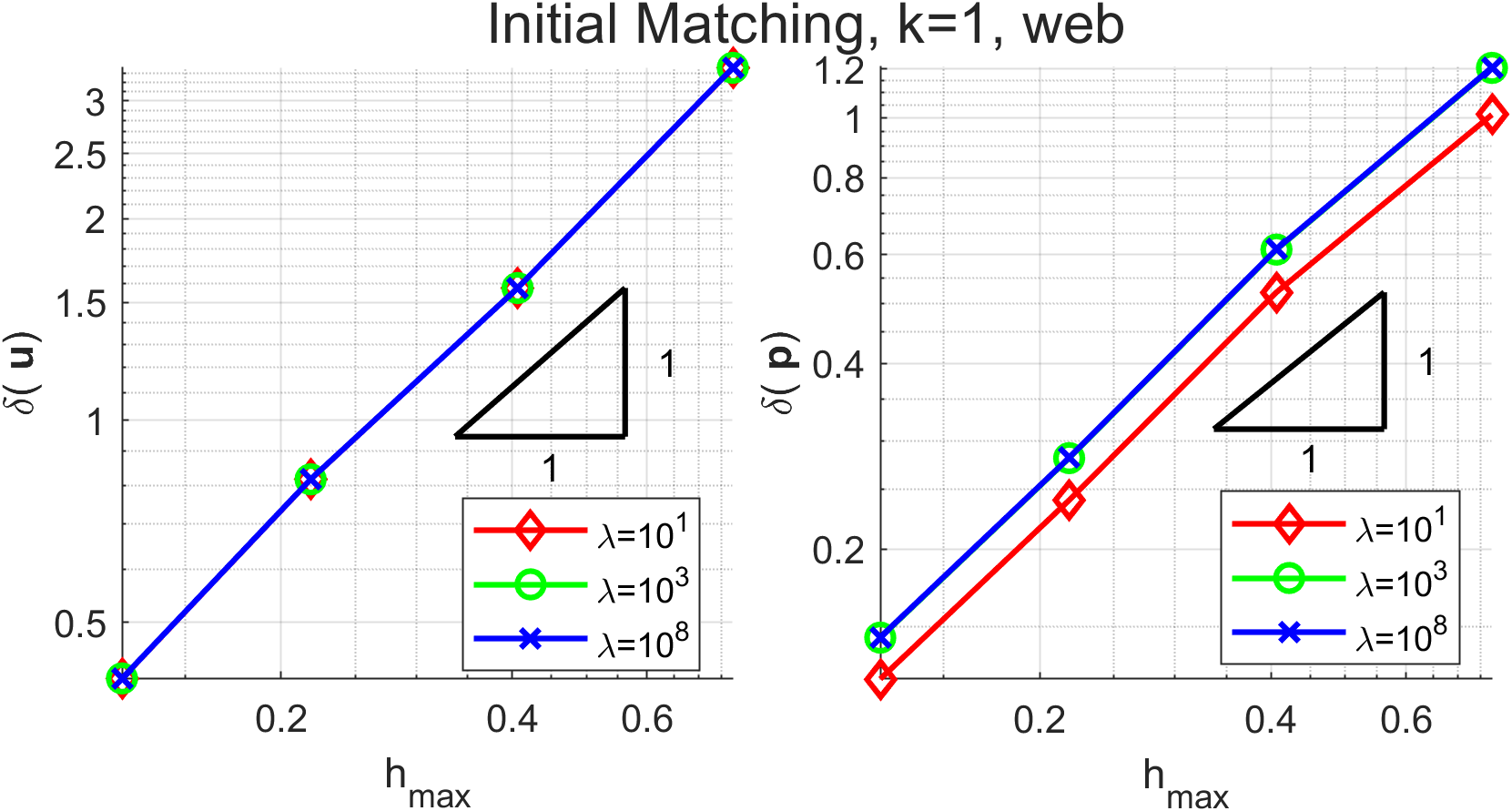}
    \end{subfigure}
    \hfill
    \begin{subfigure}[t]{0.48\textwidth}
        \centering
        \includegraphics[width=\linewidth]{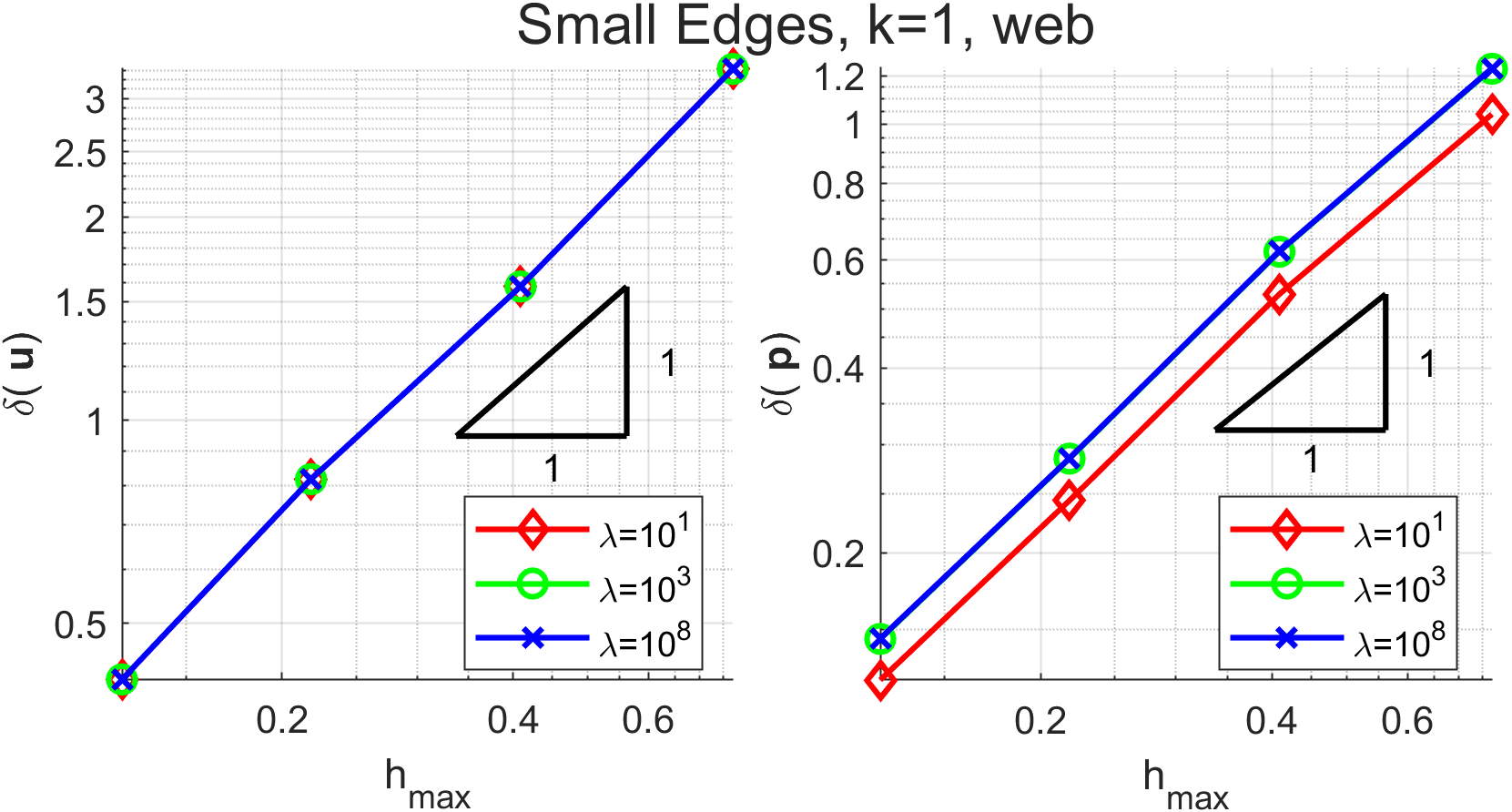}
    \end{subfigure}
     \caption{Test with trigonometric displacement solution (linear element): Behavior of $\delta({\bf u})$ and $\delta({\bf p})$ for the sequence of meshes $\mathcal{W}_h$.}
    \label{fig:web_div_free_convergence-deg1}
\end{figure}

\begin{figure}[htbp]
    \centering
    \begin{subfigure}[t]{0.48\textwidth}
        \centering
        \includegraphics[width=\linewidth]{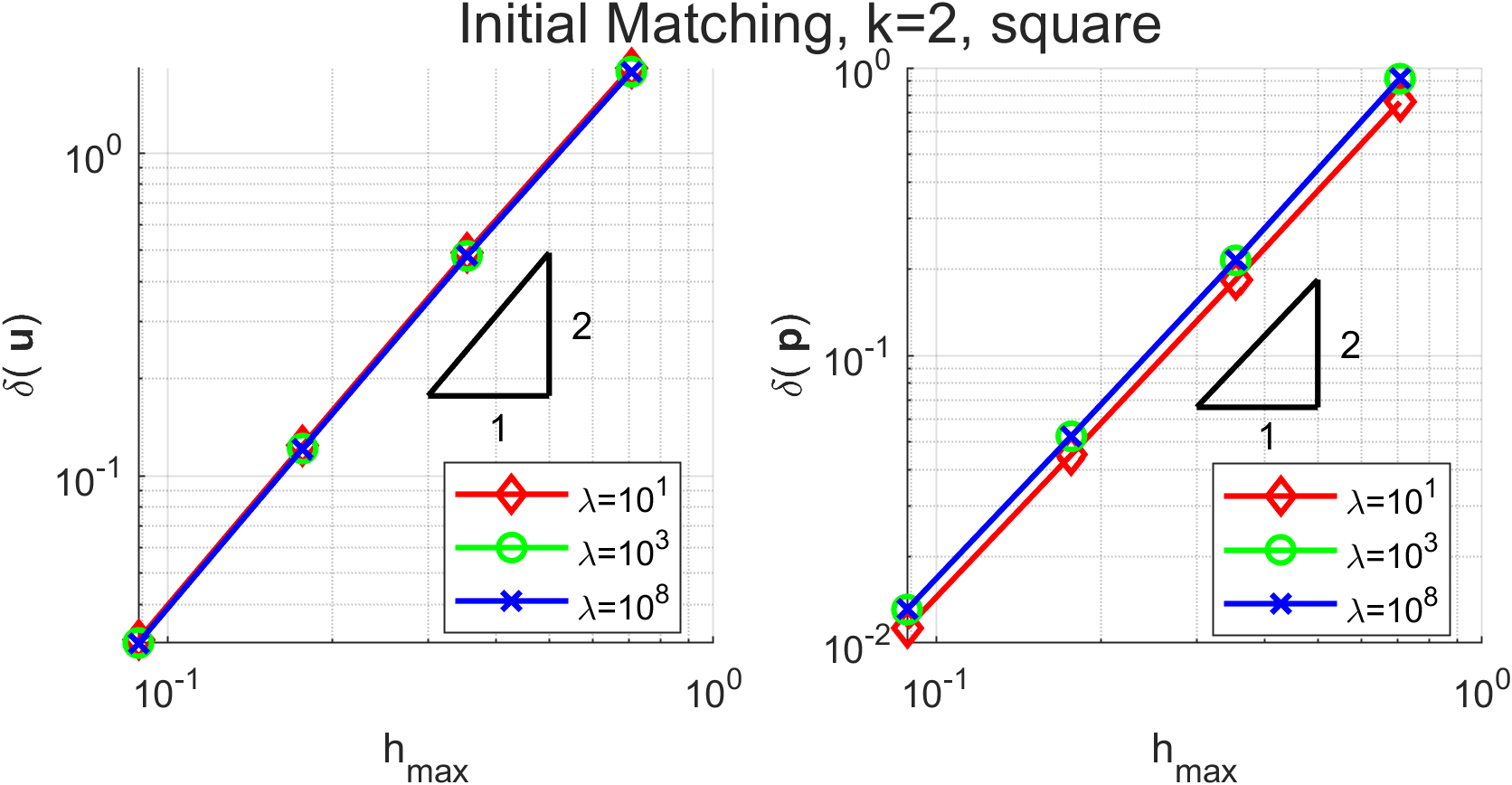}
    \end{subfigure}
    \hfill
    \begin{subfigure}[t]{0.48\textwidth}
        \centering
        \includegraphics[width=\linewidth]{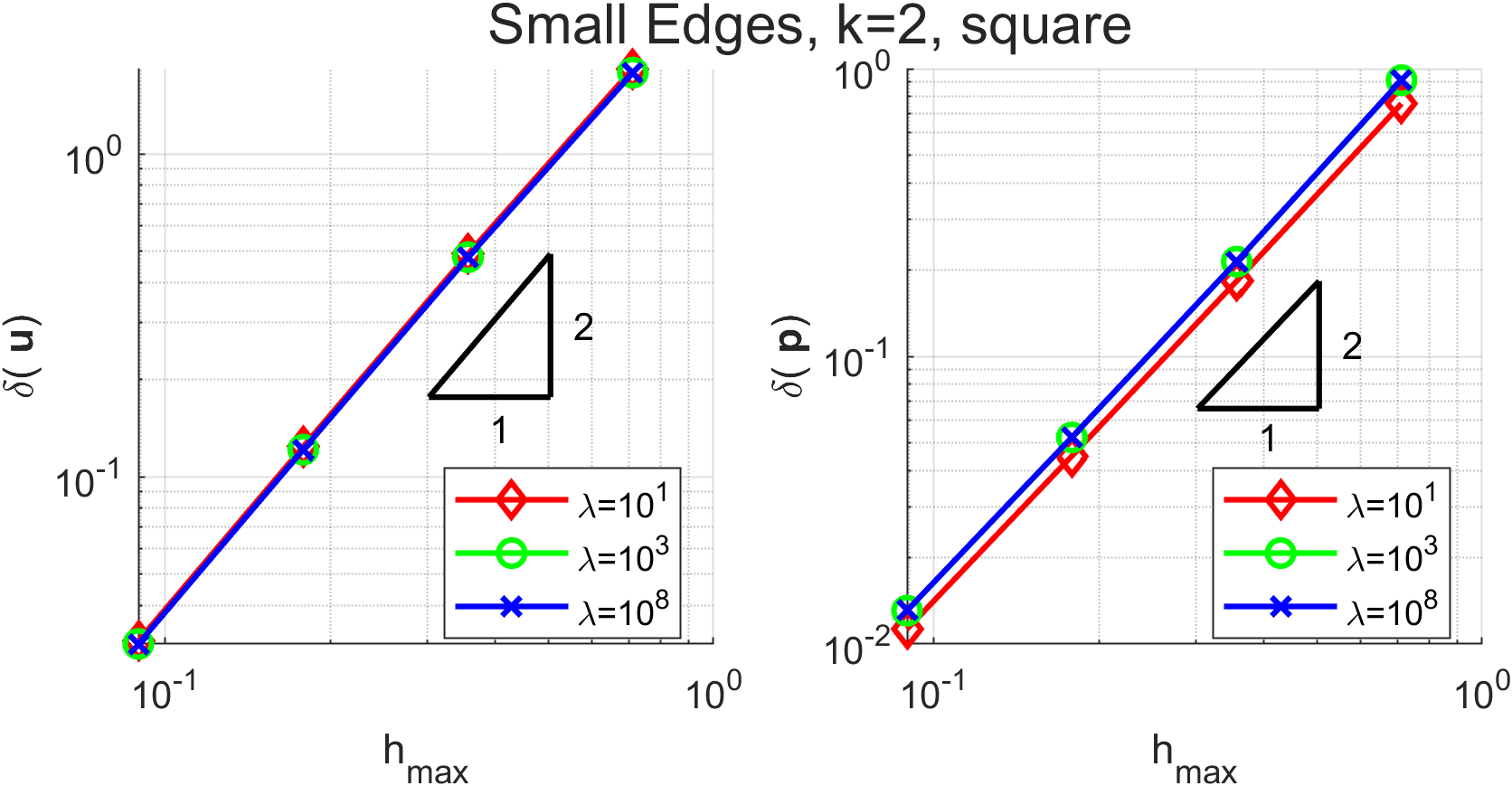}
    \end{subfigure}
    \caption{Test with trigonometric displacement solution (quadratic element): Behavior of $\delta({\bf u})$ and $\delta({\bf p})$ for the sequence of meshes $\mathcal{Q}_h$.}
    \label{fig:square_div_free_convergence-deg2}
\end{figure}

\begin{figure}[htbp]
    \centering
    \begin{subfigure}[t]{0.48\textwidth}
        \centering
        \includegraphics[width=\linewidth]{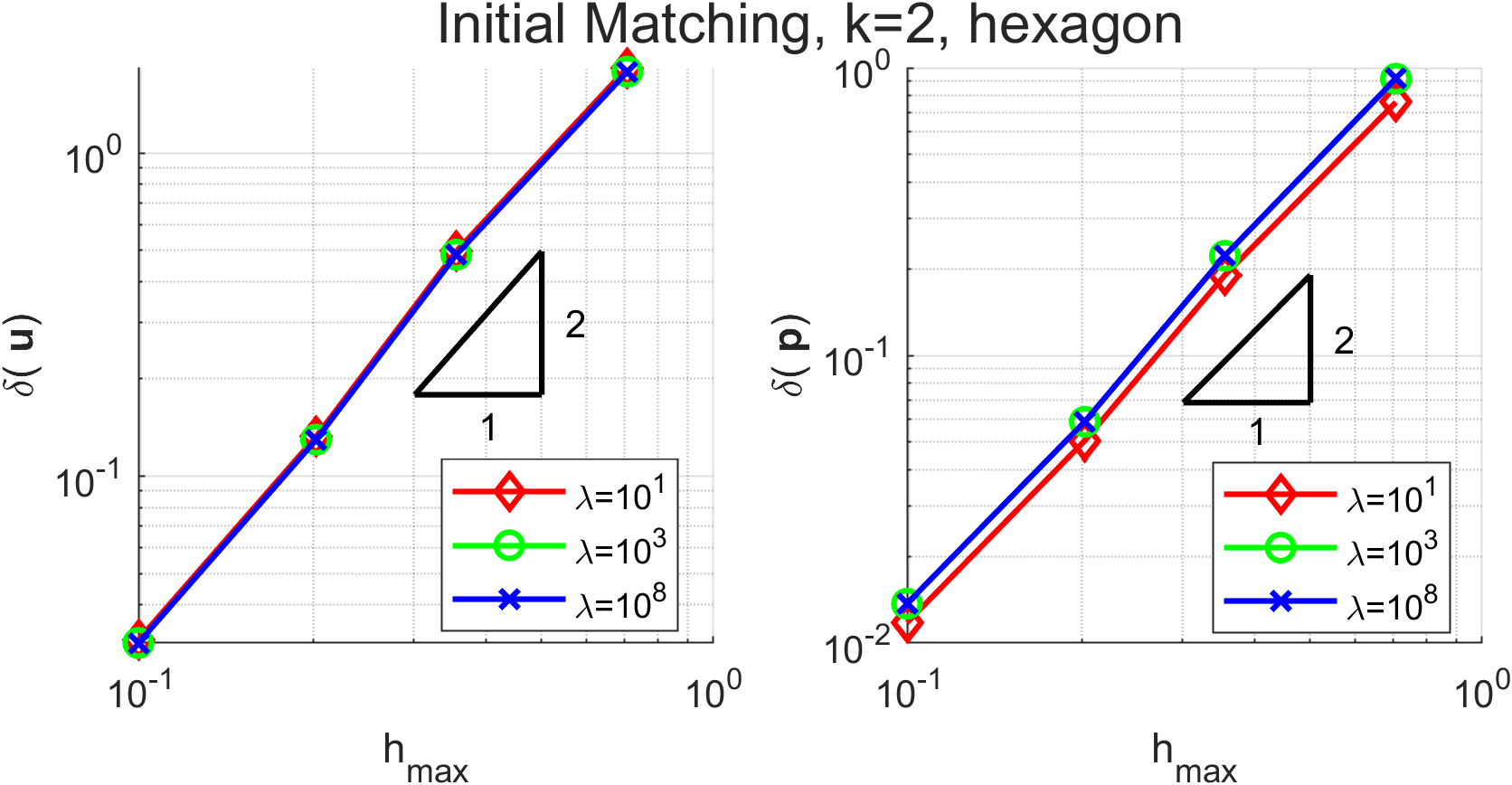}
    \end{subfigure}
    \hfill
    \begin{subfigure}[t]{0.48\textwidth}
        \centering
        \includegraphics[width=\linewidth]{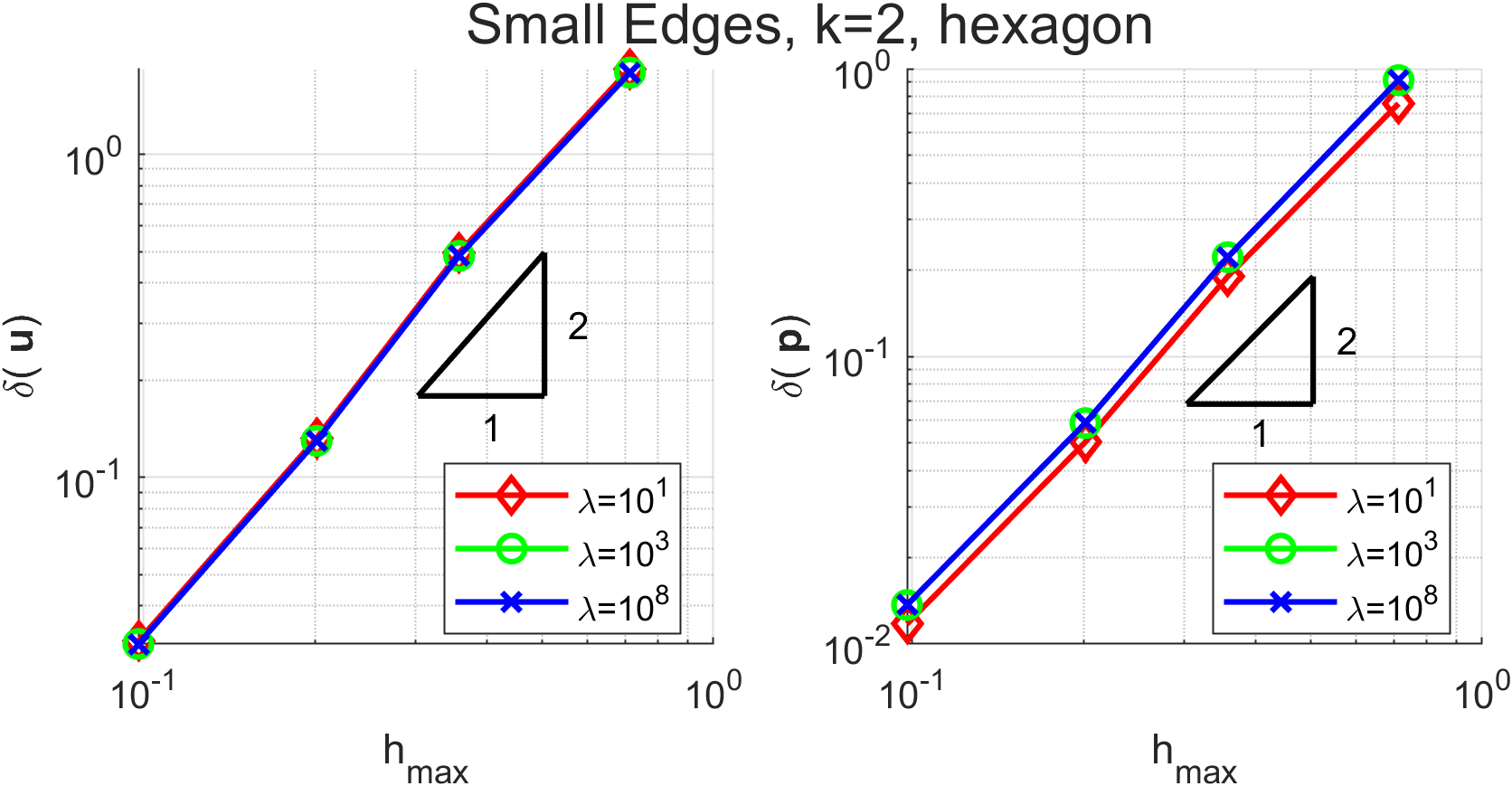}
    \end{subfigure}
    \caption{Test with trigonometric displacement solution (quadratic element): Behavior of $\delta({\bf u})$ and $\delta({\bf p})$ for the sequence of meshes $\mathcal{H}_h$.}
  \label{fig:hexagon_div_free_convergence-deg2}
\end{figure}

\begin{figure}[htbp]
    \centering
    \begin{subfigure}[t]{0.48\textwidth}
        \centering
        \includegraphics[width=\linewidth]{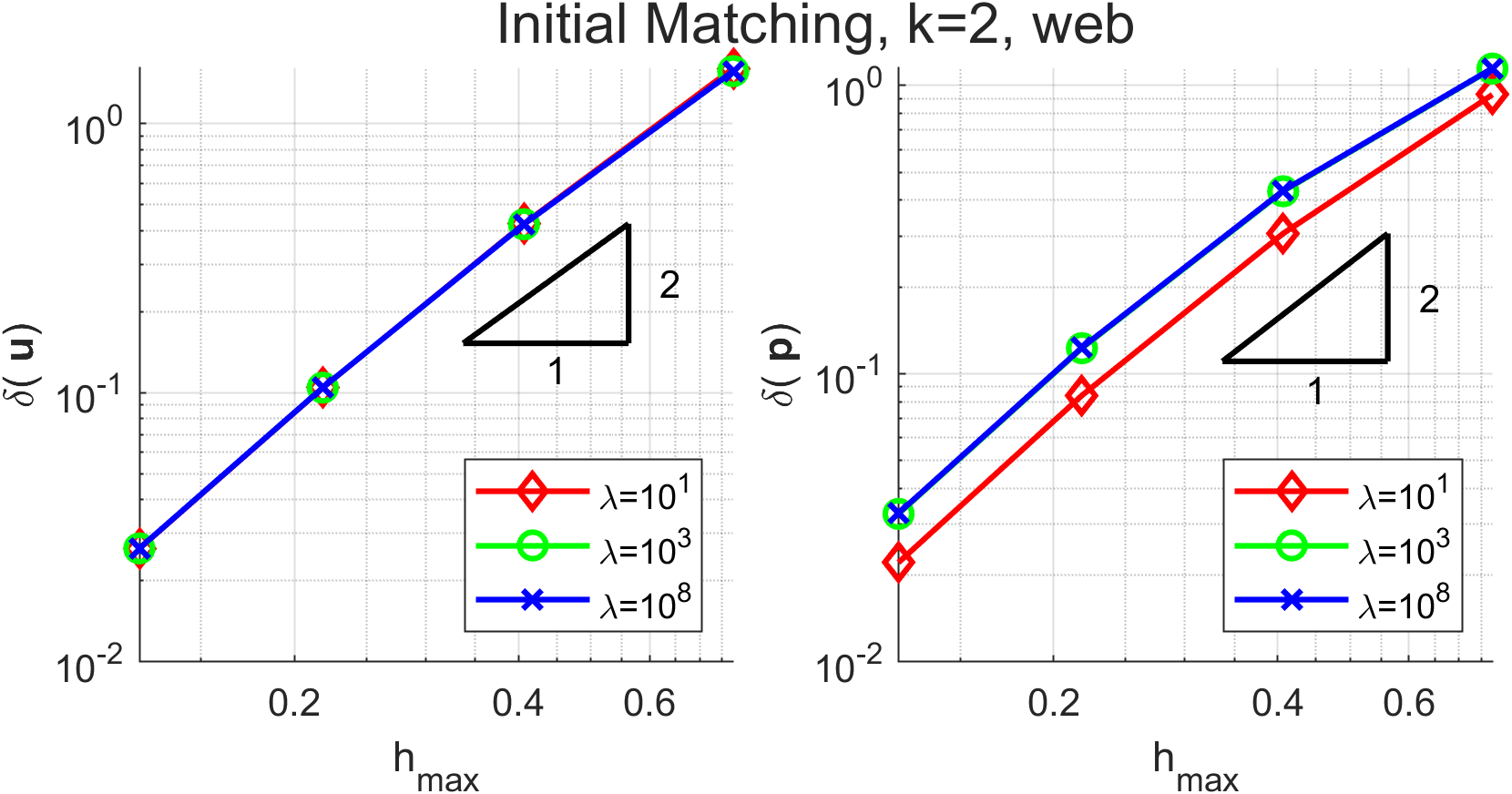}
    \end{subfigure}
    \hfill
    \begin{subfigure}[t]{0.48\textwidth}
        \centering
        \includegraphics[width=\linewidth]{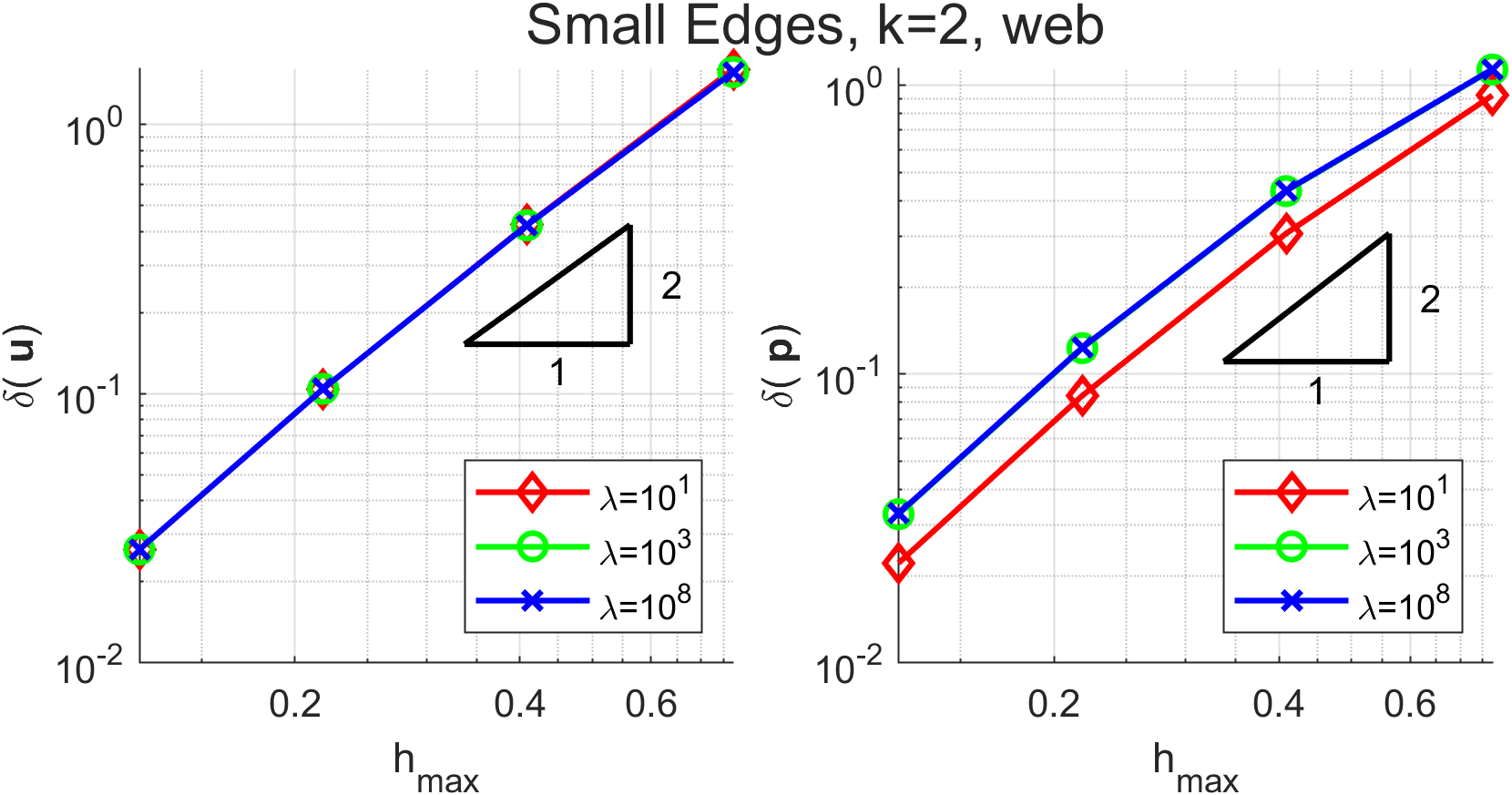}
    \end{subfigure}
     \caption{Test with trigonometric displacement solution (quadratic element): Behavior of $\delta({\bf u})$ and $\delta({\bf p})$ for the sequence of meshes $\mathcal{W}_h$.}
    \label{fig:web_div_free_convergence-deg2}
\end{figure}

\subsection{Hertz problem}\label{ss:hertz}
The last example is the Hertzian contact problem. In the reference configuration, we consider an elastic half-disk ($R=0.5$) initially in contact with a square block ($L=1$) at the origin of the reference system, as shown in Figure \ref{fig:initial_deformed_configurations}.
\begin{figure}[htbp]
  \centering
  \includegraphics[width=1.0\linewidth]{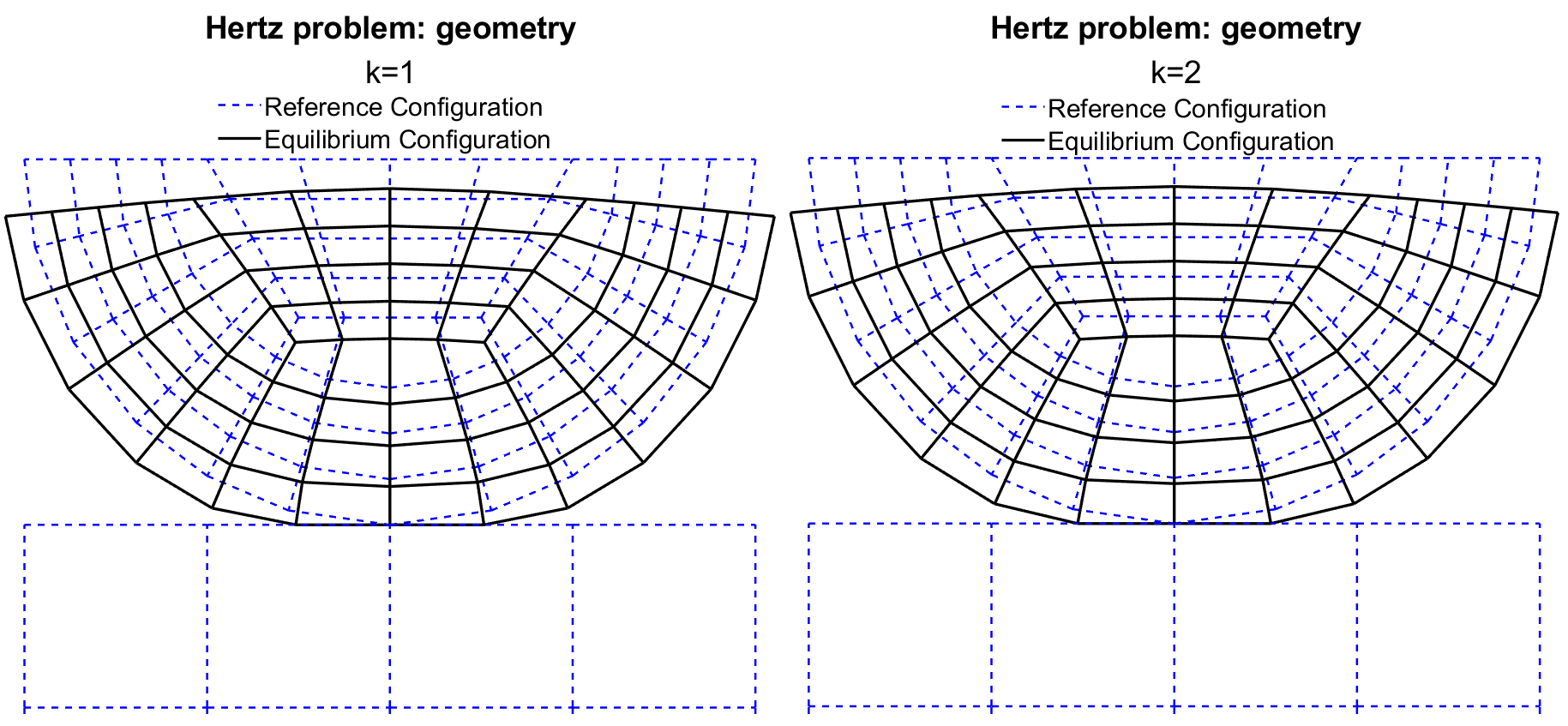}
  \caption{Reference and equilibrium configurations.}
  \label{fig:initial_deformed_configurations}
\end{figure}
The disk is subjected to an external load $F=2.5$, applied as uniform pressure $p$ on its upper surface. Young's modulus and Poisson's ratio are $E_{square} = 70000$, $\nu_{square} = 0.4999$, $E_{disk}=70$ and $\nu_{disk} = 0.3$. Notice that these values of the elastic parameters essentially correspond to considering the square as a rigid body. Unlike the preceding tests, a non-trivial initial gap function $g_0$ is set.
For the Hertzian problem the following expression for the contact pressure $p_n$ is analytically available in terms of the half-contact area $b$ and the effective material parameter $E^\star$. In fact, we have (e.g., see \cite{Johnson}):
\begin{equation*} 
    p_n = \frac{4Rp}{\pi b^2} \sqrt{b^2-x^2},
\end{equation*}
where
\begin{equation*}
    b = 2\sqrt{\frac{2R^2p}{\pi E^{\star}}}, \qquad \frac{1}{E^\star} = \frac{1-\nu_{square}^2}{E_{square}} + \frac{1-\nu_{disk}^2}{E_{disk}}.
\end{equation*}
Numerical tests are performed using quadrilateral meshes, an example of which is shown in Figure \ref{fig:initial_deformed_configurations}. The square body is discretized with a fixed mesh of $16$ elements, while the half-disk is decomposed using progressively finer meshes, up to $256$ elements.

The discrete normal stress over the contact region is plotted in Figure \ref{fig:pressure_distribution}, for both linear ($k=1$) and quadratic ($k=2$) VEM schemes. By evaluating the virtual element solutions at the contact element vertices, we observe that they are able to reproduce the behavior of the quantities analytically available, achieving increasingly higher accuracy as the mesh is refined.

\begin{figure}[htbp]
  \centering
  \includegraphics[width=1.0\linewidth]{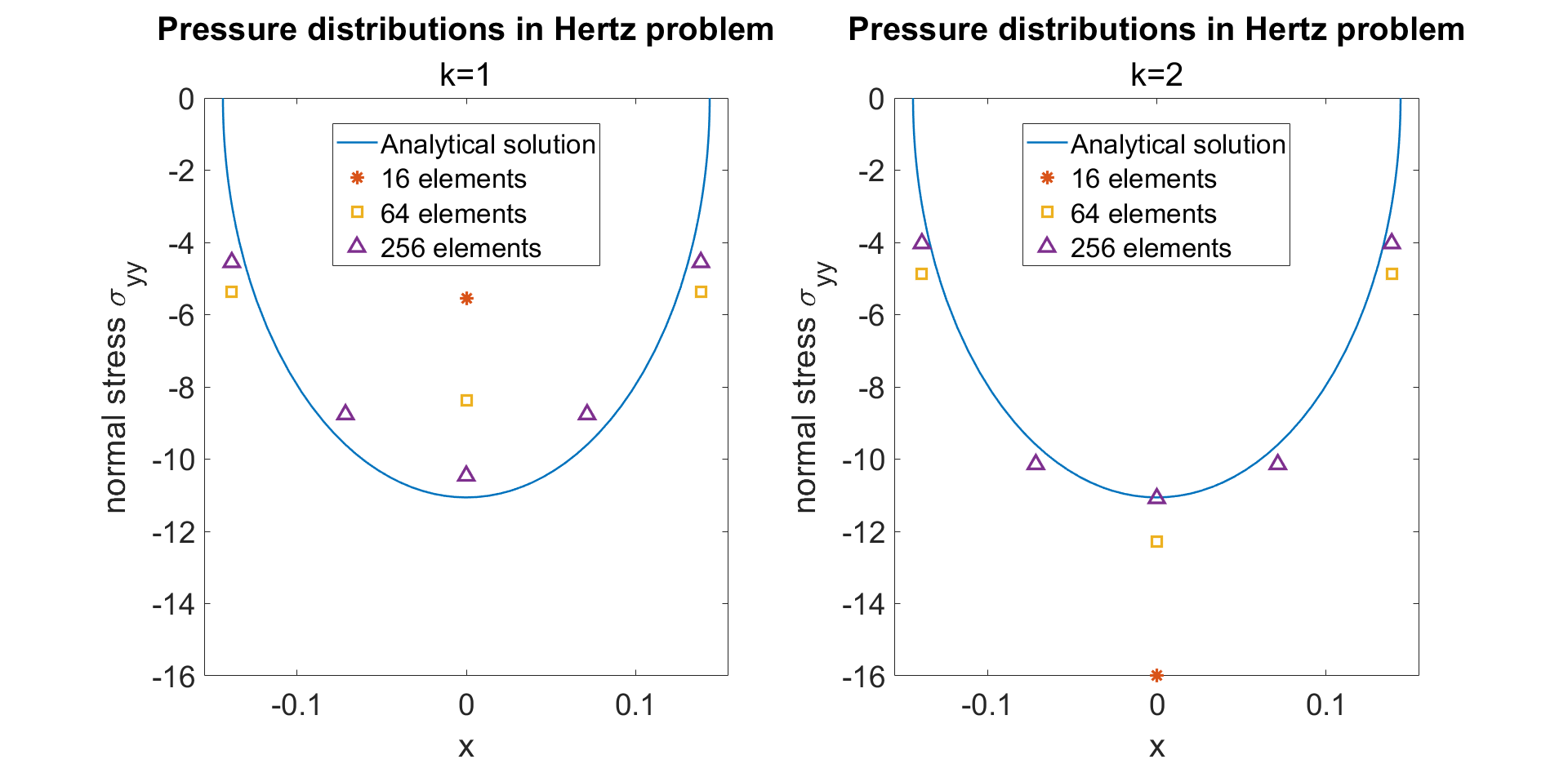}
  \caption{Contact pressure distribution in the Hertz problem: linear (on the left) and quadratic (on the right) VEM approximation.}
  \label{fig:pressure_distribution}
\end{figure}

Finally, in Figure \ref{fig:max_contact_pressure} we display the maximum contact pressure compared to the analytical solution (for the selected parameters: $p_{n,max} = -11.06$); also here, both linear and quadratic schemes are considered.

\begin{figure}[htbp]
  \centering
  \includegraphics[width=1.0\linewidth]{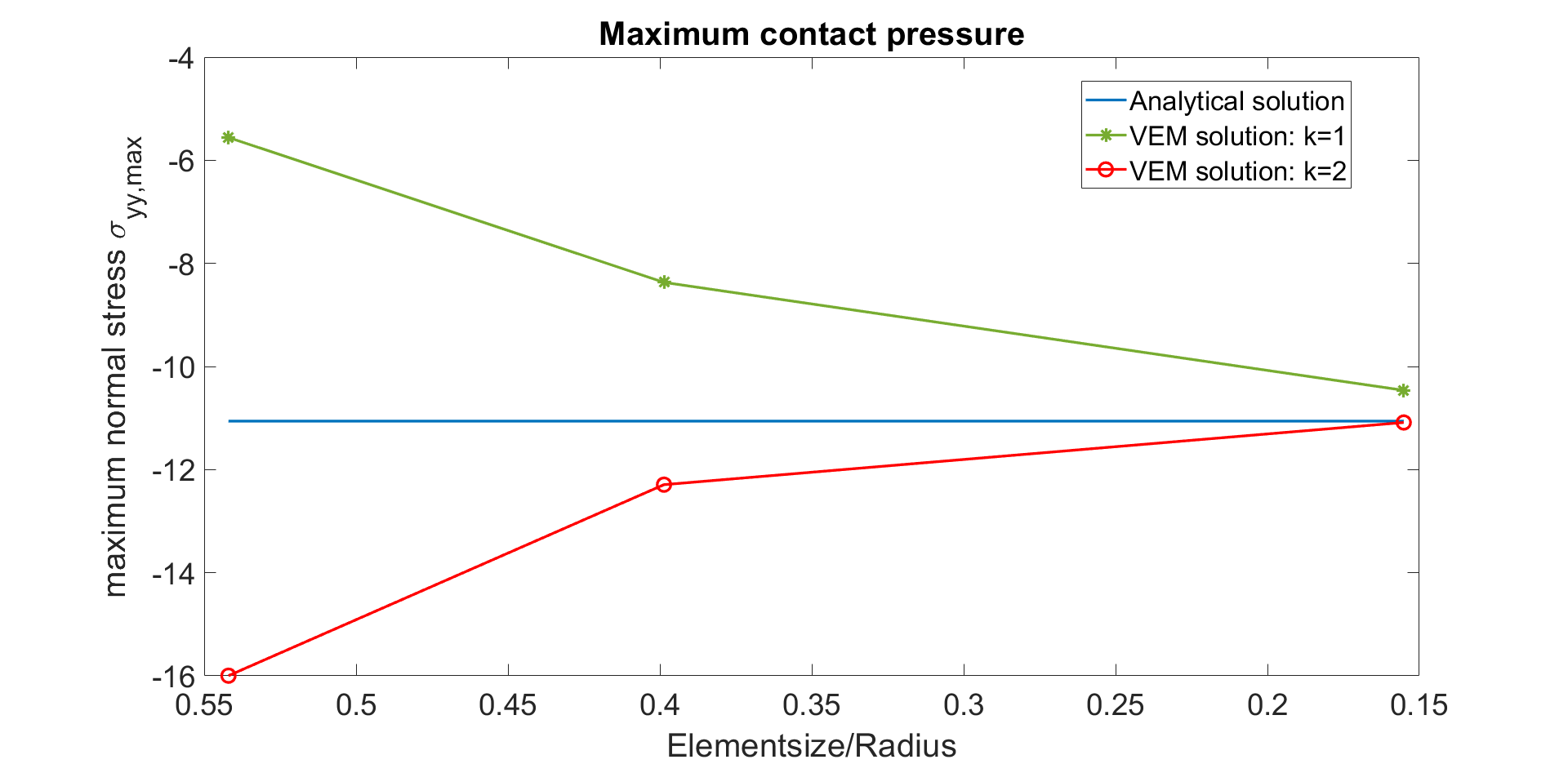}
  \caption{Maximum contact pressure in the Hertz problem: linear and quadratic VEM approximation.}
  \label{fig:max_contact_pressure}
\end{figure}
\section{Conclusions}\label{sec:conclusions}
In this work, we have developed and analysed mixed Virtual Element Methods (VEMs) for the numerical approximation of two-dimensional frictionless contact problems in elasticity, with a particular focus on nearly incompressible materials. By adopting a mixed displacement/pressure formulation, the proposed schemes effectively overcome the volumetric locking phenomenon that typically affects standard displacement-based methods.
As examples of application of the theory, we have considered a couple of schemes; for one of the two we have provided a detailed error analysis, but very similar tools can be used to study the other method. Special attention has been given to the influence of \say{small edges} in the mesh.

Numerical experiments confirm the theoretical predictions, demonstrating:
\begin{itemize}
    
    \item the robustness of the methods with respect to the volumetric parameter, with no evidence of locking in the nearly incompressible regime, even for extremely large value of the first Lam\'e parameter;
    
    \item the good performance and stability in the presence of \say{small edges}, with no significant deterioration observed in practical computations;
    
    \item the achievement of the expected convergence rates, with the second-order scheme exhibiting super-convergent behaviour in some test cases.
    
\end{itemize}

Overall, the results indicate that the proposed mixed VEM schemes provide a flexible and reliable tool for the simulation of contact problems in elasticity, including the case of nearly incompressible materials. Future work may address the extension to three-dimensional problems, the inclusion of frictional contact, and the design of different discretization spaces and VEM stabilization terms.

\section*{Acknowledgements}
This research was funded in part by INdAM-GNCS.
\addcontentsline{toc}{section}{\refname}
\bibliographystyle{plain}
\bibliography{biblio}
\end{document}